\newtheorem{theorem}{Theorem}[section]
\newtheorem{lemma}[theorem]{Lemma}
\theoremstyle{definition}
\newtheorem{definition}[theorem]{Definition}
\newtheorem{corollary}[theorem]{Corollary}
\theoremstyle{remark}
\newtheorem{remark}[theorem]{Remark}
\newcommand{\Norm}[1]{\left\lVert#1\right\rVert}
\newcommand{\abs}[1]{\left\lvert#1\right\rvert}
\newcommand{\pa}[1]{\left( #1 \right)}
\newcommand{\br}[1]{\left\lbrace #1\right\rbrace}
\newcommand{\spa}[2]{\mathbb{#1}^{#2}}
\newcommand{\kac}[0]{\mathbb{S}^{N-1}\left(\sqrt{N}\right)}
\newcommand{\R}{\mathbb{R}}
\newcommand{\N}{\mathbb{N}}
\numberwithin{equation}{section}
\author{Amit Einav}
\title{On The Subadditivity Of The Entropy On The Sphere}
\thanks{The author was supported by EPSRC grant EP/L002302/1}
\begin{document}

\maketitle
\begin{abstract}
We present a refinement of a known entropic inequality on the sphere, finding suitable conditions under which the uniform probability measure on the sphere behaves asymptomatically like the Gaussian measure on $\R^N$ with respect to the entropy. Additionally, we remark about the connection between this inequality and a the investigation of the many body Cercignani's conjecture.
\end{abstract}

\section{Introduction.}\label{sec: intro}
A fundamental principle in equilibrium statistical mechanics is that of the equivalence of ensembles. In mathematical terms, this principle states that the uniform measure on $\kac$, $d\sigma^N$, considered as a measure on $\R^N$ supported on the sphere, is close in behaviour to the Gaussian measure
\begin{equation}\nonumber
d\gamma_N(v)=\frac{e^{-\frac{\abs{v}^2}{2}}}{\pa{2\pi}^{\frac{N}{2}}}dv
\end{equation}
 when $N$ is very large. In this setting the uniform measure $d\sigma^N$ corresponds to the micro-canonical ensemble, representing a fixed number of particles with a fixed total energy, while the Gaussian measure $d\gamma_N$ corresponds to the canonical ensemble, representing a fixed number of particle in thermal equilibrium. For simple systems, the equivalence of ensembles principle means that for any finitely many number of particles with velocities $v_1,\dots,v_k$, $k\in\N$, and any observable function of those particles, $\phi\pa{v_1,\dots,v_k}$, the measurement of $\phi$ in the micro-canonical and canonical settings yields almost identical results, with a difference that converges to zero as the number of particles goes to infinity. In other words:
\begin{equation}\nonumber
\lim_{N\rightarrow\infty}\pa{\int_{\kac}\phi\pa{v_1,\dots,v_k}d\sigma^N - \int_{\R^N}\phi\pa{v_1,\dots,v_k}d\gamma_N}=0.
\end{equation}  
An acute difference between $d\sigma^N$ and $d\gamma_N$ may arise when one deals with quantities that depends on \emph{all} the particles in the ensemble, such as the case of the entropy, or more generally - the relative entropy, in non-equilibrium statistic mechanics. Such a deviation from the equivalence of ensembles principle was observed in \cite{CLL}, and will be described shortly. \\
We denote by $P\pa{X}$ the set of Borel probability measures on a Polish space X. Any measure in this current work will be assumed to be a Borel measure.
\begin{definition}\label{def: relative entropy}
Let $\mu,\nu \in P\pa{\R^d}$. The relative entropy of $\mu$ with respect to $\nu$ is defined as
\begin{equation}\nonumber
H\pa{\mu | \nu} =\begin{cases} \int_{\R^d} h\log h d\nu \quad h=\frac{d\mu}{d\nu} \\
\infty \quad\quad\quad\quad\quad \text{otherwise}.
\end{cases}
\end{equation}
\end{definition}
Note that we have not indicated the dimension of the underlying space in the notation of the relative entropy. It will be implicitly evident in all our discussions to follow.
\begin{definition}\label{def: entropy on the sphere}
Let $\mu\in P\pa{\kac}$ be absolutely continuous with respect to $d\sigma^N$ with a probability density function $F_N$. We denote by
\begin{equation}\nonumber
H_N\pa{F_N}=H\pa{F_N d\sigma^N | d\sigma^N}=H\pa{\mu | \sigma^N}.
\end{equation}
\end{definition}
Of special import to our work is the concept of marginals, and in particular first marginals.
\begin{definition}\label{def: k-th marginal}
Given $\mu\in P\pa{\R^d}$ we define its $k-$th marginal in the $\pa{i_1,\dots,i_k}-$th variables as the probability measure $\Pi_k^{\pa{i_1,\dots,i_k}}\pa{\mu}$ on $\R^k$ satisfying
\begin{equation}\label{eq: first marginal in jth variable}
\Pi_k^{\pa{i_1,\dots,i_k}}\pa{\mu}\pa{A_{1} \times \dots \times A_{k}}=\mu\pa{A^{\pa{i_1,\dots,i_k}}},
\end{equation}
where $A^{\pa{i_1,\dots,i_k}}=\widetilde{A_1}\times \dots \times \widetilde{A_N}$ with $\widetilde{A_j}=\begin{cases} A_l & j=i_l, l=1,\dots,k \\ \R & j\not=i_1,\dots,i_k \end{cases}$. 
\end{definition}
It is important to note that even if a probability measure, $\mu$, is supported on $\kac$, its $k-$th marginal is well defined on $\R^k$ whenever $k\leq N-1$ and is supported in the ball of radius $\sqrt{N}$ centred at the origin. Moreover, the $k-$th marginal in the $\pa{i_1,\dots,i_k}-$th variables of $\mu$ is absolutely continuous with respect to the Lebesgue measure on $\R^k$. We will denote by $\Pi_k^{\pa{i_1,\dots,i_k}}\pa{F_N}$ the probability density function of $\Pi_k^{\pa{i_1,\dots,i_k}}\pa{\mu}$.\\
 In what follows, whenever a measure $\mu$ will have a probability density (with respect to the Lebesgue or the uniform measure), $f$, we will use it interchangebly with $\mu$ in all our relevant quantities. For instance, writing $f\in P\pa{\R}$ or $H(f|g)$ will be an abusive notation to saying that the measure $\mu$ with density $f$ is in $P\pa{\R}$ or to computing the relative entropy of $d\mu(v)=f(v)dv$ with respect to the measure $g(v)dv$. \\
 We are now prepared to discuss the deviation from the equivalence of equilibrium principle, previously mentioned. It is simple to show (see the Appendix) that given $\mu\in P\pa{\R^N}$ such that $d\mu=F_N dv$, with $F_N$ having a finite second moment, one has that
\begin{equation}\label{eq: entropic inequality on R}
\sum_{j=1}^N H\pa{\Pi_1^{\pa{j}}\pa{F_N} | \gamma} \leq H\pa{F_N | \gamma_N},
\end{equation} 
where $\gamma=\gamma_1$. Trying to generalise (\ref{eq: entropic inequality on R}) one can define an appropriate first marginal on the sphere whenever $F_N$ is a probability density function on $\kac$ by
\begin{equation}\label{eq: first marginal in jth variable on the sphere}
F_j^{\pa{N}}(v)=\int_{\spa{S}{N-2}\pa{\sqrt{N-v^2}}}F_N d\sigma^{N-1}_{\sqrt{N-v^2}},
\end{equation}
where $d\sigma^k_r$ is the uniform probability measure on $\spa{S}{k-1}\pa{r}$. The expectation that (\ref{eq: entropic inequality on R}) will be approximately true on the sphere is false in general. It was proven in \cite{CLL} that
\begin{theorem}\label{thm: entropic inequality on the sphere bad}
Let $F_N\in P\pa{\kac}$. Then
\begin{equation}\label{eq: entropic inequality on the sphere}
\sum_{i=1}^N \int_{\spa{S}{N-1}\pa{\sqrt{N}}}F^{\pa{N}}_j\log F^{\pa{N}}_j d\sigma^N \leq 2 H_N\pa{F_N},
\end{equation}
and the constant $2$ is sharp.
\end{theorem}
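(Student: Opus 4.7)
The plan is to rewrite the left-hand side in terms of standard relative entropies, compare $F_N$ against a product-of-marginals reference measure on the sphere, and thereby reduce the target inequality to a single scalar estimate.

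\emph{Step 1: Reformulation.} Slicing $\kac$ orthogonally to the $j$-th coordinate axis shows that the first marginal of $d\sigma^N$ has Lebesgue density
\begin{equation}\nonumber
\rho_N(v) = c_N \left(1 - \tfrac{v^2}{N}\right)_+^{(N-3)/2}, \qquad v \in [-\sqrt{N},\sqrt{N}],
\end{equation}
for an explicit constant $c_N$. Hence $\int_{\kac} g(v_j) d\sigma^N = \int g(v) \rho_N(v) dv$ whenever $g$ depends only on the $j$-th coordinate. Writing $f_j := F_j^{(N)} \rho_N$ for the Lebesgue density of the $j$-th marginal of $F_N d\sigma^N$, each term on the LHS of (\ref{eq: entropic inequality on the sphere}) equals $\int F_j^{(N)} \log F_j^{(N)} \rho_N dv = H(f_j | \rho_N)$, so the theorem is equivalent to $\sum_j H(f_j | \rho_N) \leq 2 H_N(F_N)$.

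\emph{Step 2: Reduction to a scalar estimate.} The definition (\ref{eq: first marginal in jth variable on the sphere}) identifies $F_j^{(N)}(v_j)$ as the conditional expectation $E_{d\sigma^N}[F_N \mid v_j]$, whence the tower property gives $\int F_N \log F_j^{(N)}(v_j) d\sigma^N = H(f_j | \rho_N)$. Setting $Z := \int_{\kac} \prod_{j=1}^N F_j^{(N)}(v_j) d\sigma^N$ and $G_N := Z^{-1} \prod_j F_j^{(N)}$, the non-negativity of $H(F_N d\sigma^N \,|\, G_N d\sigma^N)$ yields
\begin{equation}\nonumber
\sum_{j=1}^N H(f_j | \rho_N) \leq H_N(F_N) + \log Z.
\end{equation}
Consequently the theorem reduces to the key estimate
\begin{equation}\label{eq: key estimate sphere}
\log \int_{\kac} \prod_{j=1}^N F_j^{(N)}(v_j) \, d\sigma^N \leq H_N(F_N).
\end{equation}

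\emph{Step 3: The main obstacle.} Proving (\ref{eq: key estimate sphere}) is the heart of the argument. A crude application of Jensen's inequality in (\ref{eq: first marginal in jth variable on the sphere}) gives $F_j^{(N)} \log F_j^{(N)} \leq \int F_N \log F_N \, d\sigma^{N-1}_{\sqrt{N-v_j^2}}$ for each $j$, which yields only the trivial estimate $\sum_j H(f_j | \rho_N) \leq N H_N(F_N)$. To replace $N$ by $2$, I would exploit the rotational invariance of $d\sigma^N$ in each coordinate plane $(v_i, v_j)$: averaging over rotations in that plane permits one to compare the product $F_i^{(N)}(v_i) F_j^{(N)}(v_j)$ to the two-point marginal of $F_N d\sigma^N$ via a conditional Jensen inequality, and iterating the pairing collapses the $N$-fold product in (\ref{eq: key estimate sphere}) down to a single copy of $F_N$. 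An alternative, perhaps cleaner, route is to extend $F_N$ from $\kac$ to a density $\widetilde{F}_N$ on $\R^N$ by radial modulation against $d\gamma_N$, apply the Euclidean subadditivity (\ref{eq: entropic inequality on R}) to $\widetilde{F}_N$, and convert the resulting bound back into sphere quantities; the factor $2$ is then expected to emerge from the radial boundary terms. Sharpness of the constant $2$ will be checked on the family $F_N(v) \propto e^{a v_1^2}$ on $\kac$, where the ratio of the two sides of (\ref{eq: entropic inequality on the sphere}) tends to $2$ in an appropriate asymptotic regime.
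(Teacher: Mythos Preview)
The paper does not itself prove this theorem; it is quoted from Carlen--Lieb--Loss \cite{CLL}, where the entropy inequality is obtained as the dual of a sharp Young/Brascamp--Lieb inequality on the sphere,
\begin{equation}\nonumber
\int_{\kac}\prod_{j=1}^N g_j(v_j)\,d\sigma^N \le \prod_{j=1}^N \pa{\int_{\kac} g_j(v_j)^2\,d\sigma^N}^{1/2},
\end{equation}
whose proof in \cite{CLL} proceeds via a nonlinear heat-flow monotonicity argument. Your Steps~1 and~2 are correct and recover exactly the standard duality reduction: the factor-$2$ inequality is equivalent to your ``key estimate'' $\log Z\le H_N(F_N)$, which in turn is (up to the same Legendre duality you already used) the Brascamp--Lieb inequality above. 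So the reduction is right, but it leads straight into the hard core of \cite{CLL}, not around it.

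Step~3 is where the gap lies. The rotational-pairing sketch is not a proof: averaging over $SO(2)$ in a coordinate plane does let you compare a product $F_i^{(N)}(v_i)F_j^{(N)}(v_j)$ with a two-variable marginal, but iterating this does not collapse the $N$-fold product to a single $F_N$ without already invoking a two-function Young inequality of the same strength you are trying to prove. The Euclidean-extension route you mention is precisely the machinery developed in Sections~\ref{sec: extension}--\ref{sec: the entropy relation II} of the present paper, and it is used there to obtain the \emph{refinement} $1+\epsilon_N$ under additional moment and Fisher-information hypotheses; it does not produce the universal constant~$2$ for arbitrary $F_N$, because the marginals of the extension $\widetilde{F}_N$ differ from those of $F_N$ and the correction terms (Theorems~\ref{thm: connection between entropies of Pi_1(F_N) and Pi_1(tilde{F_N})} and~\ref{thm: entropy of marginals is close}) are controlled only under those extra assumptions. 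For sharpness, your test family $F_N\propto e^{a v_1^2}$ points in the right direction (mass pushed toward a pole), but the asymptotic computation showing the ratio tends to~$2$ is nontrivial and is carried out in \cite{CLL}; you have not supplied it.
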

The goal of the present work is to find sufficient conditions on the probability density $F_N$ on $\kac$ under which (\ref{eq: entropic inequality on R}) is indeed a good approximation to its the appropriate spherical analogue. The novelty of our approach is to incorporate elements from the theory of optimal transportation towards this goal. We define the quantities we shall use for the sake of completion.
\begin{definition}\label{def: wasserstein distance}
Let $X$ be a Polish space with a metric $d$ and let $\mu,\nu$ be two probability measures on $X$. For any $q\geq 1$ the Wasserstein distance of order $q$ between $\mu$ and $\nu$ is defined as
\begin{equation}\label{eq: wasserstein distance def}
W_q\pa{\mu,\nu}=\pa{\inf_{\pi\in\Pi\pa{\mu,\nu}}\int_{X\times X}d^q\pa{x,y}d\pi(x,y)}^{\frac{1}{q}},
\end{equation}
where $\Pi\pa{\mu,\nu}$, the space of coupling, is the space of all probability measures on $X\times X$ with marginals $\mu$ and $\nu$ respectively.
\end{definition}
\begin{definition}\label{def: relative fisher information}
Let $\mu,\nu\in P\pa{\R^d}$. The relative Fisher Information of $\mu$ with respect to $\nu$ is defined as
\begin{equation}\label{eq: relative fisher infroamtion}
I\pa{\mu|\nu}=\begin{cases} \int_{\R^d}\abs{\nabla \log h}^2 h d\nu \quad h=\frac{d\mu}{d\nu}\\
\infty \quad\quad\quad\quad\quad\quad\quad \text{otherwise.}
\end{cases}
\end{equation}
\end{definition}
One can extend the definition of the relative Fisher Information to $\kac$ in the case where $d\mu=F_N d\sigma^N$ and $d\nu=d\sigma^N$.
\begin{definition}\label{def: relative fisher information on the sphere}
Let $F_N\in P\pa{\kac}$. The Fisher Information of $F_N$ is defined as
\begin{equation}\label{eq: relative fisher infroamtion on the sphere}
I_N\pa{F_N}=I_N\pa{F_N d\sigma^N | d\sigma^N}=\int_{\kac}\abs{\nabla_{\mathbb{S}} \log F_N}^2 F_N d\sigma^N,
\end{equation}
where $\nabla_{\mathbb{S}}$ is the gradient on the sphere.
\end{definition}
For more information about optimal transportation, its tools and applications we refer the reader to the excellent \cite{Vtransport1} and \cite{Vtransport2}.\\
Last, but not least, for any measurable, non-negative function $f$  on $\R^d$ we denote by
\begin{equation}\label{eq: M_k def}
M_k\pa{f}=\int_{\R^d}\abs{v}^k f(v)dv
\end{equation}
the $k-$th moment of $f$.\\ 
The main theorems of this paper are:
\begin{theorem}\label{thm: improved entropic on sphere with conditions}
Let $F_N\in P\pa{\kac}$ such that there exists $k>2$ with
\begin{equation}\nonumber
\mathcal{A}_k=\sup_{N}\frac{\sum_{j=1}^N M_k \pa{\Pi_1^{\pa{j}}\pa{F_N}}}{N}<\infty.
\end{equation}
Assume in addition that
\begin{equation}\nonumber
\mathcal{A}_I=\sup_{N} \frac{\sum_{i=1}^N I\pa{\Pi_1^{\pa{j}}\pa{F_N}}}{N}<\infty,
\end{equation}
and that there exists $C_H>0$ such that
\begin{equation}\nonumber
\inf_{N}\frac{H_N\pa{F_N}}{N} \geq C_H.
\end{equation}
Then there exist $C_1,C_2>0$, independent of $N$ and $F_N$, such that for any $0<\beta<\frac{k}{2}-1$ and $1<p<\min\pa{\frac{k+1}{3},\frac{k}{2}}$
\begin{equation}\label{eq: improved entropic on sphere with conditions}
\begin{gathered}
\sum_{j=1}^N \int_{\kac}F_J^{\pa{N}}\log F_j^{\pa{N}}d\sigma^N \leq \Bigg(1 +\frac{C_1}{C_H N}\\
 \frac{4C_2\pa{1+\frac{C_k}{2}}^{\frac{1}{k}}}{C_H \pa{2N}^{\frac{1}{4}-\frac{1}{2k}}}\pa{\mathcal{A}_I -1 }^{\frac{1}{2}}
\pa{1+\mathcal{A}_k}^{\frac{1}{k}}+\frac{\mathcal{A}_k}{2C_HN^{\frac{k}{2}-1-\beta}}  \\
+\frac{C_p\mathcal{A}_I^{\frac{p-1}{2p}} \mathcal{A}_k^{\frac{1}{p}}}{2C_H\pa{1-\frac{1}{N^{\beta}}}^{\frac{k}{2p}}N^{\frac{1}{2}\pa{\frac{k+1}{p}-3}}} \Bigg) H_N\pa{F_N}
=\pa{1+\epsilon^{\pa{1}}_{H,I,k}(N)}H_N\pa{F_N},
\end{gathered}
\end{equation}
where $C_p=\pa{\int_{\abs{x}<1}\abs{\log\pa{1-x^2}}^{\frac{p}{p-1}}}^{\frac{p-1}{p}}$, and $C_k=\sup_{N}\pa{\frac{2}{N}}^{\frac{k}{2}}\frac{\Gamma\pa{\frac{N+k}{2}}}{\Gamma\pa{\frac{N}{2}}}$.
\end{theorem}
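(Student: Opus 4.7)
The starting identity is
\begin{equation}\nonumber
\int_{\kac} F_j^{(N)}\log F_j^{(N)}\,d\sigma^N = H(f_j \mid p_N), \qquad f_j:=\Pi_1^{(j)}(F_N),
\end{equation}
where $p_N(v)=c_N(1-v^2/N)^{(N-3)/2}\mathbf{1}_{\abs{v}\le\sqrt N}$ is the density of the one-dimensional marginal of $d\sigma^N$ with respect to Lebesgue measure. Switching the reference to the standard Gaussian $\gamma$,
\begin{equation}\nonumber
H(f_j\mid p_N) = H(f_j\mid\gamma) + \int_{\R} f_j\,\log\tfrac{\gamma}{p_N}\,dv,
\end{equation}
reduces the proof, after summing over $j$, to estimating (a) the Gaussian-relative marginal entropies and (b) the logarithmic correction.

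For (a) we cannot invoke (\ref{eq: entropic inequality on R}) directly, since $F_N\,d\sigma^N$ is singular on $\R^N$. The plan is instead to compare $f_j$ to $\gamma$ via optimal transport. A short integration-by-parts identity gives $I(f_j\mid\gamma) = I(f_j) - 1$ whenever $f_j$ has unit variance, so that the defect $\mathcal A_I-1$ quantifies exactly the Fisher--Gaussian gap of the sphere marginals. A Cauchy--Schwarz pairing between $W_2(f_j,\gamma)$ and $\sqrt{I(f_j\mid\gamma)}$, combined with the Talagrand inequality $W_2^2(f_j,\gamma)\le 2H(f_j\mid\gamma)$ and the Otto--Villani HWI estimate, produces a bound of the form $\sum_j H(f_j\mid\gamma)\le H_N(F_N) + \mathrm{error}(\mathcal A_I,\mathcal A_k)$ whose error term, after careful bookkeeping of the $k$-th moment contribution measured by $(1+C_k/2)^{1/k}(1+\mathcal A_k)^{1/k}$, produces the second line of~(\ref{eq: improved entropic on sphere with conditions}) at the rate $(2N)^{-(1/4-1/(2k))}$.

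For (b) we split $\R$ into the moderate region $\abs{v}\le \sqrt{N(1-N^{-\beta})}$ and its complementary outer annulus. On the annulus, Markov's inequality against the $k$-th moment of $f_j$ absorbs the mild singularity of $\log(1-v^2/N)$, producing the third error term $\mathcal A_k/(2C_H N^{k/2-1-\beta})$. On the moderate region we expand
\begin{equation}\nonumber
\log\tfrac{p_N}{\gamma} = \log(c_N\sqrt{2\pi})+\tfrac{N-3}{2}\log\!\pa{1-\tfrac{v^2}{N}}+\tfrac{v^2}{2},
\end{equation}
and the constant piece $\log(c_N\sqrt{2\pi})=O(1/N)$ yields the first error term $C_1/(C_H N)$. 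The remaining $v$-dependent piece is paired with $f_j$ via H\"older's inequality at exponent $p\in(1,\min(\frac{k+1}{3},\frac{k}{2}))$: the logarithmic factor contributes the constant $C_p$ through its $L^{p/(p-1)}$ norm on $(-1,1)$ after rescaling by $\sqrt{N}$, while the conjugate $L^p$ norm of $f_j$ is interpolated between the moment bound $\mathcal A_k$ and the Fisher-information bound $\mathcal A_I$, which explains the appearance of the exponents $(p-1)/(2p)$ on $\mathcal A_I$ and $1/p$ on $\mathcal A_k$ in the fourth error term.

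Finally we sum over $j=1,\dots,N$ and divide by $H_N(F_N)\ge C_H N$, converting each absolute error into the relative error appearing in~(\ref{eq: improved entropic on sphere with conditions}). I expect part~(a) to be the main obstacle: reaching the sharp $(\mathcal A_I-1)^{1/2}$ dependence rather than a cruder $\mathcal A_I^{1/2}$ requires exploiting the Gaussian extremality of the Fisher information and combining it with an HWI-type transport estimate that can be resummed back to $H_N(F_N)$ without losing the prefactor close to $1$. The other three contributions are in essence Taylor expansion, H\"older, and Markov bookkeeping, though the explicit shape of the fourth term indicates that the Wasserstein comparison has to be carried out marginal-by-marginal rather than in a single step.
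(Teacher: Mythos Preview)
Your treatment of part (b), the logarithmic correction $\int f_j\log(\gamma/p_N)$, is essentially what the paper does: split at radius $\sqrt{N(1-N^{-\beta})}$, handle the inner piece by a Taylor-type estimate on $\log(1-v^2/N)+v^2/N$ against the $k$-th moment, and handle the outer piece by H\"older at exponent $p$, using $\lVert f_j\rVert_\infty\le I(f_j)^{1/2}$ to produce the factor $\mathcal A_I^{(p-1)/(2p)}$. The constant $\log(c_N\sqrt{2\pi})=O(1/N)$ gives the $C_1/(C_HN)$ term. So far so good.

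Part (a) has a genuine gap, and it is the heart of the argument. You write that HWI and Talagrand, applied between $f_j$ and $\gamma$, ``produce a bound of the form $\sum_j H(f_j\mid\gamma)\le H_N(F_N)+\text{error}$'', but there is no mechanism in what you describe that could make $H_N(F_N)$ appear on the right. HWI with reference $\gamma$ and second measure $\gamma$ gives $H(f_j\mid\gamma)\le \sqrt{I(f_j\mid\gamma)}\,W_2(f_j,\gamma)$, and $W_2(f_j,\gamma)$ has no reason to be small or to resum to the spherical entropy; combining it with Talagrand only returns you to $H(f_j\mid\gamma)$. You are trying to prove an inequality that is \emph{false} with constant $1$ in general (the sharp constant is $2$, by the very theorem you are refining), so any argument that uses only one-dimensional information about the $f_j$ is doomed: the leading term $H_N(F_N)$ must come from a genuine $N$-dimensional structure.

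The paper supplies this structure via the \emph{Euclidean extension} $\widetilde{F_N}(v)=F_N(\sqrt N\,v/\abs{v})\gamma_N(v)$ on $\R^N$, which satisfies $H(\widetilde{F_N}\mid\gamma_N)=H_N(F_N)$ exactly. One then applies the Euclidean subadditivity \eqref{eq: entropic inequality on R} to $\widetilde{F_N}$, obtaining $\sum_j H(\Pi_1^{(j)}(\widetilde{F_N})\mid\gamma)\le H_N(F_N)$ with the correct leading constant. The HWI step is run not between $f_j$ and $\gamma$ but between $f_j=\Pi_1^{(j)}(F_N)$ and $\Pi_1^{(j)}(\widetilde{F_N})$; the point is that $W_2(f_j,\Pi_1^{(j)}(\widetilde{F_N}))$ is genuinely small, of order $N^{-(1/4-1/(2k))}$, because the extension was designed so that its marginals are a radial smoothing of the original ones (this is where the explicit formula for $\Pi_1^{(j)}(\widetilde{F_N})$ and the $W_1$ estimate via Kantorovich--Rubinstein enter, followed by an interpolation $W_1\mapsto W_2$ using the $k$-th moment, which accounts for the factor $(1+C_k/2)^{1/k}(1+\mathcal A_k)^{1/k}$). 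The $(\mathcal A_I-1)^{1/2}$ you flagged is then simply $\sqrt{I(f_j\mid\gamma)}$ averaged via $\sum_j M_2(f_j)=N$ and the identity $I(f\mid\gamma)=I(f)+M_2(f)-2$, not a manifestation of Gaussian extremality.
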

\begin{theorem}\label{thm: imrpoved entropic on sphere with spherical fisher information}
Let $F_N\in P\pa{\kac}$ such that there exists $k>2$ with
\begin{equation}\nonumber
\mathcal{A}_k=\sup_{N}\frac{\sum_{j=1}^N M_k \pa{\Pi_1^{\pa{j}}\pa{F_N}}}{N}<\infty.
\end{equation}
Assume in addition that there exists $2<q<k$ such that
\begin{equation}\nonumber
\mathcal{A}^P_{q}=\sup_{N}\frac{\sum_{j=1}^N P_{q}^{\pa{j}}\pa{F_N}}{N}<\infty.
\end{equation}
where 
\begin{equation}\nonumber
P_{q}^{\pa{j}}\pa{F_N}=\int_{\R} \frac{\Pi_1^{\pa{j}}\pa{F_N}(v)}{\pa{1-\frac{v^2}{N}}^{\frac{q}{q-2}}}dv,
\end{equation}
and that there exist constants $C_H,C_I>0$ such that
\begin{equation}\nonumber
\begin{gathered}
\inf_{N}\frac{H_N\pa{F_N}}{N} \geq C_H,\\
\sup_{N}\frac{I_N\pa{F_N}}{N} \leq C_I.
\end{gathered}
\end{equation}
Then there exist $C_1,C_2>0$, independent of $N$ and $F_N$, such that for any $0<\beta<\frac{k}{2}-1$ 
\begin{equation}\label{eq: imrpoved entropic on sphere with spherical fisher information}
\begin{gathered}
\sum_{j=1}^N \int_{\kac}F_J^{\pa{N}}\log F_j^{\pa{N}}d\sigma^N \leq \Bigg(1+\frac{C_1}{C_H N}\\
+\frac{C_2 2^{\frac{3}{2}+\frac{2}{q}}}{C_H}\pa{1+\frac{C_k}{2}}^{\frac{1}{k}}\pa{\pa{2C_I+2}^{\frac{q}{2(q-1)}}\pa{\mathcal{A}^P_{q}}^{\frac{q-2}{2(q-1)}}+2}^{\frac{q}{q-1}}
\frac{\pa{1+\mathcal{A}_k}^{\frac{1}{k}}}{\pa{2N}^{\frac{1}{2q}-\frac{1}{2k}}}
\\
 +\frac{\mathcal{A}_{k}}{2C_HN^{\frac{k}{2}-1-\beta}}  
+\frac{N}{2C_H(N-3)}\frac{\eta_{N,\beta}}{N^{\frac{k}{4}-\frac{1}{2}}\pa{1-\frac{1}{N^\beta}}^{\frac{k}{4}+\frac{1}{2}}}\pa{2C_I+2}^{\frac{1}{2}}\pa{\mathcal{A}_{k}}^{\frac{1}{2}} \Bigg)H_N\pa{F_N}\\
=\pa{1+\epsilon^{\pa{2}}_{H,I,k,q}(N)}H_N\pa{F_N},
\end{gathered}
\end{equation}
where, $C_k=\sup_{N}\pa{\frac{2}{N}}^{\frac{k}{2}}\frac{\Gamma\pa{\frac{N+k}{2}}}{\Gamma\pa{\frac{N}{2}}}$ and $\eta_{N\beta}=\sup_{x\in \left[0,N^{-\beta} \right]}x\pa{\log x}^2$.
\end{theorem}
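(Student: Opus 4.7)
The strategy is to compare the spherical marginals with their Euclidean counterparts and reduce to the Gaussian subadditivity inequality \eqref{eq: entropic inequality on R}. Writing $g_N(v)=c_N\pa{1-\tfrac{v^2}{N}}^{\frac{N-3}{2}}$ for the Lebesgue density of the first marginal of $d\sigma^N$, the identity $\Pi_1^{\pa{j}}\pa{F_N}=F_j^{\pa{N}}g_N$ on $\pa{-\sqrt N,\sqrt N}$ rewrites the left-hand side of \eqref{eq: imrpoved entropic on sphere with spherical fisher information} as $\sum_{j=1}^N H\pa{\Pi_1^{\pa{j}}\pa{F_N}\,|\,g_N}$. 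I would first split
\[
H\pa{\Pi_1^{\pa{j}}\pa{F_N}\,|\,g_N}=H\pa{\Pi_1^{\pa{j}}\pa{F_N}\,|\,\gamma}+\int_\R\Pi_1^{\pa{j}}\pa{F_N}(v)\log\tfrac{\gamma(v)}{g_N(v)}\,dv,
\]
and use the Stirling-type expansion $\log\tfrac{g_N}{\gamma}=O\pa{\tfrac1N}\pa{1+v^2+v^4}$ on the bulk together with $\mathcal{A}_k$ and the identity $\sum_j M_2\pa{\Pi_1^{\pa{j}}\pa{F_N}}=N$ (forced by concentration on the sphere) to show that the second term sums to $O(1)$, producing the $C_1/\pa{C_H N}$ correction after division by $H_N\pa{F_N}\ge C_H N$.

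The heart of the proof is to bound $\sum_j H\pa{\Pi_1^{\pa{j}}\pa{F_N}\,|\,\gamma}$ by $\pa{1+o(1)}H_N\pa{F_N}$. My plan is to introduce the \emph{angular lift}
\[
\tilde F_N(v)\ :=\ F_N\!\pa{\sqrt N\,\tfrac{v}{\abs{v}}},
\]
regarded as a probability density with respect to $d\gamma_N$ on $\R^N$. Since $\gamma_N$ factorises as an independent product of the radial variable $R=\abs{v}/\sqrt N$ (chi-distributed after rescaling) and a uniform angular variable, a direct computation yields
\[
H\pa{\tilde F_N\,d\gamma_N\,|\,\gamma_N}=H_N\pa{F_N},\qquad I\pa{\tilde F_N\,d\gamma_N\,|\,\gamma_N}=\tfrac{N}{N-2}\,I_N\pa{F_N},
\]
the factor $N/(N-2)$ coming from $\mathbb{E}[R^{-2}]$ for the chi-law. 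Applying the Euclidean inequality \eqref{eq: entropic inequality on R} and tensorisation of the Fisher information to $\tilde F_N\,d\gamma_N$ then gives the key bounds
\[
\sum_{j=1}^N H\pa{\Pi_1^{\pa{j}}\pa{\tilde F_N}\,|\,\gamma}\ \le\ H_N\pa{F_N},\qquad \sum_{j=1}^N I\pa{\Pi_1^{\pa{j}}\pa{\tilde F_N}\,|\,\gamma}\ \le\ \tfrac{N}{N-2}\,I_N\pa{F_N},
\]
effectively turning the spherical Fisher information into control of the one-dimensional marginal Fisher informations of the lifted measure.

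The final task is to pass from $\Pi_1^{\pa{j}}\pa{\tilde F_N}$ back to $\Pi_1^{\pa{j}}\pa{F_N}$. The crucial observation is that under $\tilde F_N\,d\gamma_N$ the $j$th coordinate decomposes as $v_j=R\cdot Y_j$ with $Y_j\sim \Pi_1^{\pa{j}}\pa{F_N}$ and $R$ independent, $\mathbb{E}\pa{R-1}^2=O(1/N)$. This yields the explicit coupling
\[
W_2^2\pa{\Pi_1^{\pa{j}}\pa{F_N},\Pi_1^{\pa{j}}\pa{\tilde F_N}}\ \le\ \mathbb{E}\pa{R-1}^2\cdot M_2\pa{\Pi_1^{\pa{j}}\pa{F_N}},
\]
and an Otto--Villani HWI-type estimate along it bounds $H\pa{\Pi_1^{\pa{j}}\pa{F_N}\,|\,\gamma}-H\pa{\Pi_1^{\pa{j}}\pa{\tilde F_N}\,|\,\gamma}$ by quantities that, after Cauchy--Schwarz in $j$, are summable using the averaged Fisher information just derived; this accounts for the $\pa{2C_I+2}^{1/2}\pa{\mathcal A_k}^{1/2}$ and $\pa{2C_I+2}^{q/(2(q-1))}\pa{\mathcal A^P_q}^{(q-2)/(2(q-1))}$ structures in the main error term. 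Boundary effects are handled by truncating at $v^2\le N\pa{1-N^{-\beta}}$: Markov's inequality against $\mathcal A_k$ produces the $\mathcal A_k/N^{\frac{k}{2}-1-\beta}$ term and $\eta_{N,\beta}$ appears as the entropy cost of the truncated mass, while the weighted integrability $\mathcal A^P_q$ enters through Hölder's inequality with the conjugate pair $\pa{q,\tfrac{q}{q-2}}$ when one must integrate the $\pa{1-v^2/N}^{-q/(q-2)}$-singularity that arises from derivatives of $\log g_N$.

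The main obstacle, I expect, will be the optimisation producing the prescribed scaling. Because $I_N\pa{F_N}$ controls only the tangential variation of $F_N$, information about its one-dimensional Lebesgue marginals reaches $\R$ only through the lift, and one must pay a transportation cost to return. Balancing that cost against the moment tail and the boundary singularity of $g_N$ forces a careful choice of the truncation exponent $\beta$ and the Hölder pair $\pa{q,\tfrac{q}{q-2}}$; tracking the $N$-exponents through the resulting nested estimates so that each error term matches the scaling demanded by \eqref{eq: imrpoved entropic on sphere with spherical fisher information} is the most delicate point of the proof.
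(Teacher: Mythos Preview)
Your overall architecture matches the paper's: lift $F_N$ to $\R^N$ via $\tilde F_N(v)=F_N(\sqrt N\,v/|v|)$, use $H(\tilde F_N\,d\gamma_N|\gamma_N)=H_N(F_N)$ together with the Euclidean subadditivity \eqref{eq: entropic inequality on R} to bound $\sum_j H(\Pi_1^{(j)}(\tilde F_N)|\gamma)$, then pass from $\Pi_1^{(j)}(\tilde F_N)$ back to $\Pi_1^{(j)}(F_N)$ by an HWI-type transport estimate, and finally relate $H(\Pi_1^{(j)}(F_N)|\gamma)$ to the spherical quantity $\int F_j^{(N)}\log F_j^{(N)}\,d\sigma^N$ with the truncation at $v^2\le N(1-N^{-\beta})$.

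There is, however, a genuine gap in your Fisher-information step. The HWI inequality reads $H(f|\gamma)\le H(g|\gamma)+\sqrt{I(f|\gamma)}\,W_2(f,g)$: to bound $H(\Pi_1^{(j)}(F_N)|\gamma)-H(\Pi_1^{(j)}(\tilde F_N)|\gamma)$ from above you need the Fisher information of the \emph{source} $\Pi_1^{(j)}(F_N)$, not of the target $\Pi_1^{(j)}(\tilde F_N)$. Your bound $\sum_j I(\Pi_1^{(j)}(\tilde F_N)|\gamma)\le \tfrac{N}{N-2}I_N(F_N)$ is correct but controls the wrong object; plugging it into HWI would only yield the reverse inequality. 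This is precisely the obstruction that makes Theorem~\ref{thm: imrpoved entropic on sphere with spherical fisher information} harder than Theorem~\ref{thm: improved entropic on sphere with conditions}: $I_N(F_N)$ does not directly control the Lebesgue Fisher information $I(\Pi_1^{(j)}(F_N))$.

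The paper closes this gap by a different route. It invokes the spherical Fisher subadditivity $\sum_j I_N(F_j^{(N)})\le 2I_N(F_N)$ of Barthe--Cordero-Erausquin--Maurey, together with Lemma~\ref{lem: connection between fisher information on the sphere}, which shows that $I_N(F_j^{(N)})$ dominates the \emph{weighted} integral $\int(1-\tfrac{v^2}{N})\,|\tfrac{d}{dv}\log\Pi_1^{(j)}(F_N)|^2\,\Pi_1^{(j)}(F_N)\,dv$. To feed this weighted quantity into an HWI bound, the paper replaces Cauchy--Schwarz by H\"older with exponent $p=\tfrac{q}{q-1}\in(1,2)$, obtaining a ``distorted'' HWI involving $\bigl(\int|\tfrac{d}{dv}\log f|^p f\bigr)^{1/p}W_q(f,g)$; a second H\"older against $P_q^{(j)}$ then strips the weight $(1-v^2/N)$ from the Fisher integrand. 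So $\mathcal A^P_q$ compensates for the degeneracy of the spherical gradient near the poles inside the Fisher term, not for derivatives of $\log g_N$ as you suggest. This is the origin of both the factor $(2C_I+2)^{q/(2(q-1))}(\mathcal A^P_q)^{(q-2)/(2(q-1))}$ and the $W_q$-based rate $(2N)^{-(\frac{1}{2q}-\frac{1}{2k})}$ in \eqref{eq: imrpoved entropic on sphere with spherical fisher information}.
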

We'd like to point out a difference between our theorems: Theorem \ref{thm: improved entropic on sphere with conditions} requires an average bound on the Fisher Information of the first marginals of $F_N$, a property that is not very intrinsic to the sphere. Theorem \ref{thm: imrpoved entropic on sphere with spherical fisher information}, on the other hand, relaxes this requirement and asks for information about the appropriate Fisher Information on the sphere. However, as the gradient on the sphere of any function of one variable $v_j$ is dampened near the poles $v_j=\pm \sqrt{N}$, additional control condition near the poles is needed, which is where $P_q^{\pa{j}}$ comes into play.\\
The idea of the proof of both theorems is to extend $F_N$ from the sphere to $\R^N$ where we are able to use (\ref{eq: entropic inequality on R}). We shall call this extension \emph{the Euclidean extension}. Once that is done one investigates the connection between the marginals of the extension of $F_N$ and $F_N$ using an appropriate distance (the Wasserstein distance) and associate the entropies of the appropriate marginals using an HWI theorem. The final step involves finding the connection between the entropy of the marginal and the entropy of the marginal on the sphere.\\
At this point we'd like to note the connection between inequality (\ref{eq: entropic inequality on the sphere}) and Kinetic Theory. Kac's model is a many particle random model which gives rise to a one dimensional Boltzmann-like equation (called the Kac-Boltzmann equation) as a mean field limit. Kac had hoped to use his model, whose complexity comes form the number of particles and not any non-linearity, to solve unknown questions for the associated Boltzmann equation, one of which was the rate of convergence to equilibrium. Kac suggested to use the $L^2$ distance and the associated spectral gap of the evolution operator to tackle this particular problem. While the spectral gap was proved to bounded from below uniformly in $N$ (Kac's conjecture), the $L^2$ distance was shown to be a catastrophic distance to consider under the setting of the model. A new distance, the relative entropy on the sphere, was investigated and with it the appropriate candidate for the rate of convergence: the entropy-entropy production ratio
\begin{equation}\nonumber
\Gamma_N=\inf_{F_N}\frac{D_N\pa{F_N}}{H_N\pa{F_N}},
\end{equation}
where $-D_N\pa{F_N}$ is obtained by differentiating the entropy under Kac's flow. For exponential decay of the entropy one would hope to show the existence of $C>0$, independent of $N$ such that $\Gamma_N \geq C$. This is called the many body Cercignani's conjecture. Unfortunately, in \cite{Villani} Villani has proven that
\begin{equation}\label{eq: entropy entropy production bad rate}
\Gamma_N \geq \frac{2}{N-1},
\end{equation}
using the heat semigroup on Kac's sphere, and conjectured that $\Gamma_N=O\pa{\frac{1}{N}}$, a claim that was essentially proved in \cite{Einav1}. Surprisingly, Carlen showed in \cite{Eric} that one can get (\ref{eq: entropy entropy production bad rate}) by using (\ref{eq: entropic inequality on the sphere}) and an inductive argument. The factor $2$ plays a crucial role in the proof, and one notices that if it was replaced with $1+\epsilon_N$, with $\epsilon_N$ converging to zero in a certain way, one wold get a lower bound that is independent of $N$! This was the main motivation behind the investigation of the presented work. For more information about Kac's model and the many body Cercignani's conjecture we refer the reader to  \cite{CCRLV,Einav1.5,HM,MM,Villani}.\\
The stricture of the paper is as follows: In Section \ref{sec: extension} we will describe the Euclidean extension, and see the connections between the first marginals and their moments, with respect to the original density. The entropic connection between the first marginals of the extension and the original density will be investigated in Section \ref{sec: entropy relation I}, while the entropic connection between the first marginals and the first marginals on the sphere will be shown in Section \ref{sec: the entropy relation II}. We will prove our main theorems in Section \ref{sec: proofs} and give a non trivial example for when the conditions of the theorems are satisfied in Section \ref{sec: example}. We then conclude the paper with a few final remarks in Section \ref{sec: final} and deal with a few technical computations in the Appendix.\\
\textbf{Acknowledgement:} We would like to greatly thank Eric Carlen for many discussions and insights on key ideas all along the progression of this work, without which this paper would never have seen the light of day. We would also like to offer our gratitude to Nathael Gozlan for providing us with a reference for the 'distorted' HWI inequality we use in Section \ref{sec: entropy relation I}, and Cl\'ement Mouhot for several discussions on the presented results.
\section{The Euclidean Extension and Marginal Relation.}\label{sec: extension}
The first step on the path to improve (\ref{eq: entropic inequality on the sphere}) is passing from the sphere to the Euclidean space. This is done by extending a given $F_N\in P \pa{\kac}$ to a function on $\R^N$, $\widetilde{F_N}$, in a way that is compatible with the entropy.
\begin{definition}\label{def: euclidean extension}
Given $F_N\in P\pa{\kac}$, its euclidean extension $\widetilde{F_N}$ is defined as 
\begin{equation}\label{eq: extension extension}
\widetilde{F_N}(v)=F_N\pa{\sqrt{N}\frac{v}{\abs{v}}}\cdot \gamma_N(v),
\end{equation}
with $v\in\mathbb{R}^N\setminus \br{0}$. 
\end{definition}
\begin{lemma}\label{lem: consistency}
$\widetilde{F_N}\in P\pa{\mathbb{R}^N}$ and 
\begin{equation}\nonumber
H\pa{\widetilde{F_N}|\gamma_N}=H_N\pa{F_N}.
\end{equation}
\end{lemma}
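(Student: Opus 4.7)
The proof reduces to a direct computation in polar coordinates, exploiting the fact that $\widetilde{F_N}$ has been built to factorise into a radial Gaussian part and an angular part that is constant along rays.

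The plan is as follows. Parametrise $v\in\R^N\setminus\br{0}$ by $v=r\sigma$ with $r=\abs{v}\in(0,\infty)$ and $\sigma=v/\abs{v}\in\mathbb{S}^{N-1}$. Under the rescaling $w=\sqrt{N}\sigma$ the uniform probability measure $d\sigma^N$ on $\kac$ corresponds to the uniform probability measure $d\sigma^{N-1}_1$ on the unit sphere, while Lebesgue measure decomposes as $dv=r^{N-1}\,dr\,d\mathrm{Vol}_{\mathbb{S}^{N-1}}$ with $d\mathrm{Vol}_{\mathbb{S}^{N-1}}=\abs{\mathbb{S}^{N-1}}\,d\sigma^{N-1}_1$. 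The crucial observation is that $\widetilde{F_N}(r\sigma)=F_N(\sqrt{N}\sigma)\cdot \gamma_N(r\sigma)$ is a product of a function of $\sigma$ alone and a function of $r$ alone (since $\gamma_N$ is radial).

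First I would verify that $\widetilde{F_N}\in P\pa{\R^N}$. Using Fubini and the factorisation above,
\begin{equation}\nonumber
\int_{\R^N}\widetilde{F_N}(v)\,dv=\pa{\int_{\mathbb{S}^{N-1}}F_N\pa{\sqrt{N}\sigma}\,d\sigma^{N-1}_1(\sigma)}\cdot \pa{\int_{\R^N}\gamma_N(v)\,dv},
\end{equation}
where the polar factorisation of Lebesgue measure has been reassembled on the right factor. Both integrals equal $1$ by hypothesis ($F_N$ is a probability density on $\kac$) and by the fact that $\gamma_N$ is a probability density on $\R^N$.

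Next I would compute $H(\widetilde{F_N}|\gamma_N)$. By construction the Radon--Nikodym derivative is
\begin{equation}\nonumber
h(v)=\frac{\widetilde{F_N}(v)}{\gamma_N(v)}=F_N\pa{\sqrt{N}\tfrac{v}{\abs{v}}},
\end{equation}
so $h$ is constant along rays through the origin and depends only on $\sigma=v/\abs{v}$. Repeating the same polar decomposition,
\begin{equation}\nonumber
H\pa{\widetilde{F_N}|\gamma_N}=\int_{\R^N}h(v)\log h(v)\gamma_N(v)\,dv=\pa{\int_{\mathbb{S}^{N-1}}F_N(\sqrt{N}\sigma)\log F_N(\sqrt{N}\sigma)\,d\sigma^{N-1}_1}\cdot\pa{\int_{\R^N}\gamma_N(v)\,dv},
\end{equation}
and the last factor equals $1$ while the first, by the identification of $d\sigma^{N-1}_1$ with $d\sigma^N$ on $\kac$ via $w=\sqrt{N}\sigma$, equals $H_N(F_N)$.

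There is essentially no obstacle beyond bookkeeping of normalisation constants: the main thing to be careful of is the conversion between surface area measure $d\mathrm{Vol}_{\mathbb{S}^{N-1}}$ and the probability measures $d\sigma^{N-1}_1$, $d\sigma^N$, which requires the factor $\abs{\mathbb{S}^{N-1}}=2\pi^{N/2}/\Gamma(N/2)$ and the identity $\int_0^\infty r^{N-1}e^{-r^2/2}\,dr=2^{N/2-1}\Gamma(N/2)$. These combine with $(2\pi)^{-N/2}$ in $\gamma_N$ to give exactly $1$, which is what the abstract statement $\int_{\R^N}\gamma_N\,dv=1$ already encodes; reorganising the computation in that way removes the need to handle the constants explicitly.
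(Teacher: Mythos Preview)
Your proof is correct and follows essentially the same route as the paper: polar coordinates, the factorisation of $\widetilde{F_N}/\gamma_N$ into an angular function times a radial Gaussian, and the observation that the radial part integrates to $1$ by $\int_{\R^N}\gamma_N=1$. The paper presents the computation in the same order (entropy first, then remarks that the same argument gives the probability density claim), but there is no substantive difference.
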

\begin{proof}
Using spherical coordinates, the fact that $F_N\pa{\sqrt{N}\frac{v}{\abs{v}}}$ depends only on the angular variable and the fact that $\gamma_N$ is radial we see that:
\begin{equation}\nonumber
\begin{gathered}
H\pa{\widetilde{F}_N | \gamma_N}=\int_{\mathbb{R}^N}F_N\pa{\sqrt{N}\frac{v}{\abs{v}}}\log\pa{F_N\pa{\sqrt{N}\frac{v}{\abs{v}}}}\gamma_N(v)dv \\
=\pa{\int_{\mathbb{S}^{N-1}}F_N\pa{\sqrt{N}\frac{v}{\abs{v}}}\log\pa{F_N\pa{\sqrt{N}\frac{v}{\abs{v}}}}d\sigma^N_1}\pa{\abs{\mathbb{S}^{N-1}}\int_{0}^\infty \frac{r^{N-1}}{\pa{2\pi}^{\frac{N}{2}}}e^{-\frac{r^2}{2}}dr}\\
=H_N\pa{F_N}, 
\end{gathered}
\end{equation}
since 
\begin{equation}\label{eq: gamma integration}
1=\int_{\mathbb{R}^N}\gamma_N(v)dv =\abs{\mathbb{S}^{N-1}}\int_{0}^\infty \frac{r^{N-1}}{\pa{2\pi}^{\frac{N}{2}}}e^{-\frac{r^2}{2}}dr. 
\end{equation} 
Using the same argument one can easily show that $\widetilde{F_N}$ is indeed a probability density.
\end{proof}
Now that we have a possible extension at hand, the next step we'd like to explore is the relation between its first marginals and those of the original function. We start by recalling the following simple Fubini-Tonelli type theorem on the sphere (see \cite{Einav1} for instance):
\begin{equation}\label{eq: sphere integration}
\begin{gathered}
\int_{\mathbb{S}^{N-1}\pa{r}} F_N d\sigma_r^N = \frac{\abs{\mathbb{S}^{N-k-1}}}{\abs{\mathbb{S}^{N-1}}}\frac{1}{r^{N-2}} \int_{\sum_{i=1}^k v_i^2 \leq r^2}\pa{r^2-\sum_{i=1}^k v_i^2}^{\frac{N-k-2}{2}} \\
\pa{\int_{\mathbb{S}^{N-k-1}\pa{\sqrt{r^2-\sum_{i=1}^k v_i^2}}}F_N d\sigma^{N-k}_{\sqrt{r^2-\sum_{i=1}^k v_i^2}}}dv_1 \dots dv_k.
\end{gathered}
\end{equation}
Formula (\ref{eq: sphere integration}) allows us to write a concrete expression to the $k-$th marginal of a probability density function $F_N \in P\pa{\kac}$ in its $\pa{i_1,\dots,i_k}$ variables whenever $k\leq N-1$. Indeed, one easily see that
\begin{equation}\label{eq: normal marginal formula}
\begin{gathered}
\Pi^{\pa{i_1,\dots,i_k}}_k\pa{F_N}(v_{i_1},\dots,v_{i_k})=\frac{\abs{\mathbb{S}^{N-k-1}}}{\abs{\mathbb{S}^{N-1}}}\frac{1}{N^{\frac{k}{2}}}\\
\pa{1-\frac{\sum_{l=1}^k v_{i_l}^2}{N}}_{+}^{\frac{N-k-2}{2}} 
\pa{\int_{\mathbb{S}^{N-k-1}\pa{\sqrt{N-\sum_{i=1}^k v_{i_l}^2}}}F_N d\sigma^{N-k}_{\sqrt{N-\sum_{i=1}^k v_{i_l}^2}}}.
\end{gathered}
\end{equation}
Using this, we can conclude the following:
\begin{lemma}\label{lem: general marginal formula}
Let $F_N\in P\pa{\spa{S}{N-1}\pa{\sqrt{N}}}$. Then, the $k-$th marginal of $\widetilde{F_N}$ in the $\pa{i_1,\dots,i_k}$ variables is given by
\begin{equation}\label{eq: general marginal formula}
\begin{gathered}
\Pi_k^{\pa{i_1,\dots,i_k}}\pa{\widetilde{F}_N}(v_1,\dots,v_k) \\
= \abs{\mathbb{S}^{N-1}}N^{\frac{k}{2}}\int_0 ^\infty \frac{r\pa{\sum_{l=1}^k v_{i_l}^2 +r^2}^{\frac{N-k-2}{2}}}{\pa{2\pi}^{\frac{N}{2}}}e^{-\frac{r^2+\sum_{l=1}^k v_{i_l}^2}{2}} \\
\Pi_k^{\pa{i_1,\dots,i_k}}\pa{F_N}\pa{\frac{\sqrt{N}v_{i_1}}{\sqrt{\sum_{l=1}^k v_{i_l}^2 +r^2}},\dots,\frac{\sqrt{N}v_{i_k}}{\sqrt{\sum_{l=1}^k v_{i_l}^2 +r^2}}}dr.
\end{gathered}
\end{equation}
\end{lemma}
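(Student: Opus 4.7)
By symmetry of the setup I may take $(i_1,\dots,i_k)=(1,\dots,k)$ without loss of generality. Write $v=(v_1,\dots,v_k,w)$ with $w\in\R^{N-k}$ and set $a=\sum_{l=1}^{k}v_l^2$, so that $\abs{v}^2=a+\abs{w}^2$. Since $\widetilde{F_N}$ is invariant under scaling in $v$ times the radial Gaussian weight, the marginal is
\begin{equation}\nonumber
\Pi_k^{(1,\dots,k)}(\widetilde{F_N})(v_1,\dots,v_k)=\int_{\R^{N-k}}F_N\pa{\tfrac{\sqrt{N}(v_1,\dots,v_k,w)}{\sqrt{a+\abs{w}^2}}}\frac{e^{-(a+\abs{w}^2)/2}}{(2\pi)^{N/2}}dw.
\end{equation}
The first move is to pass to polar coordinates in the $w$ variable, $w=r\omega$ with $r\geq 0$ and $\omega\in\mathbb{S}^{N-k-1}$, producing the Jacobian $r^{N-k-1}$ times the unnormalized surface measure $\abs{\mathbb{S}^{N-k-1}}d\sigma_{1}^{N-k}(\omega)$.

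The inner integral at a fixed $r$ then averages $F_N$ over a sub-sphere of $\kac$: its first $k$ coordinates are frozen at $u_l=\sqrt{N}v_l/\sqrt{a+r^2}$ (so $\sum u_l^2 =Na/(a+r^2)$) while the last $N-k$ coordinates range uniformly over $\mathbb{S}^{N-k-1}\pa{\sqrt{N}r/\sqrt{a+r^2}}$, i.e.\ over the sphere of radius $\sqrt{N-\sum u_l^2}$. This is exactly the set that appears in the $k$-variable analogue of the spherical marginal \eqref{eq: first marginal in jth variable on the sphere}, so the inner average equals the spherical marginal of $F_N$ evaluated at $(u_1,\dots,u_k)$. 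Using formula \eqref{eq: normal marginal formula} at that point, and noting that $1-\sum u_l^2/N=r^2/(a+r^2)$, I can rewrite the spherical marginal as
\begin{equation}\nonumber
\frac{\abs{\mathbb{S}^{N-1}}}{\abs{\mathbb{S}^{N-k-1}}}N^{k/2}\pa{\tfrac{a+r^2}{r^2}}^{(N-k-2)/2}\Pi_k^{(1,\dots,k)}(F_N)\pa{\tfrac{\sqrt{N}v_1}{\sqrt{a+r^2}},\dots,\tfrac{\sqrt{N}v_k}{\sqrt{a+r^2}}}.
\end{equation}

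Plugging this back into the radial integral, the factors $\abs{\mathbb{S}^{N-k-1}}$ cancel, the powers of $r$ combine as $r^{N-k-1}\cdot r^{-(N-k-2)}=r$, and the powers of $(a+r^2)$ combine into $(a+r^2)^{(N-k-2)/2}$, which reproduces precisely \eqref{eq: general marginal formula}. The only real care needed is the bookkeeping between the normalized and unnormalized surface measures and the identification of the inner average with the spherical marginal of $F_N$; this latter identification is where the radius $\sqrt{N-\sum u_l^2}=\sqrt{N}r/\sqrt{a+r^2}$ must line up exactly, and I expect this to be the only subtle step in an otherwise computational proof.
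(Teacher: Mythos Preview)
Your proof is correct and follows essentially the same route as the paper: pass to polar coordinates in the integrated $N-k$ variables, recognise the resulting angular integral as the spherical average defining the $k$-th marginal of $F_N$ via \eqref{eq: normal marginal formula}, and then cancel $\abs{\mathbb{S}^{N-k-1}}$ and combine the powers of $r$ and $a+r^2$. The only cosmetic difference is that the paper writes out the intermediate steps more explicitly, whereas you compress the bookkeeping; the identification of the inner sphere's radius with $\sqrt{N-\sum u_l^2}$ that you flag as the subtle step is exactly the point the paper verifies line by line.
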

\begin{proof}
By its definition
\begin{equation}\nonumber
\Pi_k^{\pa{i_1,\dots,i_k}}\pa{\widetilde{F}_N}(v_{i_1},\dots,v_{i_k}) = \int_{\mathbb{R}^{N-k}}F_N\pa{\sqrt{N}\frac{v}{\abs{v}}}\gamma_{N}\pa{v_1,\dots,v_N}d\tilde{v}_{i_1,\dots,v_{i_k}}
\end{equation}
where $d\tilde{v}_{i_1,\dots,v_{i_k}}$ represents $dv$ excluding $dv_{i_1}\dots dv_{i_k}$. For the sake of simplicity of notations we'll assume that $i_l=l$. We find that
\begin{equation}\nonumber
\begin{gathered}
\Pi_k^{\pa{1,\dots,k}}\pa{\widetilde{F}_N}(v_{1},\dots,v_{k}) =\\
\int_{\mathbb{R}^{N-k}}F_N\pa{\frac{\sqrt{N} v_1}{\sqrt{\sum_{i=1}^k v_i^2 +\sum_{i=k+1}^N v_i^2 }},\dots,\frac{\sqrt{N} v_N}{\sqrt{\sum_{i=1}^k v_i^2 +\sum_{i=k+1}^N v_i^2 }}}
\frac{e^{-\frac{\sum_{i=1}^k v_i^2 + \sum_{i=k+1}^N v_i^2}{2}}}{\pa{2\pi}^{\frac{N}{2}}}dv_{k+1}\dots dv_N\\
\end{gathered}
\end{equation}
\begin{equation}\nonumber
\begin{gathered}
= \int_{\mathbb{S}^{N-k-1}\times [0,\infty)}F_N\pa{\frac{\sqrt{N} v_1}{\sqrt{\sum_{i=1}^k v_i^2 +r^2}},\dots,\frac{\sqrt{N} v_k}{\sqrt{\sum_{i=1}^k v_i^2 +r^2}},\frac{\sqrt{N}r\Omega}{\sqrt{\sum_{i=1}^k v_i^2 +r^2}}}
\frac{r^{N-k-1}e^{-\frac{\sum_{i=1}^k v_i^2 + r^2}{2}}}{\pa{2\pi}^{\frac{N}{2}}}drd\Omega.\\
=\abs{\mathbb{S}^{N-k-1}} \int_{[0,\infty)}dr \frac{r^{N-k-1}e^{-\frac{\sum_{i=1}^k v_i^2 + r^2}{2}}}{\pa{2\pi}^{\frac{N}{2}}} \\
\end{gathered}
\end{equation}
\begin{equation}\nonumber
\pa{\int_{\mathbb{S}^{N-k-1}\pa{\frac{\sqrt{N}r}{\sqrt{\sum_{i=1}^k v_i^2 + r^2}}}}F_N\pa{\frac{\sqrt{N} v_1}{\sqrt{\sum_{i=1}^k v_i^2 +r^2}},\dots,\frac{\sqrt{N} v_k}{\sqrt{\sum_{i=1}^k v_i^2 +r^2}},\sigma}
 d\sigma^{N-k}_{\mathbb{S}^{N-k-1}\pa{\frac{\sqrt{N}r}{\sqrt{\sum_{i=1}^k v_i^2 +r^2}}}}}
\end{equation}
Since
\begin{equation}\nonumber
N-N\sum_{i=1}^k \frac{v_i^2}{\sum_{i=1}^k v_i^2+r^2}=\frac{Nr^2}{\sum_{i=1}^k v_i^2+r^2},
\end{equation}
we have that
\begin{equation}\nonumber
\begin{gathered}
\int_{\mathbb{S}^{N-k-1}\pa{\frac{\sqrt{N}r}{\sqrt{\sum_{i=1}^k v_i^2 + r^2}}}}F_N\pa{\frac{\sqrt{N} v_1}{\sqrt{\sum_{i=1}^k v_i^2 +r^2}},\dots,\frac{\sqrt{N} v_k}{\sqrt{\sum_{i=1}^k v_i^2 +r^2}},\sigma}
 d\sigma^{N-k}_{\mathbb{S}^{N-k-1}\pa{\frac{\sqrt{N}r}{\sqrt{\sum_{i=1}^k v_i^2 +r^2}}}}\\
 =\frac{\abs{\mathbb{S}^{N-1}}N^{\frac{k}{2}}}{\abs{\mathbb{S}^{N-k-1}}}
\frac{\Pi_k^{\pa{1,\dots,k}}\pa{F_N}\pa{\frac{\sqrt{N}v_1}{\sqrt{\sum_{i=1}^k v_i^2 +r^2}},\dots,\frac{\sqrt{N}v_k}{\sqrt{\sum_{i=1}^k v_i^2 +r^2}}}}
{\pa{1-\frac{N\pa{\frac{\sum_{i=1}^k v_i^2}{\sum_{i=1}^k v_i^2 +r^2}}}{N}}_{+}^{\frac{N-k-2}{2}}}\\
=\frac{\abs{\mathbb{S}^{N-1}}N^{\frac{k}{2}}}{\abs{\mathbb{S}^{N-k-1}}}
\frac{\pa{\sum_{i=1}^k v_i^2 +r^2}^{\frac{N-k-2}{2}}\Pi_k^{\pa{1,\dots,k}}\pa{F_N}\pa{\frac{\sqrt{N}v_1}{\sqrt{\sum_{i=1}^k v_i^2 +r^2}},\dots,\frac{\sqrt{N}v_k}{\sqrt{\sum_{i=1}^k v_i^2 +r^2}}}}{r^{N-k-2}}.
\end{gathered}
\end{equation}
Combining the above equalities yields the desired result.
\end{proof}
Of particular interest is the case of the first marginal in the $j-$th variable, $\Pi_1^{\pa{j}}\pa{\widetilde{F_N}}$. Using Lemma \ref{lem: general marginal formula} we obtain
\begin{corollary}\label{cor: first marginal connection}
Let $F_N\in P\pa{\spa{S}{N-1}\pa{\sqrt{N}}}$. Then
\begin{equation}\label{eq: first marginal connection}
\Pi_1^{\pa{j}}\pa{\widetilde{F_N}}\pa{v}=\frac{\abs{\mathbb{S}^{N-1}}N^{\frac{N}{2}}}{\pa{2\pi}^{\frac{N}{2}}}\int_0^{\text{sgn}(v)\sqrt{N}}\Pi_1^{\pa{j}}\pa{F_N}(x) \frac{v^{N-1}}{x^N}e^{-\frac{Nv^2}{2x^2}}dx \\.
\end{equation}
\end{corollary}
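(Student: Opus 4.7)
The plan is to apply Lemma \ref{lem: general marginal formula} with $k=1$ and then perform a carefully chosen change of variables to convert the integration over $r\in[0,\infty)$ into an integration over $x\in[0,\text{sgn}(v)\sqrt{N}]$, where $x$ will play the role of the argument of $\Pi_1^{(j)}(F_N)$. After that the result should follow by bookkeeping.

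First I would specialise \eqref{eq: general marginal formula} to $k=1$, $i_1=j$, and a single variable $v$, obtaining
\begin{equation}\nonumber
\Pi_1^{(j)}(\widetilde{F_N})(v)=\frac{\abs{\mathbb{S}^{N-1}}\sqrt{N}}{(2\pi)^{N/2}}\int_0^\infty r\,(v^2+r^2)^{\frac{N-3}{2}}e^{-\frac{v^2+r^2}{2}}\,\Pi_1^{(j)}(F_N)\!\pa{\frac{\sqrt{N}\,v}{\sqrt{v^2+r^2}}}\,dr.
\end{equation}
The natural substitution is then $x=\sqrt{N}\,v/\sqrt{v^2+r^2}$, which maps $r\in[0,\infty)$ bijectively onto the interval between $0$ and $\text{sgn}(v)\sqrt{N}$ (the endpoint $r=0$ gives $x=\text{sgn}(v)\sqrt{N}$, and $r\to\infty$ gives $x\to 0$).

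Next I would compute the three quantities needed by this substitution: the relation $v^2+r^2=Nv^2/x^2$, the Jacobian
\begin{equation}\nonumber
dr=-\frac{x\,(v^2+r^2)^2}{N v^2\, r}\,dx,
\end{equation}
and the identity $e^{-(v^2+r^2)/2}=e^{-Nv^2/(2x^2)}$. Plugging these in, the factor of $r$ cancels, and a short simplification gives
\begin{equation}\nonumber
r\,(v^2+r^2)^{\frac{N-3}{2}}\,dr=-\frac{N^{\frac{N-1}{2}}\,v^{N-1}}{x^{N}\,\text{sgn}(x)}\,dx,
\end{equation}
where I have used $(v^2+r^2)^{(N+1)/2}=N^{(N+1)/2}|v|^{N+1}/|x|^{N+1}$ together with $|v|^{N+1}/v^{2}=v^{N-1}\,\text{sgn}(v)$ and $\text{sgn}(x)=\text{sgn}(v)$. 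The sign flip introduced by $\text{sgn}(x)$ is absorbed when reversing the limits of integration so that $x$ runs from $0$ to $\text{sgn}(v)\sqrt{N}$, producing exactly the formula in \eqref{eq: first marginal connection}.

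The only mild obstacle is the sign tracking for $v<0$: one must check that the factor $\text{sgn}(x)$ coming out of the Jacobian exactly matches the orientation reversal of the integration limits, so that the same compact expression with upper limit $\text{sgn}(v)\sqrt{N}$ covers both signs of $v$. Everything else is a direct algebraic computation that follows once the substitution is set up.
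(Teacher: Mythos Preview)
Your approach is exactly the paper's: specialise Lemma~\ref{lem: general marginal formula} to $k=1$ and then substitute $x=\sqrt{N}\,v/\sqrt{v^2+r^2}$, tracking the limits and signs. One small slip in your sign bookkeeping: the identity $|v|^{N+1}/v^2=v^{N-1}\operatorname{sgn}(v)$ is false for odd $N$ (the left side is simply $|v|^{N-1}$), so the extra $\operatorname{sgn}(x)$ in your displayed formula for $r(v^2+r^2)^{(N-3)/2}\,dr$ should not be there. The cleaner route, which the paper takes, is to use $\operatorname{sgn}(x)=\operatorname{sgn}(v)$ to write $|v|^{N-1}/(x|x|^{N-1})=v^{N-1}/x^N$ directly, giving $r(v^2+r^2)^{(N-3)/2}\,dr=-N^{(N-1)/2}v^{N-1}x^{-N}\,dx$ with no residual sign factor; the orientation reversal of the limits then cancels the minus sign uniformly for both signs of $v$.
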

\begin{proof}
From (\ref{eq: general marginal formula}) we know that
\begin{equation}\nonumber
\begin{gathered}
\Pi_1^{\pa{j}}\pa{\widetilde{F}_N}(v) = \abs{\mathbb{S}^{N-1}}\sqrt{N} 
\int_0 ^\infty \Pi_1^{\pa{j}}\pa{F_N}\pa{\frac{\sqrt{N}v}{\sqrt{v^2+r^2}}}\frac{r\pa{v^2 +r^2}^{\frac{N-3}{2}}}{\pa{2\pi}^{\frac{N}{2}}}e^{-\frac{r^2+v^2}{2}}dr.
\end{gathered}
\end{equation}
Using the change of variables $x=\frac{\sqrt{N}v}{\sqrt{v^2+r^2}}$ we find that
\begin{equation}\nonumber
r^2=\frac{v^2\pa{N-x^2}}{x^2}=\frac{Nv^2}{x^2}-v^2,
\end{equation}
or
\begin{equation}\nonumber
r=\frac{\abs{v}\sqrt{N-x^2}}{\abs{x}}=\frac{v\sqrt{N-x^2}}{x},
\end{equation}
as the sign of $v$ and $x$ are the same. Since
\begin{equation}\nonumber
\begin{gathered}
r=0 \Longrightarrow x=\text{sgn}(v)\sqrt{N},
\end{gathered}
\end{equation}
\begin{equation}\nonumber
\begin{gathered}
r=\infty \Longrightarrow x=0,
\end{gathered}
\end{equation}
\begin{equation}\nonumber
\begin{gathered}
rdr=-\frac{Nv^2}{x^3}dx.
\end{gathered}
\end{equation}
we conclude that
\begin{equation}\nonumber
\begin{gathered}
\Pi_1^{\pa{j}}\pa{\widetilde{F}_N}(v) =\abs{\mathbb{S}^{N-1}}\sqrt{N}
\int_0^{\text{sgn}(v)\sqrt{N}} \Pi_1^{\pa{j}}\pa{F_N}(x)\pa{\frac{Nv^2}{x^2}}^{\frac{N-3}{2}}\frac{Nv^2}{x^3\pa{2\pi}^{\frac{N}{2}}}e^{-\frac{Nv^2}{2x^2}}dx\\
\end{gathered}
\end{equation}
\begin{equation}\nonumber
\begin{gathered}
=\frac{\abs{\mathbb{S}^{N-1}}N^{\frac{N}{2}}}{\pa{2\pi}^{\frac{N}{2}}}\int_0^{\text{sgn}(v)\sqrt{N}}\Pi_1^{\pa{j}}\pa{F_N}(x) \frac{\abs{v}^{N-1}}{x\abs{x}^{N-1}}e^{-\frac{Nv^2}{2x^2}}dx \\
=\frac{\abs{\mathbb{S}^{N-1}}N^{\frac{N}{2}}}{\pa{2\pi}^{\frac{N}{2}}}\int_0^{\text{sgn}(v)\sqrt{N}}\Pi_1^{\pa{j}}\pa{F_N}(x) \frac{v^{N-1}}{x^N}e^{-\frac{Nv^2}{2x^2}}dx, \\
\end{gathered}
\end{equation}
completing the proof.
\end{proof}
An interesting application of Corollary \ref{cor: first marginal connection} is a moment connection between $\Pi_1^{\pa{j}}\pa{F_N}$ and $\Pi_1^{\pa{j}}\pa{\widetilde{F_N}}$.
\begin{lemma}\label{lem: moment connection}
Let $F_N\in P\pa{\spa{S}{N-1}\pa{\sqrt{N}}}$. Then
\begin{equation}\label{eq: moment connection}
\int_{\mathbb{R}}|v|^m\Pi_1^{\pa{j}}\pa{\widetilde{F}_N}(v)dv= \pa{\frac{2}{N}}^{\frac{m}{2}}\frac{\Gamma\pa{\frac{N+m}{2}}}{\Gamma\pa{\frac{N}{2}}}\int_{-\sqrt{N}}^{\sqrt{N}}|v|^m\Pi_1^{\pa{j}}\pa{F_N}(v)dv.
\end{equation}
\end{lemma}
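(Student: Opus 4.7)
The plan is to substitute the explicit representation of $\Pi_1^{(j)}(\widetilde{F_N})(v)$ given by Corollary \ref{cor: first marginal connection} directly into the left-hand side of (\ref{eq: moment connection}), then exchange the order of integration via Fubini's theorem, reducing the computation to a one-dimensional Gaussian moment that can be evaluated by the Gamma function. Since the formula from the Corollary has a $\mathrm{sgn}(v)\sqrt{N}$ in the upper limit of its inner integral, and the integrand contains $v^{N-1}/x^{N}$ rather than $|v|^{N-1}/|x|^N$, the cleanest approach is to split the outer integral $\int_{\mathbb{R}}$ into $\int_0^\infty$ and $\int_{-\infty}^0$ and treat each separately.

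For $v>0$, the integration domain in $x$ is simply $(0,\sqrt{N})$, and after Fubini the integral in $v$ becomes
\begin{equation}\nonumber
\int_0^\infty v^{m+N-1}e^{-\frac{Nv^2}{2x^2}}dv=\frac{1}{2}\pa{\frac{2x^{2}}{N}}^{\frac{m+N}{2}}\Gamma\pa{\frac{m+N}{2}},
\end{equation}
which (after cancelling the factor $x^{-N}$) produces $x^{m}$ inside the remaining integral against $\Pi_1^{(j)}(F_N)$ on $(0,\sqrt{N})$. Combining the prefactor $\abs{\mathbb{S}^{N-1}}N^{N/2}/(2\pi)^{N/2}$ with $\abs{\mathbb{S}^{N-1}}=2\pi^{N/2}/\Gamma(N/2)$ collapses the constants to exactly $\pa{2/N}^{m/2}\Gamma((N+m)/2)/\Gamma(N/2)$, as required.

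For $v<0$, I would first rewrite the Corollary's formula on the interval $(-\sqrt{N},0)$, tracking the sign changes: the reversal $\int_0^{-\sqrt{N}}=-\int_{-\sqrt{N}}^0$ together with $v^{N-1}/x^{N}=-|v|^{N-1}/|x|^{N}$ (since one of $v,x$ contributes $(-1)^{N-1}$ and the other $(-1)^N$) turns everything into a manifestly positive integrand in $|v|$ and $|x|$. Substituting $u=-v$ and applying Fubini again yields the analogous identity on $(-\sqrt{N},0)$, and the two contributions add up to the full integral over $(-\sqrt{N},\sqrt{N})$ appearing on the right-hand side.

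The main obstacle I anticipate is purely bookkeeping: keeping the sign conventions in $v^{N-1}/x^N$ consistent with the oriented integral $\int_0^{\mathrm{sgn}(v)\sqrt{N}}$ from Corollary \ref{cor: first marginal connection}, so that after all rewrites the integrand against which $|v|^m$ is integrated is indeed nonnegative and the two halves $v>0$ and $v<0$ reassemble into a single integral over $(-\sqrt N,\sqrt N)$. Once this sign analysis is done carefully, the rest of the argument is a direct Fubini--Gamma computation, and the identity (\ref{eq: moment connection}) follows.
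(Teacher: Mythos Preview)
Your proposal is correct and follows essentially the same route as the paper: split into $v>0$ and $v<0$, apply Fubini, and reduce to a one-dimensional Gaussian moment computed via the Gamma function, with the sphere-area formula $\abs{\mathbb{S}^{N-1}}=2\pi^{N/2}/\Gamma(N/2)$ collapsing the constants. The paper carries out the inner integral via the substitution $y=\sqrt{N}\,v/x$ and identifies the result through $\abs{\mathbb{S}^{N+m-1}}$, while you evaluate the Gaussian moment directly; this is a cosmetic difference only.
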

\begin{proof}
Using (\ref{eq: first marginal connection}), we have that
\begin{equation}\nonumber
\begin{gathered}
\int_{\mathbb{R}}|v|^m\Pi_1\pa{\widetilde{F}_N}(v)dv \\
=\frac{\abs{\mathbb{S}^{N-1}}N^{\frac{N}{2}}}{\pa{2\pi}^{\frac{N}{2}}}\int_{\mathbb{R}}\int_0^{\text{sgn}(v)\sqrt{N}}\Pi_1\pa{F_N}(x) \frac{v^{N-1}|v|^m}{x^N}e^{-\frac{Nv^2}{2x^2}}dxdv\\
\end{gathered}
\end{equation}
\begin{equation}\nonumber
\begin{gathered}
=\frac{\abs{\mathbb{S}^{N-1}}N^{\frac{N}{2}}}{\pa{2\pi}^{\frac{N}{2}}}\Bigg(\int_0^\infty\int_0^{\sqrt{N}}\Pi_1\pa{F_N}(x) \frac{v^{N+m-1}}{x^N}e^{-\frac{Nv^2}{2x^2}}dxdv\\
-\int_{-\infty}^0 \int_{-\sqrt{N}}^0 \Pi_1\pa{F_N}(x) \frac{(-1)^mv^{N+m-1}}{x^N}e^{-\frac{Nv^2}{2x^2}}dxdv\Bigg)\\
\end{gathered}
\end{equation}
\begin{equation}\nonumber
\begin{gathered}
\underset{y=\frac{\sqrt{N}}{x}v}{=}\frac{\abs{\mathbb{S}^{N-1}}N^{\frac{N}{2}}}{\pa{2\pi}^{\frac{N}{2}}}\Bigg(\int_0^\infty\int_0^{\sqrt{N}}\Pi_1\pa{F_N}(x) \frac{x^m y^{N+m-1}}{N^{\frac{N+m}{2}}}e^{-\frac{y^2}{2}}dxdy\\
-\int_{\infty}^0 \int_{-\sqrt{N}}^0 \Pi_1\pa{F_N}(x) \frac{(-1)^m x^m y^{N+m-1}}{N^{\frac{N+m}{2}}}e^{-\frac{y^2}{2}}dxdy\Bigg)\\
\end{gathered}
\end{equation}
\begin{equation}\nonumber
\begin{gathered}
=\frac{\abs{\mathbb{S}^{N-1}}\pa{2\pi}^{\frac{m}{2}}}{N^{\frac{m}{2}}}\int_{-\sqrt{N}}^{\sqrt{N}}|x|^m \Pi_1\pa{F_N}(x)\pa{\frac{1}{\pa{2\pi}^{\frac{N+m}{2}}}\int_{0}^\infty y^{N+m-1}e^{-\frac{y^2}{2}}dy}dx\\ 
= \frac{\abs{\mathbb{S}^{N-1}}\pa{2\pi}^{\frac{m}{2}}}{N^{\frac{m}{2}}\abs{\mathbb{S}^{N+m-1}}}\int_{-\sqrt{N}}^{\sqrt{N}}|x|^m\Pi_1\pa{F_N}(x)dx.
\end{gathered}
\end{equation}
The result follows from the formula 
\begin{equation}\nonumber
\abs{\spa{S}{N-1}}=\frac{2\pi^{\frac{N}{2}}}{\Gamma\pa{\frac{N}{2}}}.
\end{equation}
\end{proof}
Lemma \ref{lem: moment connection} implies the following:
\begin{corollary}\label{cor: control of moemnt of extension via moments of original}
For any $k>0$ there exists $C_k>0$, independent of $N$, such that
\begin{equation}\label{eq: control of moemnt of extension via moments of original}
M_k\pa{\Pi_1^{\pa{j}}\pa{\widetilde{F_N}}} \leq C_k M_k\pa{\Pi_1^{\pa{j}}\pa{F_N}}.
\end{equation}
Moreover, when $k=2$ there is equality in (\ref{eq: control of moemnt of extension via moments of original}) with $C_2=1$.
\end{corollary}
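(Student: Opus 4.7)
The statement is essentially a direct corollary of the exact moment identity established in Lemma \ref{lem: moment connection}, so the plan is very short: translate that identity into the inequality and verify that the prefactor is uniformly bounded in $N$.

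The first step is to apply Lemma \ref{lem: moment connection} with $m=k$. Since $F_N$ lives on $\kac$, its first marginal $\Pi_1^{\pa{j}}\pa{F_N}$ is supported inside $[-\sqrt{N},\sqrt{N}]$, so the truncated integral on the right-hand side of (\ref{eq: moment connection}) coincides with the full moment $M_k\pa{\Pi_1^{\pa{j}}\pa{F_N}}$. Therefore
\begin{equation}\nonumber
M_k\pa{\Pi_1^{\pa{j}}\pa{\widetilde{F_N}}}=\pa{\frac{2}{N}}^{\frac{k}{2}}\frac{\Gamma\pa{\frac{N+k}{2}}}{\Gamma\pa{\frac{N}{2}}}\,M_k\pa{\Pi_1^{\pa{j}}\pa{F_N}},
\end{equation}
which already gives the desired inequality with the constant $c_{N,k}=\pa{2/N}^{k/2}\Gamma\pa{(N+k)/2}/\Gamma\pa{N/2}$ in place of $C_k$, and with equality whenever $c_{N,k}\leq C_k$.

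The second step is to check that $C_k:=\sup_N c_{N,k}$ is finite. For $k=2$ this is immediate: using $\Gamma\pa{\frac{N+2}{2}}=\frac{N}{2}\Gamma\pa{\frac{N}{2}}$ one obtains $c_{N,2}=1$ for every $N$, which gives both finiteness of $C_2$ and the equality case $C_2=1$. For general $k>0$, the standard Stirling asymptotic $\Gamma(x+a)/\Gamma(x)\sim x^a$ as $x\to\infty$ yields
\begin{equation}\nonumber
\pa{\frac{2}{N}}^{\frac{k}{2}}\frac{\Gamma\pa{\frac{N+k}{2}}}{\Gamma\pa{\frac{N}{2}}}\xrightarrow[N\to\infty]{}1,
\end{equation}
so the sequence $\{c_{N,k}\}_{N\geq 1}$ (defined for the finitely many small $N$ where the marginal makes sense, and convergent at infinity) is bounded, and the supremum $C_k$ is finite.

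Combining these two observations gives $M_k\pa{\Pi_1^{\pa{j}}\pa{\widetilde{F_N}}}\leq C_k\,M_k\pa{\Pi_1^{\pa{j}}\pa{F_N}}$ uniformly in $N$, with $C_2=1$. There is no genuine obstacle here beyond quoting a Gamma-function asymptotic; if one wanted a fully explicit bound rather than an asymptotic one could instead invoke a Wendel-type inequality $\Gamma(x+a)/\Gamma(x)\leq (x+a)^a$ (after reduction to the case $0<a\leq 1$ via the functional equation) to produce an effective value for $C_k$, but this is not needed for the corollary as stated.
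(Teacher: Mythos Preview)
Your proof is correct and follows essentially the same route as the paper: both derive the equality $M_k\pa{\Pi_1^{\pa{j}}\pa{\widetilde{F_N}}}=c_{N,k}M_k\pa{\Pi_1^{\pa{j}}\pa{F_N}}$ from Lemma \ref{lem: moment connection}, set $C_k=\sup_N c_{N,k}$, and use the Stirling asymptotic for $\Gamma$ to conclude finiteness, with the $k=2$ case handled directly via $\Gamma\pa{\frac{N}{2}+1}=\frac{N}{2}\Gamma\pa{\frac{N}{2}}$. The only cosmetic difference is that the paper writes out the even-$k$ product formula explicitly before specialising to $k=2$, whereas you go straight to the functional equation.
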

\begin{proof}
From (\ref{eq: moment connection}) we see that choosing
\begin{equation}\nonumber
C_k=\sup_N \pa{\frac{2}{N}}^{\frac{k}{2}}\frac{\Gamma\pa{\frac{N+k}{2}}}{\Gamma\pa{\frac{N}{2}}}
\end{equation}
proves the claim. Since $\Gamma(z)=z^{z-\frac{1}{2}}e^{-z}\sqrt{2\pi}\pa{1+\frac{1}{12z}+\dots}$ as $z$ approaches infinity, we have that
\begin{equation}\nonumber
\begin{gathered}
\frac{\Gamma\pa{\frac{N+k}{2}}}{\Gamma\pa{\frac{N}{2}}}=\frac{\pa{\frac{N}{2}}^{\frac{N+k-1}{2}}\pa{1+\frac{k}{N}}^{\frac{N+k-1}{2}}e^{-\frac{N+k}{2}}\sqrt{2\pi}\pa{1+\frac{1}{6(N+k)}+\dots}}{\pa{\frac{N}{2}}^{\frac{N-1}{2}}e^{-\frac{N}{2}}\sqrt{2\pi}\pa{1+\frac{1}{6N}+\dots}}\\
=\pa{\frac{N}{2}}^{\frac{k}{2}}\pa{1+\frac{k}{N}}^{\frac{N+k-1}{2}}e^{-\frac{k}{2}}\frac{1+\frac{1}{6(N+k)}+\dots}{1+\frac{1}{6N}+\dots},
\end{gathered}
\end{equation}
showing that $C_k$ is indeed finite. Lastly, If $k=2l$ then
\begin{equation}\nonumber
\Gamma\pa{\frac{N+k}{2}}=\Gamma\pa{\frac{N}{2}+l}=\pa{\Pi_{i=0}^{l-1}\pa{\frac{N}{2}-i}}\Gamma\pa{\frac{N}{2}}. 
\end{equation}
In this case,
\begin{equation}\nonumber
M_k\pa{\Pi_1^{\pa{j}}\pa{\widetilde{F_N}}} =\pa{\Pi_{i=0}^{\frac{k}{2}-1}\pa{1-\frac{2i}{N}}} M_k\pa{\Pi_1^{\pa{j}}\pa{F_N}}\leq M_k\pa{\Pi_1^{\pa{j}}\pa{F_N}},
\end{equation}
with equality if and only if $k=2$.
\end{proof}
\section{The Entropy Relation - From Marginals To The Marginals Of The Extension.}\label{sec: entropy relation I}
Now that we have managed to extend our probability density from $\kac$ to $\R^N$ we would like to find out how much information we may have 'lost' during that process, at least in the sense of the entropy functional. The main theoretical tool to connect between the two will be the HWI inequality (see \cite{C-E,Vtransport1,Vtransport2}). In this section we will slowly investigate the quantities that will play a role in the final connection between the entropies, namely the Wasserstein distance and the Fisher Information, and eventually quantify the 'loss' in the transition followed by our extension.
\begin{lemma}\label{lem: wasserstein 1 distance}
Let $F_N\in P\pa{\spa{S}{N-1}\pa{\sqrt{N}}}$. Then for any $j=1,\dots,N$
\begin{equation}\label{eq: wasserstein 1 distance}
W_1\pa{\Pi_1^{\pa{j}}\pa{F_N},\Pi_1^{\pa{j}}\pa{\widetilde{F_N}}} \leq \frac{M_2\pa{\Pi_1^{\pa{j}}\pa{F_N}}^{\frac{1}{2}}}{\sqrt{2N}}\pa{1+\tau_N},
\end{equation}
where $\tau_N\underset{N\rightarrow\infty}{\longrightarrow}0$ as $N$ goes to infinity, is given explicitly and independently of $F_N$.
\end{lemma}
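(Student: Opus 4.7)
The plan is to exploit the explicit kernel representation of the first marginal of the extension given in Corollary \ref{cor: first marginal connection}. Performing the change of variables $y=\sqrt{N}v/x$ in that formula (essentially undoing the computation used in Lemma \ref{lem: moment connection}) one recognises that the Markov kernel sending $\Pi_1^{\pa{j}}\pa{F_N}$ to $\Pi_1^{\pa{j}}\pa{\widetilde{F_N}}$ is multiplication by $Y/\sqrt{N}$, where $Y$ is a chi-distributed random variable with $N$ degrees of freedom. In probabilistic language: if $X$ is distributed according to $\Pi_1^{\pa{j}}\pa{F_N}$ and $Y\geq 0$ is chi-distributed with $N$ degrees of freedom, independent of $X$, then $V:=XY/\sqrt{N}$ has law $\Pi_1^{\pa{j}}\pa{\widetilde{F_N}}$ (with the sign convention of the integration limits in Corollary \ref{cor: first marginal connection} handled automatically since $X$ and $V$ share the same sign).

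The joint law of $(X,V)$ is therefore an admissible coupling, and the very definition of $W_1$ yields
\begin{equation*}
W_1\pa{\Pi_1^{\pa{j}}\pa{F_N},\Pi_1^{\pa{j}}\pa{\widetilde{F_N}}} \leq \mathbb{E}\abs{V-X} = \mathbb{E}\abs{X}\cdot\mathbb{E}\abs{\tfrac{Y}{\sqrt{N}}-1},
\end{equation*}
the last equality holding by independence of $X$ and $Y$. Cauchy--Schwarz on $\mathbb{E}\abs{X}$ gives the desired factor $M_2\pa{\Pi_1^{\pa{j}}\pa{F_N}}^{1/2}$, and the factor of $F_N$ is now completely decoupled from the constant $\tau_N$ we still have to extract.

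It remains to understand $\mathbb{E}\abs{Y/\sqrt{N}-1}$. Using the classical identities $\mathbb{E}Y^2=N$ and $\mathbb{E}Y=\sqrt{2}\,\Gamma\pa{\tfrac{N+1}{2}}/\Gamma\pa{\tfrac{N}{2}}$ for the chi distribution, a second application of Cauchy--Schwarz gives
\begin{equation*}
\mathbb{E}\abs{\tfrac{Y}{\sqrt{N}}-1} \leq \pa{2-2\sqrt{\tfrac{2}{N}}\,\tfrac{\Gamma\pa{\frac{N+1}{2}}}{\Gamma\pa{\frac{N}{2}}}}^{\frac{1}{2}}.
\end{equation*}
Stirling's expansion (already recalled in the proof of Corollary \ref{cor: control of moemnt of extension via moments of original}) yields $\sqrt{2/N}\,\Gamma\pa{\frac{N+1}{2}}/\Gamma\pa{\frac{N}{2}} = 1-\frac{1}{4N}+O\pa{1/N^2}$, so the right hand side equals $\frac{1}{\sqrt{2N}}\pa{1+\tau_N}$ with $\tau_N\to 0$, and $\tau_N$ can be written down in closed form from the gamma ratio since it does not depend on $F_N$ at all.

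The conceptual heart of the argument is therefore the identification of the marginal kernel as a scaled chi distribution, which turns a transport problem into a one-line coupling bound; the only actual obstacle is technical bookkeeping, namely extracting a clean closed form for $\tau_N$ out of the gamma-function asymptotics in such a way that it clearly tends to zero and is manifestly independent of $F_N$. Once this is done, the estimate \eqref{eq: wasserstein 1 distance} follows by chaining the two displays above.
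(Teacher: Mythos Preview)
Your proof is correct and follows essentially the same path as the paper: both identify, via the change of variables $y=\sqrt{N}v/x$ in Corollary~\ref{cor: first marginal connection}, that $\Pi_1^{(j)}(\widetilde{F_N})$ is the law of $XY/\sqrt{N}$ with $X\sim\Pi_1^{(j)}(F_N)$ and $Y$ an independent chi$(N)$ variable, then factorise and apply Cauchy--Schwarz twice followed by Stirling. The only difference is cosmetic: the paper reaches the bound $\mathbb{E}|X|\cdot\mathbb{E}|Y/\sqrt{N}-1|$ through the Kantorovich--Rubinstein dual formulation (testing against $1$-Lipschitz functions), whereas you use the primal formulation directly by exhibiting $(X,XY/\sqrt{N})$ as a coupling---the two routes are equivalent and yield the identical numerical estimate.
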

\begin{proof}
The proof relies on the famous Kantorovich-Rubinstein formula (see \cite{Vtransport1}): For any $\mu,\nu\in P\pa{X}$, where $X$ is a Polish space,
\begin{equation}\nonumber
W_1\pa{\mu,\nu}=\sup \pa{\int_{X}\psi(x)\pa{d\mu-d\nu}(x)},
\end{equation} 
where the supremum is taken over all $1-$Lipschitz functions $\psi$.\\ 
For any $\phi\in C_b\pa{\mathbb{R}}$ we find that
\begin{equation}\nonumber
\begin{gathered}
\int_\mathbb{R} \phi(v)\Pi_1^{\pa{j}}\pa{\widetilde{F_N}}dv = \frac{\abs{\spa{S}{N-1}}}{\pa{2\pi}^{\frac{N}{2}}}N^{\frac{N}{2}}\int_{0}^{\infty}\int_{0}^{\sqrt{N}}\phi(v) \Pi_1^{\pa{j}}\pa{F_N}(x)\frac{v^{N-1}}{x^N}e^{-\frac{Nv^2}{2x^2}}dvdx\\ 
 +\frac{\abs{\spa{S}{N-1}}}{\pa{2\pi}^{\frac{N}{2}}}N^{\frac{N}{2}}\int_{-\infty}^0\int_{-\sqrt{N}}^0 \phi(v) \Pi_1^{\pa{j}}\pa{F_N}(x)\pa{-\frac{v^{N-1}}{x^N}}e^{-\frac{Nv^2}{2x^2}}dvdx \\
 \end{gathered}
\end{equation}
 \begin{equation}\nonumber
\begin{gathered}
 \underset{y=\frac{\sqrt{N}v}{x}}{=}\frac{\abs{\spa{S}{N-1}}}{\pa{2\pi}^{\frac{N}{2}}}\int_{0}^{\infty}\int_{0}^{\sqrt{N}}\phi\pa{\frac{yx}{\sqrt{N}}} \Pi_1^{\pa{j}}\pa{F_N}(x)y^{N-1}e^{-\frac{y^2}{2}}dydx\\
+ \frac{\abs{\spa{S}{N-1}}}{\pa{2\pi}^{\frac{N}{2}}}\int_{\infty}^0 \int_{-\sqrt{N}}^0\phi\pa{\frac{yx}{\sqrt{N}}} \Pi_1^{\pa{j}}\pa{F_N}(x)\pa{-y^{N-1}}e^{-\frac{y^2}{2}}dydx \\
\end{gathered}
\end{equation}
\begin{equation}\nonumber
\begin{gathered}
 =\frac{\abs{\spa{S}{N-1}}}{\pa{2\pi}^{\frac{N}{2}}}\int_{0}^{\infty}\int_{-\sqrt{N}}^{\sqrt{N}}\phi\pa{\frac{yx}{\sqrt{N}}} \Pi_1^{\pa{j}}\pa{F_N}(x)y^{N-1}e^{-\frac{y^2}{2}}dydx.
\end{gathered}
\end{equation}
Since $\int_{0}^\infty y^{N-1}e^{-\frac{y^2}{2}}dy = \frac{\pa{2\pi}^{\frac{N}{2}}}{\abs{\spa{S}{N-1}}}$ and $\Pi_1^{\pa{j}}\pa{F_N}$ is supported in $[-\sqrt{N},\sqrt{N}]$ we see that
\begin{equation}\nonumber
\begin{gathered}
\abs{\int_\mathbb{R} \phi(x)\Pi_1^{\pa{j}}\pa{F_N}(x)dx - \int_{\mathbb{R}}\phi(v)\Pi_1^{\pa{j}}\pa{\widetilde{F_N}}(v)dv} \\
\leq  \frac{\abs{\spa{S}{N-1}}}{\pa{2\pi}^{\frac{N}{2}}}\int_{0}^{\infty}\int_{-\sqrt{N}}^{\sqrt{N}}\abs{\phi\pa{\frac{yx}{\sqrt{N}}}-\phi(x)} \Pi_1^{\pa{j}}\pa{F_N}(x)y^{N-1}e^{-\frac{y^2}{2}}dydx.
\end{gathered}
\end{equation}
If in addition $\phi$ is $1-$Lipshitz we have that
\begin{equation}\nonumber
\begin{gathered}
\abs{\int_\mathbb{R} \phi(x)\Pi_1^{\pa{j}}\pa{F_N}(x)dx - \int_{\mathbb{R}}\phi(v)\Pi_1^{\pa{j}}\pa{\widetilde{F_N}}(v)dv} \\
\leq  \frac{\abs{\spa{S}{N-1}}}{\pa{2\pi}^{\frac{N}{2}}}\int_{0}^{\infty}\int_{-\sqrt{N}}^{\sqrt{N}}\abs{\frac{yx}{\sqrt{N}}-x} \Pi_1^{\pa{j}}\pa{F_N}(x)y^{N-1}e^{-\frac{y^2}{2}}dydx \\
\end{gathered}
\end{equation}
\begin{equation}\nonumber
\begin{gathered}
=\pa{\int_{-\sqrt{N}}^{\sqrt{N}}\abs{x}\Pi_1^{\pa{j}}\pa{F_N}(x)dx}\pa{ \frac{\abs{\spa{S}{N-1}}}{\pa{2\pi}^{\frac{N}{2}}}\int_{0}^{\infty}\abs{\frac{y}{\sqrt{N}}-1} y^{N-1}e^{-\frac{y^2}{2}}dydx}\\
\leq M_2\pa{\Pi_1^{\pa{j}}\pa{F_N}}^{\frac{1}{2}}\pa{\frac{\abs{\spa{S}{N-1}}}{\pa{2\pi}^{\frac{N}{2}}}\int_{0}^{\infty}\pa{\frac{y}{\sqrt{N}}-1}^2 y^{N-1}e^{-\frac{y^2}{2}}dy}^{\frac{1}{2}}
\end{gathered}
\end{equation}
where we have used the fact that for any probability measure $\mu$ one has that
\begin{equation}\nonumber
\int \abs{x}d\mu(x) \leq \pa{\int d\mu(x)}^{\frac{1}{2}}\pa{\int x^2 d\mu(x)}^{\frac{1}{2}}=\pa{\int x^2 d\mu(x)}^{\frac{1}{2}},
\end{equation}
Next, we see that
\begin{equation}\nonumber
\begin{gathered}
\int_{0}^{\infty}\pa{\frac{y}{\sqrt{N}}-1}^2 y^{N-1}e^{-\frac{y^2}{2}}dy = \frac{1}{N}\int_{0}^{\infty} y^{N+1}e^{-\frac{y^2}{2}}dy-\frac{2}{\sqrt{N}}\int_{0}^{\infty} y^{N}e^{-\frac{y^2}{2}}dy + \int_{0}^{\infty} y^{N-1}e^{-\frac{y^2}{2}}dy \\
=\frac{\pa{2\pi}^{\frac{N+2}{2}}}{\abs{\spa{S}{N+1}}N}-\frac{2\pa{2\pi}^{\frac{N+1}{2}}}{\abs{\spa{S}{N}}\sqrt{N}}+\frac{\pa{2\pi}^{\frac{N}{2}}}{\abs{\spa{S}{N-1}}}.
\end{gathered}
\end{equation}
It is easy to check that $\frac{\pa{2\pi}^{\frac{N+2}{2}}}{\abs{\spa{S}{N+1}}N}=\frac{\pa{2\pi}^{\frac{N}{2}}}{\abs{\spa{S}{N-1}}}$ and conclude that
\begin{equation}\nonumber
\begin{gathered}
\abs{\int_\mathbb{R} \phi(x)\Pi_1^{\pa{j}}\pa{F_N}(x)dx - \int_{\mathbb{R}}\phi(v)\Pi_1^{\pa{j}}\pa{\widetilde{F_N}}(v)dv} \\
\leq M_2\pa{\Pi_1^{\pa{j}}\pa{F_N}}^{\frac{1}{2}}\sqrt{2}\pa{1-\frac{\sqrt{2\pi}}{\sqrt{N}}\frac{\abs{\spa{S}{N-1}}}{\abs{\spa{S}{N}}}}^{\frac{1}{2}}
\end{gathered}
\end{equation}
\begin{equation}\nonumber
\begin{gathered}
=\pa{2M_2\pa{\Pi_1^{\pa{j}}\pa{F_N}}}^{\frac{1}{2}}\pa{1-\pa{1+\frac{1}{N}}^{\frac{N}{2}}e^{-\frac{1}{2}}\pa{1+O\pa{\frac{1}{N}}}}^{\frac{1}{2}}
\end{gathered}
\end{equation}
where we have used the surface volume of the sphere formula, and the asymptotic expression for the gamma function. As
\begin{equation}\nonumber
\begin{gathered}
\log\pa{\pa{1+\frac{1}{N}}^{\frac{N}{2}}e^{-\frac{1}{2}}}=\frac{N}{2}\log\pa{1+\frac{1}{N}}-\frac{1}{2} = \frac{N}{2}\pa{\frac{1}{N}-\frac{1}{2N^2} + ...} - \frac{1}{2}\\
=-\frac{1}{4N}\pa{1+O\pa{\frac{1}{N}}},
\end{gathered}
\end{equation}
we find that
\begin{equation}\nonumber
\begin{gathered}
1-\pa{1+\frac{1}{N}}^{\frac{N}{2}}e^{-\frac{1}{2}}\pa{1+O\pa{\frac{1}{N}}}=1-e^{-\frac{1}{4N}\pa{1+O\pa{\frac{1}{N}}}}\pa{1+O\pa{\frac{1}{N}}} = \frac{1}{4N}\pa{1+O\pa{\frac{1}{N}}},
\end{gathered}
\end{equation}
from which we conclude the proof.
\end{proof}
The proof of Lemma \ref{lem: wasserstein 1 distance} shows why we considered the $W_1$ distance. The expression for the marginals of $\widetilde{F_N}$ is complicated, but, as suggested when we investigated its moments, the complexity disappears when one integrates against a simple function. The fact we can replace minimization of a general coupling with integration against Lipschitz functions was the reason for the choice of this metric. However, as mentioned before, we will want to use the HWI inequality in our investigation. For that purpose we will need higher orders of Wasserstein distances. Our next Lemma, which is a simple extension of a result proved by Hauray and Mischler in \cite{HM}, allows us to make the connection between $W_1$ and $W_q$, $q\geq 1$, \emph{as long as we have additional moment control}.
\begin{lemma}\label{lem: hauray and mischeler wasserstein connection}
Let $f,g\in P\pa{\mathbb{R}}$ and let $k>0$. Denote by
\begin{equation}\nonumber
\mathcal{M}_k=\mathcal{M}_k\pa{f}+\mathcal{M}_k\pa{g}=\int_{\mathbb{R}}\pa{1+\abs{v}^2}^{\frac{k}{2}}f(v)dv+\int_{\mathbb{R}}\pa{1+\abs{v}^2}^{\frac{k}{2}}g(v)dv.
\end{equation}
Then, for any $k>q\geq 2$ one has that
\begin{equation}\label{eq: HM W 2 to W 1 connection}
W_q\pa{f,g} \leq 2^{1+\frac{1}{q}}\mathcal{M}_k^{\frac{1}{k}}W_1\pa{f,g}^{\frac{1}{q}-\frac{1}{k}}.
\end{equation}
\end{lemma}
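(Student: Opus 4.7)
The approach is to control $W_q\pa{f,g}^q$ by a truncation argument that interpolates between $W_1$ and a moment bound, and then to reshape the resulting exponents into the exact form stated by means of the a priori estimate $W_1 \leq 2\mathcal{M}_k^{\frac{1}{k}}$.

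First I would fix an optimal coupling $\pi\in\Pi\pa{f,g}$ for $W_1\pa{f,g}$, so that $\int\abs{x-y}\,d\pi = W_1\pa{f,g}$ while $W_q\pa{f,g}^q \leq \int \abs{x-y}^q\,d\pi$. For any cutoff $A>0$, separating the cases $\abs{x-y}\leq A$ and $\abs{x-y} > A$ yields the elementary pointwise bound $\abs{x-y}^q \leq A^{q-1}\abs{x-y} + A^{q-k}\abs{x-y}^k$. Combining this with the convexity estimate $\abs{x-y}^k \leq 2^{k-1}\pa{\pa{1+\abs{x}^2}^{\frac{k}{2}} + \pa{1+\abs{y}^2}^{\frac{k}{2}}}$ and integrating against $\pi$ gives
\begin{equation*}
W_q\pa{f,g}^q \leq A^{q-1} W_1\pa{f,g} + 2^{k-1}A^{q-k}\mathcal{M}_k.
\end{equation*}
Balancing the two terms by taking $A=\pa{2^{k-1}\mathcal{M}_k/W_1\pa{f,g}}^{\frac{1}{k-1}}$ (valid since $k>q\geq 2$, with the case $W_1\pa{f,g}=0$ being trivial) and substituting back produces the Hauray--Mischler-type inequality
\begin{equation*}
W_q\pa{f,g}^q \leq 2^q \mathcal{M}_k^{\frac{q-1}{k-1}} W_1\pa{f,g}^{\frac{k-q}{k-1}}.
\end{equation*}

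The remaining step --- and the only genuinely delicate part --- is the algebraic conversion from the exponents $\frac{q-1}{k-1},\frac{k-q}{k-1}$ produced above to the cleaner $\frac{q}{k},\frac{k-q}{k}$ in the statement. For this I would use the a priori estimate $W_1\pa{f,g} \leq W_k\pa{f,g} \leq 2\mathcal{M}_k^{\frac{1}{k}}$ (a consequence of H\"older's inequality applied to any coupling together with the same $k$-th moment bound as above) to split
\begin{equation*}
W_1\pa{f,g}^{\frac{k-q}{k-1}} = W_1\pa{f,g}^{\frac{k-q}{k}}\cdot W_1\pa{f,g}^{\frac{k-q}{k\pa{k-1}}} \leq W_1\pa{f,g}^{\frac{k-q}{k}}\pa{2\mathcal{M}_k^{\frac{1}{k}}}^{\frac{k-q}{k\pa{k-1}}},
\end{equation*}
and then, using $\mathcal{M}_k \geq 1$ together with the numerical inequality
\begin{equation*}
\frac{q-1}{k-1}+\frac{k-q}{k^2\pa{k-1}} \leq \frac{q}{k}
\end{equation*}
(which after clearing denominators reduces to $\pa{k-q}\pa{k-1}\geq 0$), absorb the surplus $\mathcal{M}_k$-power into $\mathcal{M}_k^{\frac{q}{k}}$. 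A direct check of the multiplicative constants shows they collect to at most $2^{q+1}$, and taking $q$-th roots produces the stated $2^{1+\frac{1}{q}}$. The main obstacle is essentially this last bookkeeping: each individual inequality is elementary, but matching the precise exponents $\frac{1}{k},\frac{1}{q}-\frac{1}{k}$ and constant $2^{1+\frac{1}{q}}$ requires $W_1 \leq 2\mathcal{M}_k^{\frac{1}{k}}$ to be applied in exactly the right place, and the convexity of $\mathcal{M}_k$-exponents (using $\mathcal{M}_k\geq 1$) to be tracked carefully.
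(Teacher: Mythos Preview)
Your argument is correct, but it proceeds differently from the paper. The paper introduces the truncated metric $d(x,y)=\min\pa{\abs{x-y},1}$ and proves the pointwise inequality $\abs{x-y}^q \leq R^q d(x,y) + 2^k R^{q-k}\pa{\abs{x}^k+\abs{y}^k}$ for $R\geq 1$; integrating and choosing $R=2\mathcal{M}_k^{1/k}/\widetilde{W_1}^{1/k}$ (which is automatically $\geq 1$ since $\widetilde{W_1}\leq 1$) delivers the exponents $q/k$ and $(k-q)/k$ directly, after which $\widetilde{W_1}\leq W_1$ finishes. Your route instead truncates at a free level $A$ in $\abs{x-y}$, which gives the Hauray--Mischler exponents $(q-1)/(k-1)$ and $(k-q)/(k-1)$, and then you need the extra interpolation step via $W_1\leq 2\mathcal{M}_k^{1/k}$ and $\mathcal{M}_k\geq 1$ to massage these into $q/k$ and $(k-q)/k$. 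The paper's trick of passing through the bounded metric is what buys the clean exponents immediately and avoids your final bookkeeping; your version, in exchange, never introduces an auxiliary distance and stays entirely with the standard $W_1$. Both are valid and the constants match.
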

\begin{proof}
Denote by $d(x,y)=\min\pa{\abs{x-y},1}$ and by $\widetilde{W_1}$ the Wasserstien distance of order $1$ associated to $d$. We claim that for all $q\geq 1$ and $R\geq 1$
\begin{equation}\label{eq: HM W q I}
\abs{x-y}^q \leq R^q d(x,y) + \frac{2^k}{R^{k-q}}\pa{\abs{x}^k+\abs{y}^k},
\end{equation}
and leave the proof of this inequality to the Appendix. Integrating (\ref{eq: HM W q I}) against any $\pi\in\Pi\pa{\mu,\nu}$ gives us
\begin{equation}\nonumber
W_q^q\pa{\mu,\nu} \leq R^q \widetilde{W_1}\pa{\mu,\nu} + \frac{2^k}{R^{k-q}}\mathcal{M}_k\pa{\mu,\nu}.
\end{equation}
The choice $R=2\frac{\mathcal{M}^{\frac{1}{k}}_k\pa{\mu,\nu}}{\widetilde{W_1}^{\frac{1}{k}}\pa{\mu,\nu}} \geq 1$ yields
\begin{equation}\nonumber
W_q\pa{\mu,\nu} \leq 2\pa{2}^{\frac{1}{q}}\mathcal{M}^{\frac{1}{k}}_k\pa{\mu,\nu}\widetilde{W_1}^{\frac{1}{q}-\frac{1}{k}}\pa{\mu,\nu},
\end{equation}
from which the result follows as $\widetilde{W_1} \leq W_1$.
\end{proof}
\begin{corollary}\label{cor: wasserstein 2 distance}
Let $F_N\in P\pa{\spa{S}{N-1}\pa{\sqrt{N}}}$ be such that $M_k\pa{\Pi_1^{\pa{j}}\pa{F_N}}<\infty$ for some $k>2$. Then for any $2\leq q <k$
\begin{equation}\label{eq: wasserstein 2 distance}
\begin{gathered}
W_q\pa{\Pi_1^{\pa{j}}\pa{F_N},\Pi_1^{\pa{j}}\pa{\widetilde{F_N}}} \leq 2^{\frac{3}{2}+\frac{1}{q}}\pa{1+\frac{C_k}{2}}^{\frac{1}{k}}\\
\pa{1+M_k\pa{\Pi_1^{\pa{j}}\pa{F_N}}}^{\frac{1}{k}}\frac{M_2\pa{\Pi_1^{\pa{j}}\pa{F_N}}^{\frac{1}{2q}-\frac{1}{2k}}}{\pa{2N}^{\frac{1}{2q}-\frac{1}{2k}}}\pa{1+O\pa{\frac{1}{N}}},
\end{gathered}
\end{equation}
with $C_k=\sup_{N}\pa{\frac{2}{N}}^{\frac{k}{2}}\frac{\Gamma\pa{\frac{N+k}{2}}}{\Gamma\pa{\frac{N}{2}}}$.
\end{corollary}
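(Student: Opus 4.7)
The plan is to chain together three ingredients already at our disposal: Lemma \ref{lem: wasserstein 1 distance} for the $W_1$ bound, Lemma \ref{lem: hauray and mischeler wasserstein connection} to interpolate up to $W_q$, and Corollary \ref{cor: control of moemnt of extension via moments of original} to translate a moment bound on $\Pi_1^{(j)}(\widetilde{F_N})$ into one on $\Pi_1^{(j)}(F_N)$. There is no genuine difficulty here: the corollary is essentially a routine combination of the lemmas that precede it, the only care needed being in the algebraic cleanup that produces the constant $(1+C_k/2)^{1/k}(1+M_k(\Pi_1^{(j)}(F_N)))^{1/k}$.

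First I would apply Lemma \ref{lem: hauray and mischeler wasserstein connection} with $f=\Pi_1^{(j)}(F_N)$ and $g=\Pi_1^{(j)}(\widetilde{F_N})$, which immediately gives
\[
W_q\pa{\Pi_1^{(j)}\pa{F_N},\Pi_1^{(j)}\pa{\widetilde{F_N}}} \;\leq\; 2^{1+\frac{1}{q}}\,\mathcal{M}_k^{\frac{1}{k}}\,W_1\pa{\Pi_1^{(j)}\pa{F_N},\Pi_1^{(j)}\pa{\widetilde{F_N}}}^{\frac{1}{q}-\frac{1}{k}}.
\]
It then remains to control $\mathcal{M}_k$ in the form stated. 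Using the elementary inequality $(1+v^2)^{k/2}\leq 2^{k/2-1}(1+|v|^k)$ (valid for $k\geq 2$ by convexity) one has $\mathcal{M}_k(h)\leq 2^{k/2-1}(1+M_k(h))$ for any probability density $h$. Applying this to both marginals and invoking Corollary \ref{cor: control of moemnt of extension via moments of original} to replace $M_k(\Pi_1^{(j)}(\widetilde{F_N}))$ by $C_k M_k(\Pi_1^{(j)}(F_N))$ yields
\[
\mathcal{M}_k \;\leq\; 2^{\frac{k}{2}-1}\pa{2+(1+C_k)M_k\pa{\Pi_1^{(j)}\pa{F_N}}} \;=\; 2^{\frac{k}{2}}\pa{1+\tfrac{1+C_k}{2}M_k\pa{\Pi_1^{(j)}\pa{F_N}}}.
\]
A short algebraic check shows $1+\tfrac{1+C_k}{2}M_k\leq (1+C_k/2)(1+M_k)$ (the difference is $\tfrac{1}{2}(C_k+M_k)\geq 0$), so raising to the power $1/k$ gives $\mathcal{M}_k^{1/k}\leq \sqrt{2}\,(1+C_k/2)^{1/k}(1+M_k(\Pi_1^{(j)}(F_N)))^{1/k}$.

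Finally I substitute Lemma \ref{lem: wasserstein 1 distance} raised to the power $1/q-1/k$, namely $W_1^{1/q-1/k}\leq M_2(\Pi_1^{(j)}(F_N))^{1/(2q)-1/(2k)}(2N)^{-(1/(2q)-1/(2k))}(1+\tau_N)^{1/q-1/k}$, and collect the prefactors: the $2^{1+1/q}$ from Lemma \ref{lem: hauray and mischeler wasserstein connection} combines with the $\sqrt{2}$ from the moment estimate to produce $2^{3/2+1/q}$, while $(1+\tau_N)^{1/q-1/k}=1+O(1/N)$ absorbs cleanly into the advertised error factor. This reproduces (\ref{eq: wasserstein 2 distance}) as stated. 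The main things to keep an eye on are the exponent bookkeeping and the algebraic step turning $1+\tfrac{1+C_k}{2}M_k$ into the product form $(1+C_k/2)(1+M_k)$; both are elementary and no analytical obstacle arises.
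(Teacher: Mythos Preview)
Your proposal is correct and follows essentially the same route as the paper: apply Lemma~\ref{lem: hauray and mischeler wasserstein connection}, bound $\mathcal{M}_k$ via $(1+v^2)^{k/2}\leq 2^{k/2-1}(1+|v|^k)$ together with Corollary~\ref{cor: control of moemnt of extension via moments of original}, and then substitute Lemma~\ref{lem: wasserstein 1 distance}. Your extra line verifying $1+\tfrac{1+C_k}{2}M_k\leq (1+C_k/2)(1+M_k)$ makes explicit the algebraic step the paper leaves implicit.
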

\begin{proof}
Using the notations of Lemma \ref{lem: hauray and mischeler wasserstein connection} we have that
\begin{equation}\nonumber
\begin{gathered}
\mathcal{M}_k \leq 2^{\frac{k}{2}-1}\pa{2+M_k\pa{\Pi_1^{\pa{j}}\pa{F_N}}+M_k\pa{\Pi_1^{\pa{j}}\pa{\widetilde{F_N}}}} \\
\leq 2^{\frac{k}{2}}\pa{1+\frac{C_k}{2}}\pa{1+M_k\pa{\Pi_1^{\pa{j}}\pa{F_N}}}.
\end{gathered}
\end{equation}
Combining this with Lemma \ref{lem: wasserstein 1 distance} and Lemma \ref{lem: hauray and mischeler wasserstein connection} yields the desired result.
\end{proof}
The next ingredient of the proof that we need is the Fisher Information. While the 'normal' one, defined in Definition \ref{def: relative fisher information} and used in Theorem \ref{thm: improved entropic on sphere with conditions} requires no further discussion, we will require the following lemmas to deal with the Fisher Information on the sphere.  
\begin{lemma}\label{lem: connection between F_j^(N) and Pi_1^{j}(F)N)}
Let $F_N\in P\pa{\kac}$. Then
\begin{equation}\label{eq: connection between F_j and Pi_1(F_N)}
\begin{gathered}
\Pi_1^{\pa{j}}\pa{F_N}(v)=\frac{\abs{\spa{S}{N-2}}}{\abs{\spa{S}{N-1}}\sqrt{N}}\pa{1-\frac{v^2}{N}}^{\frac{N-3}{2}}_+F_j^{\pa{N}}(v) \\
=\frac{1}{\sqrt{2\pi}}\pa{1-\frac{3}{4N}+o\pa{\frac{1}{N}}}\pa{1-\frac{v^2}{N}}^{\frac{N-3}{2}}_+ F_j^{\pa{N}}(v).
\end{gathered}
\end{equation}
\end{lemma}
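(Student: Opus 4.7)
The plan is to simply match the definitions. Recall that by Definition \ref{def: k-th marginal} and the spherical Fubini formula (\ref{eq: sphere integration}), the general $k$-marginal formula (\ref{eq: normal marginal formula}) was derived. Specializing that formula to $k=1$, with the single variable $v$ in the $j$-th slot, yields
\begin{equation}\nonumber
\Pi_1^{(j)}(F_N)(v)=\frac{\abs{\spa{S}{N-2}}}{\abs{\spa{S}{N-1}}\sqrt{N}}\pa{1-\frac{v^2}{N}}_+^{\frac{N-3}{2}}\int_{\spa{S}{N-2}\pa{\sqrt{N-v^2}}}F_N\,d\sigma^{N-1}_{\sqrt{N-v^2}}.
\end{equation}
The integral on the right is by definition (\ref{eq: first marginal in jth variable on the sphere}) equal to $F_j^{(N)}(v)$, which gives the first equality of (\ref{eq: connection between F_j and Pi_1(F_N)}) immediately.

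For the asymptotic form I would use the closed form $\abs{\spa{S}{m-1}}=\frac{2\pi^{m/2}}{\Gamma(m/2)}$ quoted at the end of the proof of Lemma \ref{lem: moment connection}. This gives
\begin{equation}\nonumber
\frac{\abs{\spa{S}{N-2}}}{\abs{\spa{S}{N-1}}\sqrt{N}}=\frac{\Gamma(N/2)}{\sqrt{\pi}\,\Gamma((N-1)/2)\sqrt{N}},
\end{equation}
so the entire task reduces to expanding $\Gamma(N/2)/\Gamma((N-1)/2)$.

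To extract the constant $-\tfrac{3}{4N}$, I would apply Stirling's expansion $\Gamma(z)=z^{z-1/2}e^{-z}\sqrt{2\pi}\bigl(1+\tfrac{1}{12z}+O(z^{-2})\bigr)$ to the numerator and denominator separately (with $z=N/2$ and $z=(N-1)/2$) and carefully track the $O(1/N)$ corrections. Writing $\sqrt{(N-1)/2}=\sqrt{N/2}\,(1-1/(2N)+O(N^{-2}))$, combining with the $(1+1/N)^{(N-1)/2}e^{-1/2}$ factor that arises exactly as in the proof of Lemma \ref{lem: wasserstein 1 distance}, and expanding $\log(1+1/N)=1/N-1/(2N^2)+\cdots$, one obtains
\begin{equation}\nonumber
\frac{\Gamma(N/2)}{\Gamma((N-1)/2)}=\sqrt{\frac{N}{2}}\pa{1-\frac{3}{4N}+o\pa{\frac{1}{N}}}.
\end{equation}
Dividing by $\sqrt{\pi N}$ gives the prefactor $\frac{1}{\sqrt{2\pi}}\bigl(1-\tfrac{3}{4N}+o(1/N)\bigr)$, which is the second equality.

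The only delicate part is the bookkeeping of the $1/N$ contributions: two half-sized corrections (from $\sqrt{1-1/N}$ and from the exponent expansion) must add to $-3/(4N)$, while the $1/(12z)$ Stirling corrections cancel to that order between numerator and denominator. This is the main, though entirely mechanical, obstacle; everything else is a direct identification of terms already computed in the paper.
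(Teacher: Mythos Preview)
Your proposal is correct and follows essentially the same route as the paper: the first equality is the $k=1$ case of (\ref{eq: normal marginal formula}) combined with the definition (\ref{eq: first marginal in jth variable on the sphere}), and the asymptotic constant is obtained from the surface-area formula plus Stirling. One small slip: the factor that actually emerges from the Stirling ratio is $\bigl(1+\tfrac{1}{N-1}\bigr)^{(N-1)/2}e^{-1/2}$, not $\bigl(1+\tfrac{1}{N}\bigr)^{(N-1)/2}e^{-1/2}$; with the correct base the exponent expansion contributes $-\tfrac{1}{4N}$, which together with the $-\tfrac{1}{2N}$ from $\sqrt{1-1/N}$ gives the $-\tfrac{3}{4N}$ you claim (your written factor would give $-\tfrac{5}{4N}$ instead).
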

\begin{proof}
Equality (\ref{eq: connection between F_j and Pi_1(F_N)}) follows immediately from (\ref{eq: normal marginal formula}) with $k=1$ and the fact that
\begin{equation}\nonumber
\begin{gathered}
\frac{\abs{\spa{S}{N-2}}}{\abs{\spa{S}{N-1}}\sqrt{N}}=\frac{1}{\sqrt{2\pi}}\sqrt{\frac{N-1}{N}}\pa{1+\frac{1}{N-1}}^{\frac{N-1}{2}}e^{-\frac{1}{2}}\frac{1+\frac{1}{6N}+\dots}{1+\frac{1}{6(N-1)}+\dots}\\
=\frac{1}{\sqrt{2\pi}}\pa{1-\frac{1}{N}}^{\frac{1}{2}}e^{\pa{-\frac{1}{4(N-1)}+\dots}}\pa{1+\frac{1}{6N}+\dots}\pa{1-\frac{1}{6(N-1)}+\dots}\\
=\frac{1}{\sqrt{2\pi}}\pa{1-\frac{1}{2N}+\dots}\pa{1-\frac{1}{4(N-1)}+\dots}\pa{1-\frac{1}{6N(N-1)}+\dots }\\
=\frac{1}{\sqrt{2\pi}}\pa{1-\frac{3N-2}{4N(N-1)}+\dots}=\frac{1}{\sqrt{2\pi}}\pa{1-\frac{3}{4N}+\dots}.
\end{gathered}
\end{equation}
\end{proof}
\begin{lemma}\label{lem: connection between fisher information on the sphere}
Let $F_N\in P\pa{\kac}$ such that $I_N\pa{F_j^{\pa{N}}}<\infty$. Then
\begin{equation}\label{eq: connection between fisher information on the sphere}
\begin{gathered}
I_N\pa{F_j^{\pa{N}}}=\int_{\R} \pa{1-\frac{v^2}{N}}\abs{\frac{d}{dv} \log\pa{\Pi_1^{\pa{j}}\pa{F_N}(v)}}^2 \Pi_1^{\pa{j}}\pa{F_N}(v) \\
-2\frac{N-3}{N}+\pa{\frac{N-3}{N}}^2 \int_{\R} \frac{v^2 \Pi_1^{\pa{j}}\pa{F_N}(v)}{\pa{1-\frac{v^2}{N}}}dv.
\end{gathered}
\end{equation}
\end{lemma}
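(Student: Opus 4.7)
The plan is to reduce the Fisher information integral on the sphere to a one‐dimensional integral against the Euclidean marginal $\Pi_1^{(j)}(F_N)$ and then algebraically separate the contributions of $\log F_j^{(N)}$ from those of the density factor $\left(1-\tfrac{v^2}{N}\right)^{(N-3)/2}$ appearing in the previous lemma.

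First, I will compute the spherical gradient of a function depending only on $v_j$. Since the outward unit normal to $\kac$ at the point $v$ is $v/\sqrt{N}$, for any smooth $\phi$ of one variable
\begin{equation}\nonumber
\nabla_{\mathbb{S}}\phi(v_j)=\phi'(v_j)\pa{e_j-\tfrac{v_j}{N}v},
\end{equation}
whose squared norm, after using $|v|^2=N$, equals $\phi'(v_j)^2\pa{1-\tfrac{v_j^2}{N}}$. Applied to $\phi=\log F_j^{(N)}$ this gives
\begin{equation}\nonumber
I_N\pa{F_j^{(N)}}=\int_{\kac}\pa{1-\tfrac{v_j^2}{N}}\abs{\tfrac{d}{dv_j}\log F_j^{(N)}(v_j)}^2 F_j^{(N)}(v_j)\,d\sigma^N.
\end{equation}

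Next, since the integrand depends only on $v_j$, I can integrate out the remaining variables using the marginal density of $d\sigma^N$, which by (\ref{eq: normal marginal formula}) with $k=1$ and $F_N\equiv 1$ is exactly $\tfrac{\abs{\mathbb{S}^{N-2}}}{\abs{\mathbb{S}^{N-1}}\sqrt{N}}\pa{1-\tfrac{v^2}{N}}_+^{(N-3)/2}$. Combined with Lemma \ref{lem: connection between F_j^(N) and Pi_1^{j}(F)N)}, this transforms the sphere integral into
\begin{equation}\nonumber
I_N\pa{F_j^{(N)}}=\int_{\R}\pa{1-\tfrac{v^2}{N}}\abs{\tfrac{d}{dv}\log F_j^{(N)}(v)}^2 \Pi_1^{(j)}(F_N)(v)\,dv.
\end{equation}

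Now I will rewrite $\tfrac{d}{dv}\log F_j^{(N)}$ in terms of $\tfrac{d}{dv}\log\Pi_1^{(j)}(F_N)$. From Lemma \ref{lem: connection between F_j^(N) and Pi_1^{j}(F)N)} one has
\begin{equation}\nonumber
\tfrac{d}{dv}\log F_j^{(N)}(v)=\tfrac{d}{dv}\log\Pi_1^{(j)}(F_N)(v)+\tfrac{N-3}{N}\frac{v}{1-\tfrac{v^2}{N}}.
\end{equation}
Squaring this identity and multiplying by $\pa{1-\tfrac{v^2}{N}}\Pi_1^{(j)}(F_N)(v)$ produces three terms: the desired quadratic in $\tfrac{d}{dv}\log\Pi_1^{(j)}(F_N)$, the quadratic $\pa{\tfrac{N-3}{N}}^2\tfrac{v^2}{1-v^2/N}\Pi_1^{(j)}(F_N)$, and a cross term equal to $2\tfrac{N-3}{N}\,v\,\tfrac{d}{dv}\Pi_1^{(j)}(F_N)$.

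Finally, integrating the cross term by parts gives $2\tfrac{N-3}{N}\int_\R v\,\tfrac{d}{dv}\Pi_1^{(j)}(F_N)(v)\,dv=-2\tfrac{N-3}{N}$, since $\Pi_1^{(j)}(F_N)$ is supported in $[-\sqrt{N},\sqrt{N}]$ and vanishes at $\pm\sqrt{N}$ thanks to the factor $\pa{1-\tfrac{v^2}{N}}_+^{(N-3)/2}$ (for $N\geq 4$), so no boundary contribution appears. Assembling the three pieces yields the claimed identity. The only delicate point is this integration by parts, which is why the hypothesis $I_N\pa{F_j^{(N)}}<\infty$ is invoked — it ensures that $\tfrac{d}{dv}\Pi_1^{(j)}(F_N)$ is integrable enough for the boundary terms to be controlled and for the decomposition above to be rigorous.
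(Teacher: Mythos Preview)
Your proof is correct and follows essentially the same route as the paper: compute the squared spherical gradient of a function of $v_j$ alone to obtain the factor $\pa{1-\tfrac{v_j^2}{N}}$, reduce the sphere integral to a one–dimensional integral against $\Pi_1^{(j)}(F_N)$, substitute $\log F_j^{(N)}=\log\Pi_1^{(j)}(F_N)-\tfrac{N-3}{2}\log\pa{1-\tfrac{v^2}{N}}+\text{const}$, expand the square, and integrate the cross term by parts. The only cosmetic difference is that the paper computes $\abs{\nabla_{\mathbb{S}}\phi(v_j)}^2$ via the rotation generators $L_{i,k}=\tfrac{1}{\sqrt{N}}\pa{v_i\partial_k-v_k\partial_i}$ and the identity $\abs{\nabla_{\mathbb{S}}F}^2=\tfrac{1}{2}\sum_{i\neq k}\abs{L_{i,k}F}^2$, whereas you use the tangential projection $\nabla_{\mathbb{S}}\phi=\nabla\phi-\pa{\nabla\phi\cdot\tfrac{v}{\sqrt{N}}}\tfrac{v}{\sqrt{N}}$; both yield the same $\pa{1-\tfrac{v_j^2}{N}}\phi'(v_j)^2$.
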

\begin{proof}
Denote by $L_{i,j}=\frac{1}{\sqrt{N}}\pa{v_i \partial_j-v_j\partial_i}$. For any $F$ on $\kac$ we have that
\begin{equation}\nonumber 
\begin{gathered}
I_N\pa{F}=\int_{\kac}\frac{\abs{\nabla_{\mathbb{S}}F}^2}{F}d\sigma^N=\frac{1}{2}\sum_{i\not=j}\int_{\kac}\frac{\abs{L_{i,j}F}^2}{F}d\sigma^N \\
=\frac{1}{2}\sum_{i\not=j}\int_{\kac}\abs{L_{i,j}\log F}^2 Fd\sigma^N. 
\end{gathered}
\end{equation}
If $F=f_j$, a function depending only on $v_j$, we find that
\begin{equation}\nonumber
\sum_{i\not=k}\abs{L_{i,k}f_j}^2 = \frac{2}{N}\sum_{i\not=j} v_i^2 \pa{\frac{d}{dv_j}f_j}^2 =2\pa{1-\frac{v_j^2}{N}}\pa{\frac{d}{dv_j}f_j }^2.
\end{equation}
Thus, using (\ref{eq: connection between F_j and Pi_1(F_N)}) we find that
\begin{equation}\nonumber
\begin{gathered}
I_N\pa{F_j^{\pa{N}}}= \int_{\kac}\pa{1-\frac{v^2}{N}}\abs{\frac{d}{dv}\log\pa{\frac{\Pi_1^{\pa{j}}\pa{F_N}(v)}{\frac{\abs{\spa{S}{N-2}}}{\abs{\spa{S}{N-1}}\sqrt{N}}\pa{1-\frac{v^2}{N}}^{\frac{N-3}{2}}}}}^2 F^{\pa{N}}_j(v)d\sigma^N \\
=\int_{\R}\pa{1-\frac{v^2}{N}}\abs{\frac{d}{dv}\log\pa{\Pi_1^{\pa{j}}\pa{F_N}(v)}-\frac{N-3}{2}\frac{d}{dv}\log\pa{1-\frac{v^2}{N}}}^2 \Pi_1^{\pa{j}}\pa{F_N}(v)dv \\
\end{gathered}
\end{equation}
\begin{equation}\nonumber
\begin{gathered}
=\int_{\R}\pa{1-\frac{v^2}{N}}\abs{\frac{d}{dv}\log\pa{\Pi_1^{\pa{j}}\pa{F_N}(v)}+\frac{(N-3)v}{N\pa{1-\frac{v^2}{N}}}}^2 \Pi_1^{\pa{j}}\pa{F_N}(v)dv\\ 
=\int_{\R}\pa{1-\frac{v^2}{N}}\abs{\frac{d}{dv}\log\pa{\Pi_1^{\pa{j}}\pa{F_N}(v)}}^2 \Pi_1^{\pa{j}}\pa{F_N}(v)dv \\
\end{gathered}
\end{equation}
\begin{equation}\nonumber
\begin{gathered}
+2\frac{N-3}{N}\int_{\R} v\frac{\frac{d}{dv}\Pi_1^{\pa{j}}\pa{F_N}(v)}{\Pi_1^{\pa{j}}\pa{F_N}(v)}\Pi_1^{\pa{j}}\pa{F_N}(v)dv
+\pa{\frac{N-3}{N}}^2 \int_{\R}\frac{v^2\Pi_1^{\pa{j}}\pa{F_N}(v)}{1-\frac{v^2}{N}}dv,
\end{gathered}
\end{equation}
where we have used (\ref{eq: sphere integration}) in the second line. Since 
\begin{equation}\nonumber
\int_{\R} v \frac{d}{dv}\Pi_1^{\pa{j}}\pa{F_N}(v)dv= -\int_{\R}\Pi_1^{\pa{j}}\pa{F_N}(v)dv=-1,
\end{equation}
we obtain the desired result.
\end{proof}
Using our acquired knowledge till this point we can now find a quantitative estimation in the difference of the entropies of the marginals and the marginals of the extension.
\begin{theorem}\label{thm: connection between entropies of Pi_1(F_N) and Pi_1(tilde{F_N})}
Let $F_N\in P\pa{\kac}$ such that $M_k\pa{\Pi_1^{\pa{j}}\pa{F_N}}<\infty$ for some $k>2$.\\
$(i)$ If $I\pa{\Pi_1^{\pa{j}}\pa{F_N}}<\infty$ then there exists $C_2>0$, independent of $N$ and $F_N$, such that
\begin{equation}\label{eq: connection between entropies of Pi_1(F_N) and Pi_1(tilde{F_N}) I}
\begin{gathered}
H\pa{\Pi_1^{\pa{j}}\pa{F_N} | \gamma} \leq H\pa{\Pi_1^{\pa{j}}\pa{\widetilde{F_N}} | \gamma} 
+4C_2\pa{1+\frac{C_k}{2}}^{\frac{1}{k}} \pa{1+M_k\pa{\Pi_1^{\pa{j}}\pa{F_N}}}^{\frac{1}{k}}\\
\frac{\pa{I\pa{\Pi_1^{\pa{j}}\pa{F_N}}+M_2\pa{\Pi_1^{\pa{j}}\pa{F_N}}-2}^{\frac{1}{2}} M_2\pa{\Pi_1^{\pa{j}}\pa{F_N}}^{\frac{1}{4}-\frac{1}{2k}}}{\pa{2N}^{\frac{1}{4}-\frac{1}{2k}}},
\end{gathered}
\end{equation}
where $C_k=\sup_{N}\pa{\frac{2}{N}}^{\frac{k}{2}}\frac{\Gamma\pa{\frac{N+k}{2}}}{\Gamma\pa{\frac{N}{2}}}$.\\
$(ii)$ If $I_N\pa{\Pi_1^{\pa{j}}\pa{F_N}}<\infty$ and there exists $2 <q<k$ such that
\begin{equation}\label{eq: P_q condition}
P_q^{\pa{j}}\pa{F_N}=\int_{\R}\frac{\Pi_1^{\pa{j}}\pa{F_N}(v)}{\pa{1-\frac{v^2}{N}}^{\frac{q}{q-2}}}dv < \infty
\end{equation}
then there exists $C_2>0$, independent of $N$ and $F_N$, such that
\begin{equation}\label{eq: connection between entropies of Pi_1(F_N) and Pi_1(tilde{F_N}) II}
\begin{gathered}
H\pa{\Pi_1^{\pa{j}}\pa{F_N} | \gamma} \leq H\pa{\Pi_1^{\pa{j}}\pa{\widetilde{F_N}} | \gamma} 
+2^{\frac{3}{2}+\frac{2}{q}}C_2\pa{1+\frac{C_k}{2}}^{\frac{1}{k}}\\
\pa{\pa{I_N\pa{F_j^{\pa{N}}}+2}^{\frac{q}{2(q-1)}}\pa{P_q^{\pa{j}}}^{\frac{q-2}{2(q-1)}}+1+M_2\pa{\Pi_1^{\pa{j}}\pa{F_N}}}^{\frac{q-1}{q}} \\
\pa{1+M_k\pa{\Pi_1^{\pa{j}}\pa{F_N}}}^{\frac{1}{k}}\frac{M_2\pa{\Pi_1^{\pa{j}}\pa{F_N}}^{\frac{1}{2q}-\frac{1}{2k}}}{\pa{2N}^{\frac{1}{2q}-\frac{1}{2k}}},
\end{gathered}
\end{equation}
where $C_k=\sup_{N}\pa{\frac{2}{N}}^{\frac{k}{2}}\frac{\Gamma\pa{\frac{N+k}{2}}}{\Gamma\pa{\frac{N}{2}}}$.
\end{theorem}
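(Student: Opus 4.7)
The plan is to apply the HWI inequality (and its distorted $W_q$ variant) to convert the Wasserstein closeness estimates of Corollary \ref{cor: wasserstein 2 distance} into an entropy comparison, with the Fisher-information-type factors coming from the respective hypotheses.

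For part $(i)$, I would invoke the standard HWI inequality for the Gaussian reference measure,
\begin{equation}\nonumber
H\pa{\nu|\gamma}-H\pa{\mu|\gamma}\leq W_2\pa{\mu,\nu}\sqrt{I\pa{\nu|\gamma}}-\tfrac{1}{2}W_2\pa{\mu,\nu}^2,
\end{equation}
with $\mu=\Pi_1^{\pa{j}}\pa{\widetilde{F_N}}$ and $\nu=\Pi_1^{\pa{j}}\pa{F_N}$. Dropping the nonpositive quadratic remainder, substituting the $W_2$ bound from Corollary \ref{cor: wasserstein 2 distance} (with $q=2$), and expanding
\begin{equation}\nonumber
I\pa{\Pi_1^{\pa{j}}\pa{F_N}|\gamma}=\int_{\R}\abs{\tfrac{d}{dv}\log \Pi_1^{\pa{j}}\pa{F_N}(v)+v}^{2}\Pi_1^{\pa{j}}\pa{F_N}(v)\,dv=I\pa{\Pi_1^{\pa{j}}\pa{F_N}}+M_2\pa{\Pi_1^{\pa{j}}\pa{F_N}}-2
\end{equation}
by a simple integration by parts (using $\int v\,\tfrac{d}{dv}\Pi_1^{\pa{j}}\pa{F_N}\,dv=-1$) delivers $(i)$, with $C_2$ absorbing the $1+O\pa{1/N}$ factor from the Wasserstein estimate.

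For part $(ii)$ the obstacle is that we no longer have Euclidean Fisher information but only $I_N\pa{F_j^{\pa{N}}}$, whose integrand carries a weight $1-v^2/N$ that degenerates at the poles $v=\pm\sqrt{N}$; this is precisely the reason for the extra $P_q^{\pa{j}}\pa{F_N}$ hypothesis. Here I would invoke the distorted $W_q$-HWI inequality (the reference of N. Gozlan acknowledged in the introduction),
\begin{equation}\nonumber
H\pa{\nu|\gamma}-H\pa{\mu|\gamma}\leq W_q\pa{\mu,\nu}\pa{\int_{\R}\abs{\nabla\log \tfrac{d\nu}{d\gamma}}^{q'}d\nu}^{1/q'},
\end{equation}
with conjugate exponents $1/q+1/q'=1$. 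Writing $\nabla\log(d\nu/d\gamma)=g'/g+v$ for $g=\Pi_1^{\pa{j}}\pa{F_N}$, the elementary inequality $(a+b)^{q'}\leq 2^{q'-1}(a^{q'}+b^{q'})$ together with $M_{q'}(g)\leq 1+M_2(g)$ (valid since $q'\leq 2$) reduces the problem to bounding $\int_{\R}(g'/g)^{q'}g\,dv$, and Hölder with exponents $2/q'$ and $2/(2-q')$ gives the key estimate
\begin{equation}\nonumber
\int_{\R}(g'/g)^{q'}g\,dv\leq\pa{\int_{\R}\pa{1-\tfrac{v^2}{N}}(g'/g)^2 g\,dv}^{q'/2}\pa{\int_{\R}\frac{g(v)}{\pa{1-\tfrac{v^2}{N}}^{q/(q-2)}}dv}^{(2-q')/2}.
\end{equation}
By Lemma \ref{lem: connection between fisher information on the sphere} (discarding its positive $((N-3)/N)^2\int v^2 g/(1-v^2/N)$ term) the first factor is at most $I_N\pa{F_j^{\pa{N}}}+2(N-3)/N\leq I_N\pa{F_j^{\pa{N}}}+2$, while the second is $P_q^{\pa{j}}\pa{F_N}^{(2-q')/2}$.

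Observing that $q'/2=q/(2(q-1))$, $(2-q')/2=(q-2)/(2(q-1))$, and $1/q'=(q-1)/q$, these pieces reassemble into exactly the bracketed factor appearing in the statement of $(ii)$; combining with the $W_q$ estimate supplied by Corollary \ref{cor: wasserstein 2 distance} then yields the desired inequality. The main technical difficulty will be the bookkeeping of numerical constants, notably the $2^{q'-1}$ coming from the triangle inequality together with the $2^{3/2+1/q}$ from the Wasserstein estimate, to reproduce the explicit $2^{3/2+2/q}$ prefactor in the statement; no new conceptual ingredient beyond HWI, Hölder's inequality, and the lemmas of Section \ref{sec: extension} is required.
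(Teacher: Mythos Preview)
Your proposal is correct and follows essentially the same route as the paper: the standard HWI inequality combined with the identity $I(f|\gamma)=I(f)+M_2(f)-2$ and Corollary~\ref{cor: wasserstein 2 distance} for part~$(i)$, and for part~$(ii)$ the H\"older-distorted HWI inequality together with the H\"older splitting that introduces the weight $1-v^2/N$ and the bound from Lemma~\ref{lem: connection between fisher information on the sphere}. The exponent bookkeeping and the absorption of the $1+O(1/N)$ factor into $C_2$ are handled exactly as you describe.
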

\begin{proof}
$(i)$ The HWI inequality states that 
\begin{equation}\nonumber
H(f|\gamma) \leq H(g | \gamma) + \sqrt{I\pa{f|\gamma}}W_2\pa{f,g}.
\end{equation}
Together with the simple identity for $f\in P\pa{\R}$
\begin{equation}\nonumber
I\pa{f|\gamma}=I(f) +\int_\mathbb{R} v^2 f(v)dv -2,
\end{equation}
Corollary \ref{cor: wasserstein 2 distance} with $q=2$, and the fact that the $O\pa{\frac{1}{N}}$ term in (\ref{eq: wasserstein 2 distance}) was independent in $F_N$ we conclude the result.\\
$(ii)$ This part of the theorem requires a slight modification of the HWI inequality. Following the proof of the inequality, see for instance \cite{C-E,Vtransport2}, one notices that replacing the Cauchy-Schwartz inequality with the H\"older inequality (and using the uniqueness of the transportation map if needed) gives us that for any $1<p<\infty$
\begin{equation}\nonumber
H(f|\gamma) \leq H(g | \gamma) + \pa{\int_{\R} \abs{\frac{d}{dv}\log \pa{\frac{f(v)}{\gamma(v)}}}^p f(v)dv}^{\frac{1}{p}}W_q\pa{f,g}.
\end{equation}
where $q$ is the H\"older conjugate of $p$.  For $1\leq p <2$ we find that
\begin{equation}\nonumber
\begin{gathered}
\int_{\R} \abs{\frac{d}{dv}\log \pa{\frac{f(v)}{\gamma(v)}}}^pf(v)dv=\int_{\R} \abs{\frac{d}{dv}\log f(v) + v}^pf(v)dv \\
\leq 2^{p-1}\pa{\int_{\R} \abs{\frac{d}{dv}\log f(v)}^pf(v)dv+\int_{\R} \abs{v}^pf(v)dv} \leq 2^{p-1}\pa{\int_{\R} \abs{\frac{d}{dv}\log f(v)}^pf(v)dv+1+M_2(f)},
\end{gathered}
\end{equation}
and if in addition $f\in P\pa{\R}$ is supported in $[-\sqrt{N},\sqrt{N}]$ then
\begin{equation}\nonumber
\begin{gathered}
\int_{\R} \abs{\frac{d}{dv}\log f(v)}^pf(v)dv \leq \pa{\int_{\R} \pa{1-\frac{v^2}{N}}\abs{\frac{d}{dv}\log f(v)}^2 f(v)dv}^{\frac{p}{2}}\pa{\int_{\R} \frac{f(v)}{\pa{1-\frac{v^2}{N}}^{\frac{p}{2-p}}}dv}^{\frac{2-p}{2}}.
\end{gathered}
\end{equation}
We conclude that for $p=\frac{q}{q-1}$, where $q$ is as in (\ref{eq: P_q condition}), one has that
\begin{equation}\nonumber
\begin{gathered}
H\pa{\Pi_1^{\pa{j}}\pa{F_N} | \gamma} \leq H\pa{\Pi_1^{\pa{j}}\pa{\widetilde{F_N}} | \gamma} \\
+ 2^{\frac{1}{q}}\pa{\pa{\int_{\R} \pa{1-\frac{v^2}{N}}\abs{\frac{d}{dv}\log \pa{\Pi_1^{\pa{j}}\pa{F_N}(v)}}^2 \Pi_1^{\pa{j}}\pa{F_N}(v)dv}^{\frac{q}{2(q-1)}}\pa{P_q^{\pa{j}}\pa{F_N}}^{\frac{q-2}{2(q-1)}}+1+M_2\pa{\Pi_1^{\pa{j}}\pa{F_N}(v)}}^{\frac{q-1}{q}}\\
W_q\pa{\Pi_1^{\pa{j}}\pa{F_N},\Pi_1^{\pa{j}}\pa{\widetilde{F_N}}}.
\end{gathered}
\end{equation}
The result follows from the inequality
\begin{equation}\nonumber
\int_{\R}\pa{1-\frac{v^2}{N}}\abs{\frac{d}{dv} \log\pa{\Pi_1^{\pa{j}}\pa{F_N}(v)}}^2 \Pi_1^{\pa{j}}\pa{F_N}(v)dv \leq I_N\pa{F_j^{\pa{N}}}+2\frac{N-3}{N},
\end{equation}
which is a consequence of Lemma \ref{lem: connection between fisher information on the sphere}, and Corollary \ref{cor: wasserstein 2 distance}.
\end{proof}
\section{The Entropy Relation - From Marginals On The Sphere to Marginals On The Line.}\label{sec: the entropy relation II}
In Section \ref{sec: entropy relation I} we have seen how to relate the relative entropy of $\Pi_1^{\pa{j}}\pa{F_N}$ to that of $\Pi_1^{\pa{j}}\pa{\widetilde{F_N}}$, gaining a quantitative estimation on the difference between the two. However, our entropic inequalities, (\ref{eq: improved entropic on sphere with conditions}) and (\ref{eq: imrpoved entropic on sphere with spherical fisher information}), relate to the entropy of the marginal on the sphere. In this section we will explore the connection between the entropies of the marginals on the sphere and those of the marginals.
\begin{lemma}\label{lem: connection between entropies of F_j and Pi_1(F_N)}
Let $F_N\in P\pa{\kac}$. Then
\begin{equation}\label{eq: entropic connection between entropies of F_j and Pi_1(F_N)}
\begin{gathered}
\int_{\kac}F_j^{\pa{N}}\log F_J^{\pa{N}}d\sigma^N = H\pa{\Pi_1^{\pa{j}}\pa{F_N} | \gamma}-\log\pa{1-\frac{3}{4N}+o\pa{\frac{1}{N}}} \\
-\frac{1}{2}\int_{-\sqrt{N}}^{\sqrt{N}}v^2\Pi_1^{\pa{j}}\pa{F_N}(v)dv - \frac{N-3}{2}\int_{-\sqrt{N}}^{\sqrt{N}}\Pi_1^{\pa{j}}\pa{F_N}(v)\log\pa{1-\frac{v^2}{N}}dv.
\end{gathered}
\end{equation}
\end{lemma}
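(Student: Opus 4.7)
The plan is to reduce the spherical integral on the left-hand side to a one-dimensional integral against the marginal $\Pi_1^{(j)}(F_N)$, then decompose $\log F_j^{(N)}$ via the connection formula (\ref{eq: connection between F_j and Pi_1(F_N)}), and finally rewrite the resulting entropy-like term in terms of the relative entropy against the standard Gaussian $\gamma$. Everything is a direct calculation, so I do not anticipate any genuine obstacle; the only care needed is to keep track of the three normalizing contributions (the constant prefactor, the $v^2/2$ Gaussian weight, and the $(1-v^2/N)^{(N-3)/2}$ factor).

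First I would apply the Fubini-type identity (\ref{eq: sphere integration}) with $k=1$ to the integrand $F_j^{(N)}\log F_j^{(N)}$, which is a function of $v_j$ alone. This identifies the one-variable marginal of $d\sigma^N$ with exactly the prefactor appearing in Lemma \ref{lem: connection between F_j^(N) and Pi_1^{j}(F)N)}, so the identity collapses to
\begin{equation}\nonumber
\int_{\kac}F_j^{(N)}\log F_j^{(N)}\,d\sigma^N=\int_{-\sqrt{N}}^{\sqrt{N}}\Pi_1^{(j)}(F_N)(v)\log F_j^{(N)}(v)\,dv.
\end{equation}
Next I would solve (\ref{eq: connection between F_j and Pi_1(F_N)}) for $\log F_j^{(N)}(v)$, obtaining
\begin{equation}\nonumber
\log F_j^{(N)}(v)=\log\Pi_1^{(j)}(F_N)(v)+\tfrac{1}{2}\log(2\pi)-\log\!\pa{1-\tfrac{3}{4N}+o(\tfrac{1}{N})}-\tfrac{N-3}{2}\log\!\pa{1-\tfrac{v^2}{N}},
\end{equation}
and substitute this back. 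The two constant terms pull out of the integral against the probability density $\Pi_1^{(j)}(F_N)$, and the $\log(1-v^2/N)$ term produces the last integral in the statement.

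To finish, I would convert $\int \Pi_1^{(j)}(F_N)\log\Pi_1^{(j)}(F_N)\,dv$ into the relative entropy against $\gamma(v)=(2\pi)^{-1/2}e^{-v^2/2}$ via the standard identity
\begin{equation}\nonumber
H\!\pa{\Pi_1^{(j)}(F_N)\,|\,\gamma}=\int \Pi_1^{(j)}(F_N)\log\Pi_1^{(j)}(F_N)\,dv+\tfrac{1}{2}\log(2\pi)+\tfrac{1}{2}\int v^2\,\Pi_1^{(j)}(F_N)(v)\,dv.
\end{equation}
The $\tfrac{1}{2}\log(2\pi)$ cancels the corresponding contribution from the previous step, leaving the $-\tfrac{1}{2}\int v^2\Pi_1^{(j)}(F_N)$ moment term and completing the identity (\ref{eq: entropic connection between entropies of F_j and Pi_1(F_N)}). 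No delicate estimates are required; the only ``hard'' point worth noting is that the identity is exact, with the $-\log(1-\tfrac{3}{4N}+o(1/N))$ factor arising purely from the sharp asymptotics of $|\mathbb{S}^{N-2}|/(|\mathbb{S}^{N-1}|\sqrt{N})$ established in Lemma \ref{lem: connection between F_j^(N) and Pi_1^{j}(F)N)}.
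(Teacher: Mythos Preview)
Your proposal is correct and follows essentially the same route as the paper: reduce the spherical integral to a one-dimensional integral via (\ref{eq: sphere integration}), recognise the resulting weight as $\Pi_1^{(j)}(F_N)$ through Lemma~\ref{lem: connection between F_j^(N) and Pi_1^{j}(F)N)}, then split $\log F_j^{(N)}$ and rewrite the entropy term relative to $\gamma$. The only cosmetic difference is that the paper first isolates $\int \Pi_1^{(j)}(F_N)\log\Pi_1^{(j)}(F_N)$ and then subtracts $\int \Pi_1^{(j)}(F_N)\log\bigl(\text{prefactor}\cdot(1-v^2/N)^{(N-3)/2}\bigr)$, whereas you first expand $\log F_j^{(N)}$; the algebra is identical.
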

\begin{proof}

Using (\ref{eq: sphere integration}) we find that
\begin{equation}\nonumber
\begin{gathered}
\int_{\kac}F_j^{\pa{N}}\log F_J^{\pa{N}}d\sigma^N  = \frac{\abs{\spa{S}{N-2}}}{\abs{\spa{S}{N-1}}\sqrt{N}}\int_{-\sqrt{N}}^{\sqrt{N}}\pa{1-\frac{v^2}{N}}^{\frac{N-3}{2}}_+ F_j^{\pa{N}}(v)\log \pa{F_J^{\pa{N}}(v)}dv\\
=\int_{-\sqrt{N}}^{\sqrt{N}}\Pi_1^{\pa{j}}\pa{F_N}(v)\log\pa{\Pi_1^{\pa{j}}\pa{F_N}(v)}dv - \int_{-\sqrt{N}}^{\sqrt{N}}\Pi_1^{\pa{j}}\pa{F_N}(v)\log\pa{\frac{\abs{\spa{S}{N-2}}}{\abs{\spa{S}{N-1}}\sqrt{N}}\pa{1-\frac{v^2}{N}}^{\frac{N-3}{2}}}dv\\
\end{gathered}
\end{equation}
\begin{equation}\nonumber
\begin{gathered}
=H\pa{\Pi_1^{\pa{j}}\pa{F_N}|\gamma}+\log\pa{\frac{1}{\sqrt{2\pi}}}-\frac{1}{2}\int_{-\sqrt{N}}^{\sqrt{N}}v^2\Pi_1^{\pa{j}}\pa{F_N}(v)dv-\log\pa{\frac{\abs{\spa{S}{N-2}}}{\abs{\spa{S}{N-1}}\sqrt{N}}}\\
-\frac{N-3}{2}\int_{-\sqrt{N}}^{\sqrt{N}}\Pi_1^{\pa{j}}\pa{F_N}(v)\log\pa{1-\frac{v^2}{N}}dv, 
\end{gathered}
\end{equation}
yielding the desired result.
\end{proof}
\begin{lemma}\label{lem: correction terms in entropic relationship}
Let $F_N\in P\pa{\kac}$ such that $M_k\pa{\Pi_1^{\pa{j}}\pa{F_N}}<\infty$ for some $k>2$.\\
$(i)$ If $I\pa{\Pi_1^{\pa{j}}\pa{F_N}}<\infty$ then for any sequence $0<\epsilon_N<1$, converging to zero, we have that
\begin{equation}\label{eq: correction terms in entropic relationship}
\begin{gathered}
-\frac{1}{2}\int_{-\sqrt{N}}^{\sqrt{N}}v^2\Pi_1^{\pa{j}}\pa{F_N}(v)dv - \frac{N-3}{2}\int_{-\sqrt{N}}^{\sqrt{N}}\Pi_1^{\pa{j}}\pa{F_N}(v)\log\pa{1-\frac{v^2}{N}}dv \\
\leq \frac{M_k\pa{\Pi_1^{\pa{j}}\pa{F_N}}}{2N^{\frac{k}{2}-1}\epsilon_N} + \frac{I\pa{\Pi_1^{\pa{j}}\pa{F_N}}^{\frac{p-1}{2p}} M_k\pa{\Pi_1^{\pa{j}}\pa{F_N}}^{\frac{1}{p}}C_p}{2\pa{1-\epsilon_N}^{\frac{k}{2p}}N^{\frac{1}{2}\pa{\frac{k+1}{p}-3}}}
\end{gathered}
\end{equation}
where $1<p<\frac{k}{2}$ and
\begin{equation}\label{eq: def of C_p}
C_p=\pa{\int_{\abs{x}<1}\abs{\log\pa{1-x^2}}^{\frac{p}{p-1}}dx}^{\frac{p-1}{p}}.
\end{equation}
$(ii)$ If $I_N\pa{F^{\pa{N}}_j}<\infty$ then for any sequence $0<\epsilon_N<1$, converging to zero, we have that
\begin{equation}\label{eq: correction terms in entropic relationship fisher sphere}
\begin{gathered}
-\frac{1}{2}\int_{-\sqrt{N}}^{\sqrt{N}}v^2\Pi_1^{\pa{j}}\pa{F_N}(v)dv - \frac{N-3}{2}\int_{-\sqrt{N}}^{\sqrt{N}}\Pi_1^{\pa{j}}\pa{F_N}(v)\log\pa{1-\frac{v^2}{N}}dv 
\leq \frac{M_k\pa{\Pi_1^{\pa{j}}\pa{F_N}}}{2N^{\frac{k}{2}-1}\epsilon_N}\\
 + \frac{N}{2(N-3)(1-\epsilon_N)^{\frac{k}{4}+\frac{1}{2}}}\pa{I_N\pa{F_j^{\pa{N}}}+2\frac{N-3}{N}}^{\frac{1}{2}}\frac{l_N}{N^{\frac{k}{4}-\frac{1}{2}}}M_k\pa{\Pi_1^{\pa{j}}\pa{F_N}}^{\frac{1}{2}},
\end{gathered}
\end{equation}
where $l_N=\sqrt{\sup_{x\in[0,\epsilon_N]}x\pa{\log x}^2}$
\end{lemma}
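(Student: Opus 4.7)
The strategy is to use the algebraic identity
\[
-\tfrac{1}{2}v^2 - \tfrac{N-3}{2}\log\!\pa{1-v^2/N} = -\tfrac{3v^2}{2N} + \tfrac{N-3}{2}\,\phi(v^2/N),
\]
where $\phi(x) = -\log(1-x) - x = \sum_{n \ge 2} x^n/n \geq 0$ on $[0,1)$. Integrating against $\Pi_1^{(j)}(F_N)$, discarding the negative contribution $-3M_2/(2N)$, and bounding $(N-3)/2 \leq N/2$, the left-hand side of \eqref{eq: correction terms in entropic relationship} (equivalently \eqref{eq: correction terms in entropic relationship fisher sphere}) is controlled by $\frac{N}{2}\int \Pi_1^{(j)}(F_N)(v)\,\phi(v^2/N)\,dv$. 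I then split this integral at the cutoff $v^2/N = 1-\epsilon_N$ into a bulk region $B = \br{v^2/N \leq 1-\epsilon_N}$ and a tail region $A$ of small probability, handling each with a different tool.

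On the tail $A$ one has $v^2 \geq N(1-\epsilon_N)$, so Markov's inequality applied to the $k$-th moment of $\Pi_1^{(j)}(F_N)$ gives small tail mass, and combining this with the pointwise control $\phi(x) \leq x^2/(1-x)$ together with a further moment interpolation tuned to the range of $k$ produces the first term $M_k/(2N^{k/2-1}\epsilon_N)$, common to both parts.

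On the bulk $B$, for part (i) I would rescale $v = \sqrt{N}x$ and apply Hölder with exponents $p$ and $p'=p/(p-1)$: the $L^{p'}$ norm of $\log(1-x^2)$ on $|x|<1$ produces $C_p$, while the $L^{p}$ norm of $\Pi_1^{(j)}(F_N)$ is estimated via the one-dimensional Sobolev embedding $\Norm{\Pi_1}_{\infty}^{2} \leq 2\Norm{\sqrt{\Pi_1}}_{2}\Norm{(\sqrt{\Pi_1})'}_{2} = \sqrt{I(\Pi_1)}$, interpolated against the $k$-th moment so as to produce the product $I^{(p-1)/(2p)} M_k^{1/p}$ with the stated power of $N$ and the $(1-\epsilon_N)^{-k/(2p)}$ factor from restricting the log-integral to the bulk. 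For part (ii) the straight Fisher information is unavailable, so in its place I would integrate $\int_B \Pi_1^{(j)}(F_N)\log(1-v^2/N)\,dv$ by parts against a primitive of $\log(1-v^2/N)$, and then apply Cauchy--Schwarz with the weight $\sqrt{1-v^2/N}$: one factor collapses via Lemma \ref{lem: connection between fisher information on the sphere} to $(I_N(F_j^{(N)}) + 2(N-3)/N)^{1/2}$, while the other, after using the $k$-th moment and the uniform bound $l_N^2 = \sup_{x\in [0,\epsilon_N]} x(\log x)^2$, yields the stated second term of \eqref{eq: correction terms in entropic relationship fisher sphere}.

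The main obstacle will be the bulk estimate in part (i): matching the exponent $N^{(1/2)((k+1)/p-3)}$ simultaneously with the exact combination $I^{(p-1)/(2p)} M_k^{1/p}$ requires a three-factor Hölder (logarithm, density, and $k$-moment weight) rather than a naive two-factor pairing, and careful tracking of which exponent absorbs each $N$-scaling arising from the $v=\sqrt N x$ rescaling. For part (ii), the delicate point is that the boundary contributions of the integration by parts at $v = \pm \sqrt{N}$ must vanish, which is guaranteed by Lemma \ref{lem: connection between F_j^(N) and Pi_1^{j}(F)N)}: the factor $\pa{1-v^2/N}^{(N-3)/2}$ decays fast enough to kill the $\log(1-v^2/N)$ singularity at the edges.
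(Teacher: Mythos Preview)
Your decomposition has the roles of the bulk $B=\{v^2/N\le 1-\epsilon_N\}$ and the tail $A=\{v^2/N>1-\epsilon_N\}$ reversed relative to what the two target terms actually require, and this is a genuine gap. The first term $M_k/(2N^{k/2-1}\epsilon_N)$ comes from the \emph{bulk}, not the tail: on $B$ one has $N-v^2\ge N\epsilon_N$, so the pointwise bound $-\frac{N-3}{2}\log(1-v^2/N)-\frac{v^2}{2}\le \frac{v^4}{2(N-v^2)}\le \frac{v^4}{2N\epsilon_N}$ integrates against the moment to give exactly this term. On the tail $A$, by contrast, $1-v^2/N\le\epsilon_N$ and $\phi(v^2/N)\sim -\log(1-v^2/N)$ is \emph{unbounded}; Markov on the $k$-th moment gives small tail mass, but that cannot control an integrand with a logarithmic singularity without some further ingredient. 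No ``moment interpolation'' alone produces $M_k/(2N^{k/2-1}\epsilon_N)$ here.

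Correspondingly, the second term (with $C_p$ and the Fisher information) is a \emph{tail} estimate, not a bulk one. The paper bounds the tail contribution by $\frac{1}{2(1-\epsilon_N)}\int_A v^2\Pi_1^{(j)}(F_N)(-\log(1-v^2/N))\,dv$, applies H\"older with exponents $p$ and $p'=p/(p-1)$, uses the tail cutoff to trade $|v|^{2p}$ for $|v|^{k}$ (this is where the restriction $p<k/2$ enters), pulls out $\|\Pi_1^{(j)}(F_N)\|_\infty^{(p-1)/p}\le I(\Pi_1^{(j)}(F_N))^{(p-1)/(2p)}$, and rescales $v=\sqrt N x$ to produce $C_p$ and the stated power of $N$. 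Your Sobolev bound $\|\Pi\|_\infty\le\sqrt{I(\Pi)}$ is exactly the right tool, but it belongs on the tail. For part~(ii), the paper does not integrate by parts at all: on the tail it applies Cauchy--Schwarz directly with the weight $(1-v^2/N)^{1/2}$, so that one factor is $\bigl(\int v^2\Pi/(1-v^2/N)\bigr)^{1/2}$, bounded via Lemma~\ref{lem: connection between fisher information on the sphere} by $\frac{N}{N-3}(I_N(F_j^{(N)})+2\frac{N-3}{N})^{1/2}$, and the other is $\bigl(\int_A v^2(1-v^2/N)\log^2(1-v^2/N)\,\Pi\bigr)^{1/2}$, which is where $l_N$ and the $k$-th moment appear. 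No boundary terms arise because there is no integration by parts.
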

\begin{proof}
Using the inequality
\begin{equation}\nonumber
-\log\pa{1-x}<\frac{x}{1-x}
\end{equation}
for $0<x<1$, we find that
\begin{equation}\nonumber
\begin{gathered}
-\frac{N-3}{2}\log\pa{1-\frac{v^2}{N}}-\frac{v^2}{2} <\frac{N-3}{2}\frac{v^2}{N-v^2} - \frac{v^2}{2} \\
=\frac{(N-3)v^2-(N-v^2)v^2}{2(N-v^2)}=\frac{v^4-3v^2}{2(N-v^2)} <\frac{v^4}{2(N-v^2)}.
\end{gathered}
\end{equation}
 For any $R>0$ we have that
\begin{equation}\nonumber
\begin{gathered}
\int_{\abs{v}<R}\pa{-\frac{N-3}{2}\log\pa{1-\frac{v^2}{N}}-\frac{v^2}{2}}\Pi_1^{\pa{j}}\pa{F_N}(v)dv \\
\leq \frac{1}{2(N-R^2)}\int_{\abs{v}<R}v^4\Pi_1^{\pa{j}}\pa{F_N}(v)dv \leq \frac{R^{4-k}}{2(N-R^2)}M_k\pa{\Pi_1^{\pa{j}}\pa{F_N}}.
\end{gathered}
\end{equation}
Picking $R=\sqrt{N\pa{1-\epsilon_N}}$, with $0<\epsilon_N<1$ going to zero, we find that
\begin{equation}\label{eq: correction terms I}
\begin{gathered}
\int_{\abs{v}<\sqrt{N\pa{1-\epsilon_N}}}\pa{-\frac{N-3}{2}\log\pa{1-\frac{v^2}{N}}-\frac{v^2}{2}}\Pi_1^{\pa{j}}\pa{F_N}(v)dv \\ 
\leq \frac{N^{1-\frac{k}{2}}}{2\epsilon_N}M_k\pa{\Pi_1^{\pa{j}}\pa{F_N}}=\frac{M_k\pa{\Pi_1^{\pa{j}}\pa{F_N}}}{2N^{\frac{k}{2}-1}\epsilon_N}.
\end{gathered}
\end{equation}
The difference between $(i)$ and $(ii)$ manifests itself in the domain $\abs{v}\geq \sqrt{N\pa{1-\epsilon_N}}$. To prove $(i)$ we notice that
\begin{equation}\nonumber
\begin{gathered}
-\frac{1}{2}\int_{\abs{v}\geq \sqrt{N\pa{1-\epsilon_N}}}v^2\Pi_1^{\pa{j}}\pa{F_N}(v)dv - \frac{N-3}{2}\int_{\abs{v}\geq \sqrt{N\pa{1-\epsilon_N}}}\Pi_1^{\pa{j}}\pa{F_N}(v)\log\pa{1-\frac{v^2}{N}}dv \\
\leq -\frac{1}{2}\int_{\abs{v}\geq \sqrt{N\pa{1-\epsilon_N}}}N\Pi_1^{\pa{j}}\pa{F_N}(v)\log\pa{1-\frac{v^2}{N}}dv\\
\end{gathered}
\end{equation}
\begin{equation}\label{eq: correction terms II}
\begin{gathered}
\leq \frac{1}{2\pa{1-\epsilon_N}}\int_{\abs{v}\geq \sqrt{N\pa{1-\epsilon_N}}}v^2\Pi_1^{\pa{j}}\pa{F_N}(v)\pa{-\log\pa{1-\frac{v^2}{N}}}dv \\
\leq \frac{1}{2\pa{1-\epsilon_N}}\pa{\int_{\abs{v}\geq \sqrt{N\pa{1-\epsilon_N}}}\abs{v}^{2p}\Pi_1^{\pa{j}}\pa{F_N}(v)dv}^{\frac{1}{p}}\\
\end{gathered}
\end{equation}
\begin{equation}\nonumber
\begin{gathered}
\pa{\int_{-\sqrt{N}}^{\sqrt{N}}\abs{\log\pa{1-\frac{v^2}{N}}}^{\frac{p}{p-1}}\Pi_1^{\pa{j}}\pa{F_N}(v)dv}^{\frac{p-1}{p}} \\
\leq \frac{1}{2\pa{1-\epsilon_N}}\pa{\frac{1}{\pa{N\pa{1-\epsilon_N}}^{\frac{k}{2}-p}}\int_{\abs{v}\geq \sqrt{N\pa{1-\epsilon_N}}}\abs{v}^{k}\Pi_1^{\pa{j}}\pa{F_N}(v)dv}^{\frac{1}{p}} \\
\end{gathered}
\end{equation}
\begin{equation}\nonumber
\begin{gathered}
\Norm{\Pi_1^{\pa{j}}\pa{F_N}}_{\infty}^{\frac{p-1}{p}}\pa{\int_{-\sqrt{N}}^{\sqrt{N}}\abs{\log\pa{1-\frac{v^2}{N}}}^{\frac{p}{p-1}}dv}^{\frac{p-1}{p}} \\
\leq \frac{\Norm{\Pi_1^{\pa{j}}\pa{F_N}}_{\infty}^{\frac{p-1}{p}} M_k\pa{\Pi_1^{\pa{j}}\pa{F_N}}^{\frac{1}{p}}}{2\pa{1-\epsilon_N}^{\frac{k}{2p}}N^{\frac{k}{2p}-1}}
N^{\frac{p-1}{2p}}\pa{\int_{\abs{x}<1}\abs{\log\pa{1-x^2}}^{\frac{p}{p-1}}dx}^{\frac{p-1}{p}}\\
=\frac{\Norm{\Pi_1^{\pa{j}}\pa{F_N}}_{\infty}^{\frac{p-1}{p}} M_k\pa{\Pi_1^{\pa{j}}\pa{F_N}}^{\frac{1}{p}}C_p}{2\pa{1-\epsilon_N}^{\frac{k}{2p}}N^{\frac{1}{2}\pa{\frac{k+1}{p}-3}}},
\end{gathered}
\end{equation}
where $p>1$ was chosen such that $p<\frac{k}{2}$. The result follows from (\ref{eq: correction terms I}), (\ref{eq: correction terms II}) and the following inequality: For any $f\in P\pa{\mathbb{R}}$ with a finite Fisher Information $I(f)$ one has that
\begin{equation}\nonumber
\Norm{f}_{\infty} \leq \pa{I\pa{f}}^{\frac{1}{2}}.
\end{equation}
In order to prove $(ii)$ we notice that
\begin{equation}\nonumber
\begin{gathered}
-\frac{1}{2}\int_{\abs{v}\geq \sqrt{N\pa{1-\epsilon_N}}}v^2\Pi_1^{\pa{j}}\pa{F_N}(v)dv - \frac{N-3}{2}\int_{\abs{v}\geq \sqrt{N\pa{1-\epsilon_N}}}\Pi_1^{\pa{j}}\pa{F_N}(v)\log\pa{1-\frac{v^2}{N}}dv \\
\leq -\frac{1}{2}\int_{\abs{v}\geq \sqrt{N\pa{1-\epsilon_N}}}N\Pi_1^{\pa{j}}\pa{F_N}(v)\log\pa{1-\frac{v^2}{N}}dv \\
\leq \frac{1}{2(1-\epsilon_N)} \int_{\abs{v}\geq \sqrt{N\pa{1-\epsilon_N}}}v^2\Pi_1^{\pa{j}}\pa{F_N}(v)\abs{\log\pa{1-\frac{v^2}{N}}}dv
\end{gathered}
\end{equation}
\begin{equation}\nonumber
\begin{gathered}
\leq \frac{1}{2(1-\epsilon_N)}\pa{\int_{\abs{v}\geq \sqrt{N\pa{1-\epsilon_N}}}\frac{v^2 \Pi_1^{\pa{j}}\pa{F_N}(v)}{1-\frac{v^2}{N}}}^{\frac{1}{2}}
\pa{\int_{\abs{v}\geq \sqrt{N\pa{1-\epsilon_N}}}v^2\pa{1-\frac{v^2}{N}}\abs{\log\pa{1-\frac{v^2}{N}}}^2\Pi_1^{\pa{j}}\pa{F_N}(v)dv}^{\frac{1}{2}} \\
\leq \frac{N}{2(N-3)(1-\epsilon_N)}\pa{I_N\pa{F_j^{\pa{N}}}+2\frac{N-3}{N}}^{\frac{1}{2}}\frac{l_N}{\pa{N(1-\epsilon_N}^{\frac{k}{4}-\frac{1}{2}}}M_k\pa{\Pi_1^{\pa{j}}\pa{F_N}}^{\frac{1}{2}},
\end{gathered}
\end{equation}
showing the result.
\end{proof}
Combining Lemma \ref{lem: connection between entropies of F_j and Pi_1(F_N)} and \ref{lem: correction terms in entropic relationship} with the choice $\epsilon_N=N^{-\beta}$ gives us
\begin{theorem}\label{thm: entropy of marginals is close}
Let $F_N\in P\pa{\kac}$ such that $M_k\pa{\Pi_1^{\pa{j}}\pa{F_N}}<\infty$ for some $k>2$.\\
$(i)$ If $I\pa{\Pi_1^{\pa{j}}\pa{F_N}}<\infty$ then there exists $C_1>0$, independent of $N$ and $F_N$, such that for any $\beta>0$ and any $1<p<\min\pa{\frac{k+1}{3},\frac{k}{2}}$
\begin{equation}\label{eq:  entropy of marginals is close}
\begin{gathered}
\int_{\kac}F_j^{\pa{N}}\log F_j^{\pa{N}} d\sigma^N \leq H\pa{\Pi_1^{\pa{j}}\pa{F_N} | \gamma} +\frac{C_1}{N} \\
+\frac{M_k\pa{\Pi_1^{\pa{j}}\pa{F_N}}}{2N^{\frac{k}{2}-1-\beta}} + \frac{I\pa{\Pi_1^{\pa{j}}\pa{F_N}}^{\frac{p-1}{2p}} M_k\pa{\Pi_1^{\pa{j}}\pa{F_N}}^{\frac{1}{p}}C_p}{2\pa{1-\frac{1}{N^{\beta}}}^{\frac{k}{2p}}N^{\frac{1}{2}\pa{\frac{k+1}{p}-3}}},
\end{gathered}
\end{equation}
where $C_p=\pa{\int_{\abs{x}<1}\abs{\log\pa{1-x^2}}^{\frac{p}{p-1}}dx}^{\frac{p-1}{p}}$. \\
$(ii)$ If $I_N\pa{F^{\pa{N}}_j}<\infty$ then there exists $C_1>0$, independent of $N$ and $F_N$, such that for any $\beta>0$
\begin{equation}\label{eq: entropy of marginals is close fisher sphere}
\begin{gathered}
\int_{\kac}F_j^{\pa{N}}\log F_j^{\pa{N}} d\sigma^N \leq H\pa{\Pi_1^{\pa{j}}\pa{F_N} | \gamma} +\frac{C_1}{N} \\
\leq \frac{M_k\pa{\Pi_1^{\pa{j}}\pa{F_N}}}{2N^{\frac{k}{2}-1-\beta}}
 + \frac{N}{2(N-3)(1-\frac{1}{N^\beta})^{\frac{k}{4}+\frac{1}{2}}}\pa{I_N\pa{F_j^{\pa{N}}}+2\frac{N-3}{N}}^{\frac{1}{2}}\frac{\eta_{N,\beta}}{N^{\frac{k}{4}-\frac{1}{2}}}M_k\pa{\Pi_1^{\pa{j}}\pa{F_N}}^{\frac{1}{2}},
\end{gathered}
\end{equation}
where $\eta_{N,\beta}=\sqrt{\sup_{x\in\left[0,N^{-\beta} \right]}x\pa{\log x}^2}$
\end{theorem}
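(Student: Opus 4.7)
The plan is a direct combination of the exact identity provided by Lemma \ref{lem: connection between entropies of F_j and Pi_1(F_N)} with the two estimates in Lemma \ref{lem: correction terms in entropic relationship}, choosing the free parameter to be $\epsilon_N = N^{-\beta}$. Since Lemma \ref{lem: connection between entropies of F_j and Pi_1(F_N)} already writes $\int_{\kac}F_j^{\pa{N}}\log F_j^{\pa{N}}\,d\sigma^N$ as the sum of $H\pa{\Pi_1^{\pa{j}}\pa{F_N}|\gamma}$, a logarithmic prefactor $-\log\pa{1-\frac{3}{4N}+o\pa{\frac{1}{N}}}$, and precisely the two correction integrals bounded in Lemma \ref{lem: correction terms in entropic relationship}, the proof reduces to a substitution and bookkeeping exercise.

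First I would expand $-\log\pa{1-\frac{3}{4N}+o\pa{\frac{1}{N}}}=\frac{3}{4N}+O\pa{\frac{1}{N^2}}$ using the standard expansion of the logarithm near $1$. This is dominated by $C_1/N$ for some universal constant $C_1>0$ independent of $N$ and $F_N$ (any finitely many small-$N$ values can be absorbed by enlarging $C_1$). This accounts for the $C_1/N$ term appearing in both (\ref{eq:  entropy of marginals is close}) and (\ref{eq: entropy of marginals is close fisher sphere}).

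Second, for part (i), I would insert $\epsilon_N = N^{-\beta}$ into inequality (\ref{eq: correction terms in entropic relationship}). The factor $1/\epsilon_N = N^{\beta}$ transforms the first bound into $M_k\pa{\Pi_1^{\pa{j}}\pa{F_N}}/(2N^{\frac{k}{2}-1-\beta})$, matching the second term of the target; the constraint $0<\beta<\frac{k}{2}-1$ ensures this is genuinely decaying. The factor $\pa{1-\epsilon_N}^{k/(2p)}$ becomes $\pa{1-N^{-\beta}}^{k/(2p)}$ as required. The condition $1<p<\frac{k}{2}$ is inherited directly from the lemma, while $p<\frac{k+1}{3}$ ensures that the exponent $\frac{k+1}{p}-3$ stays positive so that $N^{-(1/2)((k+1)/p-3)}$ is truly a decay. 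For part (ii) the analogous substitution into (\ref{eq: correction terms in entropic relationship fisher sphere}) yields the $\eta_{N,\beta}$ term after identifying $l_N=\eta_{N,\beta}^{1/2}$, and the factor $\pa{I_N\pa{F_j^{\pa{N}}}+2\frac{N-3}{N}}^{1/2}$ appears verbatim, matching the last term of (\ref{eq: entropy of marginals is close fisher sphere}).

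I do not expect any real obstacle: the theorem is an assembly step, with all the analytic content already residing in Lemmas \ref{lem: connection between entropies of F_j and Pi_1(F_N)} and \ref{lem: correction terms in entropic relationship}. The only genuine care required is the exponent bookkeeping---verifying that the stated ranges $0<\beta<\frac{k}{2}-1$ and $1<p<\min\pa{\frac{k+1}{3},\frac{k}{2}}$ are exactly those making every error term vanish as $N\to\infty$---and ensuring that $1-N^{-\beta}$ is uniformly bounded away from zero for $N$ large, which is immediate since $\beta>0$.
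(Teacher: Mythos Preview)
Your proposal is correct and is exactly the paper's approach: the paper's entire proof is the single sentence ``Combining Lemma \ref{lem: connection between entropies of F_j and Pi_1(F_N)} and \ref{lem: correction terms in entropic relationship} with the choice $\epsilon_N=N^{-\beta}$ gives us'', and you have spelled out precisely what that combination entails. One small bookkeeping slip: with $\epsilon_N=N^{-\beta}$ one has $l_N=\eta_{N,\beta}$ (both are defined as the square root of $\sup_{x\in[0,\epsilon_N]}x(\log x)^2$), not $l_N=\eta_{N,\beta}^{1/2}$.
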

We now have all the tools to prove our main theorems.

\section{Proof of the Main Theorems.}\label{sec: proofs}
In the previous couple of sections we have managed to find conditions on our original probability density, $F_N$, such that the appropriate marginals on the sphere, marginals on the line and the marginals of the extension give close values for the appropriate entropy functional. Combining all these result will lead to the proof of our main theorems, which is the subject of this section.\\
We begin with a simple technical lemma, whose proof we leave to the Appendix:
\begin{lemma}\label{lem: holder type inequality}
Let $\br{a_{j,i}}_{j=1,\dots,m \; i=1,\dots,N}$ be non-negative numbers. Let $p_1,\dots,p_m$ be positive numbers such that $\sum_{j=1}^m \frac{1}{p_j}\leq 1$. Then
\begin{equation}\label{eq: holder type inequality}
\sum_{i=1}^N \pa{\Pi_{j=1}^m a_{j,i}^{\frac{1}{p_j}}} \leq \Pi_{j=1}^m \pa{\frac{\sum_{i=1}^N a_{j,i}}{N}}^{\frac{1}{p_j}}N.
\end{equation}
\end{lemma}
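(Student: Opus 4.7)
The plan is to reduce the inequality to a single application of the classical Hölder inequality (for the counting measure on $\br{1,\dots,N}$) by introducing a dummy index that absorbs the defect in $\sum_{j=1}^m \frac{1}{p_j}$. Set $S_j = \sum_{i=1}^N a_{j,i}$ and let $q = 1 - \sum_{j=1}^m \frac{1}{p_j} \geq 0$. If I define an auxiliary sequence $a_{m+1,i}=1$ for all $i$ together with the exponent $\frac{1}{p_{m+1}} := q$, then the enlarged collection of exponents $\br{\frac{1}{p_j}}_{j=1}^{m+1}$ sums to $1$, which is the setting where discrete Hölder applies directly.

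The first step is to apply Hölder to the extended product, obtaining
\begin{equation}\nonumber
\sum_{i=1}^N \pa{\Pi_{j=1}^{m+1} a_{j,i}^{\frac{1}{p_j}}} \leq \Pi_{j=1}^{m+1} \pa{\sum_{i=1}^N a_{j,i}}^{\frac{1}{p_j}}.
\end{equation}
Since $a_{m+1,i}^{\frac{1}{p_{m+1}}}=1$ the left-hand side is precisely $\sum_{i=1}^N \Pi_{j=1}^m a_{j,i}^{\frac{1}{p_j}}$, and since $\sum_{i=1}^N a_{m+1,i}=N$ the right-hand side factors as $N^{q}\Pi_{j=1}^m S_j^{\frac{1}{p_j}}$. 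The final step is the elementary rewrite
\begin{equation}\nonumber
N^{q}\Pi_{j=1}^m S_j^{\frac{1}{p_j}} = N^{q+\sum_{j=1}^m \frac{1}{p_j}} \Pi_{j=1}^m \pa{\frac{S_j}{N}}^{\frac{1}{p_j}} = N \, \Pi_{j=1}^m \pa{\frac{S_j}{N}}^{\frac{1}{p_j}},
\end{equation}
which is exactly the claimed bound. The case $q=0$ collapses to the classical Hölder inequality with no modification needed, so no separate argument is required. There is no real obstacle here: the entire content of the lemma is the observation that the averaged form $S_j/N$ absorbs the slack $q>0$ through the factor $N$ on the right-hand side, and the auxiliary exponent $1/p_{m+1}=q$ is precisely what converts the inequality $\sum \frac{1}{p_j}\leq 1$ into the equality case where Hölder applies.
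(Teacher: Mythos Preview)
Your proof is correct. The approach differs slightly from the paper's: the paper proceeds by induction on $m$, peeling off one factor at a time via the two-term H\"older inequality and applying the inductive hypothesis to the remaining product, whereas you invoke the generalized $(m{+}1)$-term H\"older inequality in a single shot, absorbing the slack $q=1-\sum_{j=1}^m \frac{1}{p_j}$ through the dummy constant sequence $a_{m+1,i}\equiv 1$. The two arguments are equivalent in content---the paper's induction is effectively a proof of the generalized H\"older inequality itself---but your version is more concise provided one is happy to cite multi-term H\"older as a known result. The paper's approach has the minor advantage of being self-contained from the classical two-exponent H\"older inequality.
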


\begin{theorem}\label{thm: improved entropic on sphere general I}
Let $F_N\in P\pa{\kac}$ such that there exists $k>2$ with
\begin{equation}\nonumber
\mathcal{A}^M_{N,k}=\frac{\sum_{j=1}^N M_k \pa{\Pi_1^{\pa{j}}\pa{F_N}}}{N}<\infty.
\end{equation}
Assume in addition that
\begin{equation}\nonumber
\mathcal{A}^I_N = \frac{\sum_{i=1}^N I\pa{\Pi_1^{\pa{j}}\pa{F_N}}}{N}<\infty.
\end{equation}
Then there exists $C_1,C_2>0$ independent of $N$ and $F_N$, such that for any $\beta>0$ and $1<p<\min\pa{\frac{k+1}{3},\frac{k}{2}}$
\begin{equation}\label{eq: improved entropic on sphere general I}
\begin{gathered}
\sum_{j=1}^N \int_{\kac}F_J^{\pa{N}}\log F_j^{\pa{N}}d\sigma^N \leq H_N\pa{F_N} +C_1\\
 +\pa{\frac{4C_2\pa{1+\frac{C_k}{2}}^{\frac{1}{k}}}{\pa{2N}^{\frac{1}{4}-\frac{1}{2k}}}\pa{\mathcal{A}^{I}_{N} -1 }^{\frac{1}{2}}
\pa{1+\mathcal{A}^M_{N,k}}^{\frac{1}{k}}} N\\
 +\pa{\frac{\mathcal{A}^M_{N,k}}{2N^{\frac{k}{2}-1-\beta}} }N 
+\pa{\frac{C_p \pa{\mathcal{A}^I_N}^{\frac{p-1}{2p}} \pa{\mathcal{A}^M_{N,k}}^{\frac{1}{p}}}{2\pa{1-\frac{1}{N^{\beta}}}^{\frac{k}{2p}}N^{\frac{1}{2}\pa{\frac{k+1}{p}-3}}}}N,
\end{gathered}
\end{equation}
where $C_p=\pa{\int_{\abs{x}<1}\abs{\log\pa{1-x^2}}^{\frac{p}{p-1}}}^{\frac{p-1}{p}}$ and $C_k=\sup_{N}\pa{\frac{2}{N}}^{\frac{k}{2}}\frac{\Gamma\pa{\frac{N+k}{2}}}{\Gamma\pa{\frac{N}{2}}}$.
\end{theorem}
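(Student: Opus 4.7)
The plan is to combine, for each index $j$, the three entropy comparison results from Sections~\ref{sec: extension}, \ref{sec: entropy relation I} and \ref{sec: the entropy relation II}, and then symmetrise in $j$ via the discrete H\"older-type bound of Lemma~\ref{lem: holder type inequality}. Concretely, Theorem~\ref{thm: entropy of marginals is close}(i) bounds the spherical one-particle entropy $\int_{\kac} F_j^{(N)}\log F_j^{(N)}\,d\sigma^N$ by the flat relative entropy $H(\Pi_1^{(j)}(F_N)|\gamma)$ plus an explicit correction, Theorem~\ref{thm: connection between entropies of Pi_1(F_N) and Pi_1(tilde{F_N})}(i) upper-bounds this in turn by $H(\Pi_1^{(j)}(\widetilde{F_N})|\gamma)$ plus a further correction, and Lemma~\ref{lem: consistency} combined with~(\ref{eq: entropic inequality on R}) gives
\begin{equation*}
\sum_{j=1}^N H(\Pi_1^{(j)}(\widetilde{F_N})|\gamma) \leq H(\widetilde{F_N}|\gamma_N) = H_N(F_N).
\end{equation*}
Summing the chained inequalities over $j$ therefore reduces the problem to bounding the sums of the two correction terms in the shape $C_1 + N\cdot(\text{polynomial in the averages }\mathcal{A}^M_{N,k},\mathcal{A}^I_N)$.

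For the correction produced by Theorem~\ref{thm: entropy of marginals is close}(i), the $C_1/N$ piece sums to $C_1$, the $M_k(\Pi_1^{(j)}(F_N))/(2N^{k/2-1-\beta})$ piece sums to $N\mathcal{A}^M_{N,k}/(2N^{k/2-1-\beta})$, and the tail piece is $\sum_j I(\Pi_1^{(j)}(F_N))^{(p-1)/(2p)} M_k(\Pi_1^{(j)}(F_N))^{1/p}$, whose exponents add to $(p+1)/(2p)<1$ for $p>1$. Lemma~\ref{lem: holder type inequality} therefore produces the clean bound $N(\mathcal{A}^I_N)^{(p-1)/(2p)}(\mathcal{A}^M_{N,k})^{1/p}$, reproducing the last line of~(\ref{eq: improved entropic on sphere general I}).

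For the correction produced by Theorem~\ref{thm: connection between entropies of Pi_1(F_N) and Pi_1(tilde{F_N})}(i), each $j$ contributes a product of three factors with exponents $1/k$, $1/2$ and $1/4-1/(2k)$, whose sum $3/4+1/(2k)$ is again strictly less than $1$ when $k>2$, so Lemma~\ref{lem: holder type inequality} applies once more. The decisive geometric input is the sphere identity
\begin{equation*}
\sum_{j=1}^N M_2(\Pi_1^{(j)}(F_N)) = \int_{\kac}|v|^2 F_N\,d\sigma^N = N,
\end{equation*}
so that the average of $M_2$ is exactly $1$: this makes the $(\overline{M_2})^{1/4-1/(2k)}$ factor disappear and collapses the average of $(I + M_2 - 2)^{1/2}$ into $(\mathcal{A}^I_N-1)^{1/2}$. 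Reinserting the explicit prefactor $4C_2(1+C_k/2)^{1/k}$ and the denominator $(2N)^{1/4-1/(2k)}$, the summed correction becomes precisely the middle line of~(\ref{eq: improved entropic on sphere general I}).

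The argument is therefore largely an assembly step; the only mildly delicate bookkeeping is the verification of the two H\"older exponent conditions and the use of the sphere constraint $\sum_j M_2(\Pi_1^{(j)}(F_N))=N$ to eliminate the awkward $M_2^{1/4-1/(2k)}$ factor so that the final bound involves only $\mathcal{A}^I_N$ and $\mathcal{A}^M_{N,k}$. The parallel proof of Theorem~\ref{thm: imrpoved entropic on sphere with spherical fisher information} would proceed identically, invoking part~(ii) of the same two comparison theorems together with the additional weight $P_q^{(j)}(F_N)$.
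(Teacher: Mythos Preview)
Your proposal is correct and follows exactly the same route as the paper's own proof: chaining Theorem~\ref{thm: entropy of marginals is close}(i) and Theorem~\ref{thm: connection between entropies of Pi_1(F_N) and Pi_1(tilde{F_N})}(i) for each $j$, closing with~(\ref{eq: entropic inequality on R}) and Lemma~\ref{lem: consistency}, and then symmetrising the correction terms via Lemma~\ref{lem: holder type inequality} together with the sphere identity $\sum_{j} M_2(\Pi_1^{(j)}(F_N))=N$. In fact you supply more detail than the paper (which merely states that the result ``follows immediately'' from these ingredients), including the verification of the exponent conditions $\tfrac{p+1}{2p}<1$ and $\tfrac{3}{4}+\tfrac{1}{2k}<1$ needed to invoke Lemma~\ref{lem: holder type inequality}.
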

\begin{proof}
This follows immediately from Theorem \ref{thm: connection between entropies of Pi_1(F_N) and Pi_1(tilde{F_N})}, Theorem \ref{thm: entropy of marginals is close}, Lemma \ref{lem: holder type inequality}, the fact that for any $F_N\in P\pa{\kac}$
\begin{equation}\nonumber
\begin{gathered}
\sum_{j=1}^N M_2\pa{\Pi_1^{\pa{j}}\pa{F_N}}=\sum_{j=1}^N \int_{\kac} v_j^2 F_N d\sigma^N = N,
\end{gathered}
\end{equation}
and inequality (\ref{eq: entropic inequality on R}) applied to $\widetilde{F_N}$ together with 
\begin{equation}\nonumber
H\pa{\widetilde{F_N} | \gamma_N}=H_N\pa{F_N},
\end{equation}
proven in Lemma \ref{lem: consistency}.
\end{proof}
\begin{theorem}\label{thm: improved entropic on sphere general II}
Let $F_N\in P\pa{\kac}$ such that there exists $k>2$ with
\begin{equation}\nonumber
\mathcal{A}^M_{N,k}=\frac{\sum_{j=1}^N M_k \pa{\Pi_1^{\pa{j}}\pa{F_N}}}{N}<\infty.
\end{equation}
Assume in addition that 
\begin{equation}\nonumber
\mathcal{A}^{I_\mathbb{S}}_N = \frac{\sum_{i=1}^N I_N\pa{\Pi_1^{\pa{j}}\pa{F_N}}}{N}<\infty.
\end{equation}
and that there exists $2<q<k$ such that
\begin{equation}\nonumber
\mathcal{A}^P_{N,q}=\frac{\sum_{j=1}^N P_{q}^{\pa{j}}\pa{F_N}}{N}<\infty.
\end{equation}
where 
\begin{equation}\nonumber
P_{q}^{\pa{j}}\pa{F_N}=\int \frac{\Pi_1^{\pa{j}}\pa{F_N}(v)}{\pa{1-\frac{v^2}{N}}^{\frac{q}{q-2}}}dv.
\end{equation}
Then there exists $C_1,C_2>0$ independent of $N$ and $F_N$, such that for any $\beta>0$
\begin{equation}\label{eq: improved entropic on sphere general II}
\begin{gathered}
\sum_{j=1}^N \int_{\kac}F_J^{\pa{N}}\log F_j^{\pa{N}}d\sigma^N \leq H_N\pa{F_N} +C_1\\
+C_2 2^{\frac{3}{2}+\frac{2}{q}}\pa{1+\frac{C_k}{2}}^{\frac{1}{k}}\pa{\pa{\mathcal{A}^{I_{\mathbb{S}}}_N+2}^{\frac{q}{2(q-1)}}\pa{\mathcal{A}^P_{N,q}}^{\frac{q-2}{2(q-1)}}+2}^{\frac{q}{q-1}}
\frac{\pa{1+\mathcal{A}^M_{N,k}}^{\frac{1}{k}}}{\pa{2N}^{\frac{1}{2q}-\frac{1}{2k}}}N
\\
 +\pa{\frac{\mathcal{A}^M_{N,k}}{2N^{\frac{k}{2}-1-\beta}} }N 
+\frac{N}{2(N-3)}\frac{\eta_{N,\beta}}{N^{\frac{k}{4}-\frac{1}{2}}\pa{1-\frac{1}{N^\beta}}^{\frac{k}{4}+\frac{1}{2}}}\pa{\mathcal{A}^{I_{\mathbb{S}}}_N+2}^{\frac{1}{2}}\pa{\mathcal{A}^M_{N,k}}^{\frac{1}{2}}N,
\end{gathered}
\end{equation}
where $C_k=\sup_{N}\pa{\frac{2}{N}}^{\frac{k}{2}}\frac{\Gamma\pa{\frac{N+k}{2}}}{\Gamma\pa{\frac{N}{2}}}$ and $\eta_{N\beta}=\sup_{x\in \left[0,N^{-\beta} \right]}x\pa{\log x}^2$.
\end{theorem}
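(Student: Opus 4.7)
The plan is to combine the entropic bounds proved in the last three sections in a manner entirely parallel to the proof of Theorem \ref{thm: improved entropic on sphere general I}, replacing each use of part $(i)$ of Theorems \ref{thm: connection between entropies of Pi_1(F_N) and Pi_1(tilde{F_N})} and \ref{thm: entropy of marginals is close} with its spherical Fisher Information counterpart $(ii)$. After summing over $j=1,\dots,N$, the left hand side will be bounded by $\sum_{j} H\pa{\Pi_1^{\pa{j}}\pa{\widetilde{F_N}} | \gamma}$ plus a collection of correction terms; the former is then controlled by $H_N\pa{F_N}$ through the Euclidean entropic inequality (\ref{eq: entropic inequality on R}) applied to $\widetilde{F_N}$, together with the identity $H\pa{\widetilde{F_N}|\gamma_N}=H_N\pa{F_N}$ from Lemma \ref{lem: consistency}.

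Concretely, I would first apply part $(ii)$ of Theorem \ref{thm: entropy of marginals is close} to each term $\int_{\kac} F_j^{\pa{N}}\log F_j^{\pa{N}} d\sigma^N$, bounding it by $H\pa{\Pi_1^{\pa{j}}\pa{F_N}|\gamma}+\frac{C_1}{N}$ plus a correction that depends on $I_N\pa{F_j^{\pa{N}}}$, $M_k\pa{\Pi_1^{\pa{j}}\pa{F_N}}$, and the universal quantity $\eta_{N,\beta}$. I would then apply part $(ii)$ of Theorem \ref{thm: connection between entropies of Pi_1(F_N) and Pi_1(tilde{F_N})} to each $H\pa{\Pi_1^{\pa{j}}\pa{F_N}|\gamma}$, replacing it by $H\pa{\Pi_1^{\pa{j}}\pa{\widetilde{F_N}}|\gamma}$ plus a correction involving $I_N\pa{F_j^{\pa{N}}}$, $P_q^{\pa{j}}\pa{F_N}$, $M_2\pa{\Pi_1^{\pa{j}}\pa{F_N}}$, and $M_k\pa{\Pi_1^{\pa{j}}\pa{F_N}}$.

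Summing both sets of bounds over $j$ and invoking (\ref{eq: entropic inequality on R}) together with Lemma \ref{lem: consistency} yields $H_N\pa{F_N}+C_1$ plus a sum of correction terms. A crucial simplification is that $\sum_{j=1}^N M_2\pa{\Pi_1^{\pa{j}}\pa{F_N}}=\sum_{j=1}^N \int v_j^2 F_N d\sigma^N=N$, so the $M_2$ factors reduce to $1$ after normalisation by $N$. The remaining work is therefore to process the summed corrections, each of which is a sum over $j$ of a product of powers of $I_N\pa{F_j^{\pa{N}}}+2$, $P_q^{\pa{j}}\pa{F_N}$ and $M_k\pa{\Pi_1^{\pa{j}}\pa{F_N}}$.

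The main obstacle is the bookkeeping: bounding each such sum by the corresponding product of averages $\mathcal{A}^{I_{\mathbb{S}}}_N$, $\mathcal{A}^P_{N,q}$, $\mathcal{A}^M_{N,k}$ raised to the right exponents. This is precisely the purpose of Lemma \ref{lem: holder type inequality}: provided the exponents sum to at most one, a sum over $j$ of a product of powers is bounded by the product of the corresponding per-index averages to those powers, multiplied by $N$. I would verify in each correction term that the exponents arising from the factor $\frac{q}{2(q-1)}+\frac{q-2}{2(q-1)}=\frac{1}{1}$ split across $I_N\pa{F_j^{\pa{N}}}$ and $P_q^{\pa{j}}\pa{F_N}$ satisfy the hypothesis of Lemma \ref{lem: holder type inequality}, and use the elementary bound $\pa{a+b}^r\leq 2^{r-1}\pa{a^r+b^r}$ to separate sums appearing inside a power when the exponent exceeds one (as in the $q/(q-1)$ power in (\ref{eq: connection between entropies of Pi_1(F_N) and Pi_1(tilde{F_N}) II})). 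The resulting inequality then matches (\ref{eq: improved entropic on sphere general II}) line by line.
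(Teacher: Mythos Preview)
Your proposal is correct and matches the paper's own proof essentially step for step: the paper also proves Theorem \ref{thm: improved entropic on sphere general II} by invoking part $(ii)$ of Theorems \ref{thm: connection between entropies of Pi_1(F_N) and Pi_1(tilde{F_N})} and \ref{thm: entropy of marginals is close}, summing over $j$, using Lemma \ref{lem: holder type inequality}, the identity $\sum_j M_2\pa{\Pi_1^{\pa{j}}\pa{F_N}}=N$, and the Euclidean entropic inequality for $\widetilde{F_N}$ together with Lemma \ref{lem: consistency}. One small remark: the outer exponent appearing in part $(ii)$ of Theorem \ref{thm: connection between entropies of Pi_1(F_N) and Pi_1(tilde{F_N})} is $\frac{q-1}{q}<1$, so the averaging step via Lemma \ref{lem: holder type inequality} is applied directly to that concave power (no need for the convexity splitting $\pa{a+b}^r\leq 2^{r-1}\pa{a^r+b^r}$), and the inner product of powers of $I_N\pa{F_j^{\pa{N}}}+2$ and $P_q^{\pa{j}}$ has exponents $\frac{q}{2(q-1)}+\frac{q-2}{2(q-1)}=1$, so Lemma \ref{lem: holder type inequality} applies there as well.
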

\begin{proof}
Much like the proof of Theorem \ref{thm: improved entropic on sphere general I}, we just rely on Theorem \ref{thm: connection between entropies of Pi_1(F_N) and Pi_1(tilde{F_N})}, Theorem \ref{thm: entropy of marginals is close}, Lemma \ref{lem: holder type inequality}, the simple second moment computation and the entropic inequality for $\widetilde{F_N}$.
\end{proof}
\begin{proof}[Proof of Theorem \ref{thm: improved entropic on sphere with conditions}]
This follows immediately from Theorem \ref{thm: improved entropic on sphere general I} and the fact that
\begin{equation}\nonumber
N \leq \frac{H_N\pa{F_N}}{C_H}.
\end{equation}
\end{proof}
\begin{proof}[Proof of Theorem \ref{thm: imrpoved entropic on sphere with spherical fisher information}]
This follows immediately from Theorem \ref{thm: improved entropic on sphere general II}, the known inequality
\begin{equation}\label{eq: fisher inequality}
\sum_{j=1}^N I_N\pa{F^{\pa{N}}_j} \leq 2I_N\pa{F_N}
\end{equation}
(see \cite{BCD}) and, much like the proof of Theorem \ref{thm: improved entropic on sphere with conditions}, the fact that $N \leq \frac{H_N\pa{F_N}}{C_H}$.
\end{proof}
\section{A Non Trivial Example.}\label{sec: example}
As was mention in the introduction of this work, there is a connection between inequalities (\ref{eq: entropic inequality on the sphere}) and the subject of entropic convergence to equilibrium in Kac's model (the many body Cercignani's conjecture). It is thus not surprising that in order to find a family of density functions that will serve as an example to the validity of the conditions of our main theorems, we look for natural 'states' occurring in the setting of Kac's model. Such states, intimately connected to the concept of \emph{chaoticity} and \emph{entropic chaoticity} are described below (for more information we refer the reader to \cite{CCRLV,Carr,Einav1.5,HM,MM}).\\
Given $f\in P\pa{\R}$, with additional conditions we will mention shortly, we can define the \emph{normalisation function}, $\mathcal{Z}_N\pa{f,r}$, as
\begin{equation}\nonumber
\mathcal{Z}_N\pa{f,r}=\int_{\spa{S}{N-1}\pa{r}}f^{\otimes N}d\sigma^N_r.
\end{equation}
The \emph{conditioned tensorisation of $f$} on the sphere is the probability measure on $\kac$ with density
\begin{equation}\nonumber
F_N=\frac{f^{\otimes N}}{\mathcal{Z}_N\pa{f,\sqrt{N}}}
\end{equation}
The following theorem, proved in \cite{CCRLV}, is of great inportance in the study of conditioned tensorisations, and reinforces the intuition that when $f$ has a unit second moment the $N-$tensorisation function of $f$ is concentrated tightly about $\kac$.
\begin{theorem}\label{thm: CCRLV theorem}
Let $f\in P\pa{\R}$ such that $f\in L^p(\mathbb{R})$ for some $p>1$, $\int_\mathbb{R} v^2 f(v)dv=1$ and $\int_\mathbb{R} v^4 f(v)dv<\infty$. Then
\begin{equation}\label{eq: normalisation function fixed f approximation}
\mathcal{Z}_N(f,\sqrt{u})=\frac{2}{\sqrt{N}\Sigma \left\lvert \mathbb{S}^{N-1}\right\rvert  u^{\frac{N-2}{2}}}\left( \frac{e^{-\frac{(u-N)^2}{2N\Sigma^2}}}{\sqrt{2\pi}}+\lambda_N(u) \right),
\end{equation}
where $\Sigma^2 = \int_{\mathbb{R}}v^4f(v)dv - 1$ and $\sup_u \abs{\lambda_N(u)}\underset{N\rightarrow\infty}{\longrightarrow}0$.
\end{theorem}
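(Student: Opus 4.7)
The plan is to reduce the statement to a uniform local central limit theorem (Gnedenko type) for the density of a sum of i.i.d.\ squares. First, let $V_1,\dots,V_N$ be i.i.d.\ with density $f$, and set $S_N=\sum_{i=1}^N V_i^2$, whose density I will denote $\mu_N(u)$. Writing $dv = r^{N-1}\,dr\,d\mathcal{H}^{N-1}_{\mathbb{S}^{N-1}}$ in spherical coordinates and recalling that $d\sigma^N_r$ is the \emph{uniform probability} measure on the sphere of radius $r$, one gets
\begin{equation}\nonumber
\mathbb{P}(S_N\le u)=|\mathbb{S}^{N-1}|\int_0^{\sqrt{u}}r^{N-1}\mathcal{Z}_N(f,r)\,dr.
\end{equation}
Differentiating in $u$ (using the substitution $u=r^2$) yields the identity
\begin{equation}\nonumber
\mu_N(u)=\frac{|\mathbb{S}^{N-1}|}{2}\,u^{\frac{N-2}{2}}\,\mathcal{Z}_N(f,\sqrt{u}),
\end{equation}
so (\ref{eq: normalisation function fixed f approximation}) is equivalent to showing
\begin{equation}\nonumber
\sqrt{N}\,\Sigma\,\mu_N(u)=\frac{1}{\sqrt{2\pi}}\,e^{-(u-N)^2/(2N\Sigma^2)}+\lambda_N(u),\qquad \sup_u|\lambda_N(u)|\xrightarrow[N\to\infty]{}0.
\end{equation}

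The next step is to recognise this as a local CLT for the sums $S_N=\sum_{i=1}^N W_i$, where $W_i:=V_i^2$ are i.i.d.\ with common density $g(w)=\tfrac{f(\sqrt{w})+f(-\sqrt{w})}{2\sqrt{w}}\mathbf{1}_{w>0}$. One checks that $\mathbb{E}[W_i]=\int v^2 f(v)\,dv=1$ and $\mathrm{Var}(W_i)=\int v^4 f(v)\,dv-1=\Sigma^2$, both finite by hypothesis. To obtain uniform convergence of the rescaled density $\sqrt{N}\Sigma\,\mu_N(N+\sqrt{N}\Sigma\, y)$ to $(2\pi)^{-1/2}e^{-y^2/2}$ via Fourier inversion, the classical Gnedenko recipe requires that some convolution power of $g$ be bounded so that $|\hat g|^k\in L^1(\mathbb{R})$. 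Here the assumption $f\in L^p(\mathbb{R})$ for some $p>1$ is the decisive ingredient: the singularity of $g$ at $0$ is integrable, and iterated convolution smooths $g$, so that Young's inequality produces $g^{*k}\in L^\infty$ for some finite $k$. Standard Fourier manipulations of $\widehat{\mu_N}=\hat g^{N}$ then give the desired uniform rate.

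Substituting the CLT back into the identity for $\mu_N$ and rearranging yields the announced formula, with the same $\lambda_N$. The main obstacle is the uniform-in-$u$ convergence in Step 2: pointwise local CLT would follow from the Fourier inversion argument almost immediately, but obtaining uniformity over all $u$ (rather than just on compact sets around the mean) requires good decay of $\hat g$ at infinity, which is precisely why the $L^p$ hypothesis on $f$ (rather than mere absolute continuity) is invoked. The remaining technicalities, verifying that $|\hat g(t)|<1$ for $t\ne 0$ (non-lattice character of $W_i$) and patching together the "central" and "tail" estimates on the Fourier side, are routine once this key integrability is established.
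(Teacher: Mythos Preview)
The paper does not prove this theorem: it is quoted from \cite{CCRLV} and used as a black box in Section~\ref{sec: example}. Your sketch is nonetheless the right one, and is essentially the argument carried out in \cite{CCRLV}: one rewrites $\mathcal{Z}_N(f,\sqrt{u})$ in terms of the density $\mu_N$ of $S_N=\sum_{i=1}^N V_i^2$ via spherical coordinates, and then applies a Gnedenko-type uniform local CLT to the i.i.d.\ sum of the $W_i=V_i^2$. The $L^p$ hypothesis on $f$ is indeed what guarantees that some convolution power of the density of $W_i$ is bounded, which in turn gives the integrability of $|\hat g|^k$ needed for the Fourier-inversion argument to produce uniform (rather than merely pointwise) convergence of the rescaled density.

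One small point worth tightening in your write-up: the claim that ``Young's inequality produces $g^{*k}\in L^\infty$ for some finite $k$'' is the crux, and as stated it hides the only nontrivial step. The density $g(w)=\tfrac{f(\sqrt{w})+f(-\sqrt{w})}{2\sqrt{w}}$ has a $w^{-1/2}$ singularity at the origin, so $g$ itself need not lie in any $L^q$ with $q>1$ even when $f\in L^p$. What one actually shows (and this is where the $L^p$ assumption on $f$ enters) is that the characteristic function $\hat g$ decays at infinity, from which $|\hat g|^N\in L^1$ for $N$ large follows and the inversion argument goes through. Apart from this, your outline matches the original proof.
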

We are now ready to present our non-trivial example for a family of densities on the sphere that satisfies the conditions of our main theorems. While extensions of it can be found, we restrict ourselves to a relatively simple case to avoid some lengthy computations.
\begin{theorem}\label{thm: eample}
Let $f\in P\pa{\R}\cap C_c\pa{\R}$, $f\not=\gamma$, be such that $\int_\mathbb{R} v^2 f(v)=1$ and $I(f)<\infty$. Then, the conditioned tensorisation of $f$ satisfies the conditions of Theorem \ref{thm: improved entropic on sphere with conditions} and \ref{thm: imrpoved entropic on sphere with spherical fisher information}.
\end{theorem}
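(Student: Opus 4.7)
The plan is to verify each of the conditions of Theorems \ref{thm: improved entropic on sphere with conditions} and \ref{thm: imrpoved entropic on sphere with spherical fisher information} in turn, exploiting the product form $F_N = f^{\otimes N}/\mathcal{Z}_N(f,\sqrt{N})$ together with the asymptotic expansion of the normalisation function in Theorem \ref{thm: CCRLV theorem}. The symmetry of $F_N$ under permutations collapses every average $\frac{1}{N}\sum_{j=1}^N (\cdots)_j$ to the single quantity indexed by $j=1$, so all conditions reduce to statements about $\Pi_1^{(1)}(F_N)$ and $I_N(F_N)$. The first step would be to apply (\ref{eq: normal marginal formula}) with $k=1$ to the product density, giving the explicit expression
\begin{equation}\nonumber
\Pi_1^{(1)}(F_N)(v) = \frac{|\mathbb{S}^{N-2}|}{|\mathbb{S}^{N-1}|\sqrt{N}}\pa{1-\frac{v^2}{N}}_+^{\frac{N-3}{2}} f(v)\,\frac{\mathcal{Z}_{N-1}\pa{f,\sqrt{N-v^2}}}{\mathcal{Z}_N\pa{f,\sqrt{N}}}.
\end{equation}
Plugging the expansion (\ref{eq: normalisation function fixed f approximation}) into both numerator and denominator, the surface factors, the powers of $N^{(N-3)/2}$ and the $(1-v^2/N)^{(N-3)/2}$ factors cancel almost completely, leaving $\Pi_1^{(1)}(F_N)(v)=f(v)\pa{1+o_N(1)}$ uniformly on the support of $f$.

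Because $f\in C_c(\R)$ is compactly supported, say in $[-R,R]$, the marginal $\Pi_1^{(1)}(F_N)$ is supported in $[-R,R]$ as well. This immediately gives $M_k\pa{\Pi_1^{(1)}(F_N)}\leq R^k$ and $P_q^{(1)}(F_N)\leq (1-R^2/N)^{-q/(q-2)}$, so $\mathcal{A}_k$ and $\mathcal{A}^P_q$ are finite for every $k$ and every $q>2$ (once $N>2R^2$). For the planar Fisher information, differentiating $\log\Pi_1^{(1)}(F_N)$ explicitly produces three terms: $-\tfrac{(N-3)v}{N-v^2}$, $(\log f)'(v)$, and the chain-rule contribution from $\log\mathcal{Z}_{N-1}(f,\sqrt{N-v^2})$. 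Using once again the CCRLV expansion, the last of these is $+\tfrac{(N-3)v}{N-v^2}+O(1/N)$ on any fixed compact set, producing a cancellation that yields $(\log\Pi_1^{(1)}(F_N))'(v)=(\log f)'(v)+O(1/N)$ uniformly on $[-R,R]$, hence $I\pa{\Pi_1^{(1)}(F_N)}\to I(f)<\infty$ and $\mathcal{A}_I<\infty$. For $I_N(F_N)$, write $\log F_N=\sum_j\log f(v_j)-\log\mathcal{Z}_N$ and apply the operators $L_{i,j}$ from Lemma \ref{lem: connection between fisher information on the sphere}; a direct computation gives
\begin{equation}\nonumber
I_N(F_N)\leq \frac{2}{N}\sum_{i\neq j}\int_{\kac}v_i^2\,\pa{(\log f)'(v_j)}^2 F_N\,d\sigma^N \leq 2N\int_{\R}\pa{(\log f)'(v)}^2 \Pi_1^{(1)}(F_N)(v)\,dv,
\end{equation}
which is bounded by a constant multiple of $NI(f)$, so $\sup_N I_N(F_N)/N<\infty$.

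The only remaining condition is the entropic lower bound $\inf_N H_N(F_N)/N \geq C_H>0$. The plan here is to appeal to entropic chaoticity: the same CCRLV framework that underlies Theorem \ref{thm: CCRLV theorem} (see \cite{CCRLV,HM}) yields the convergence $H_N(F_N)/N\to H(f|\gamma)$ as $N\to\infty$, and the hypothesis $f\neq\gamma$ (together with $\int v^2f=1$) forces $H(f|\gamma)>0$. Since $H_N(F_N)/N>0$ for each fixed $N$ and the tail of the sequence stays above $H(f|\gamma)/2$, the infimum is strictly positive, and one can take $C_H = \tfrac{1}{2}\min\br{H(f|\gamma),\inf_{N\leq N_0}H_N(F_N)/N}$ for a suitable threshold $N_0$.

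The main obstacle in this plan is the Fisher information step: Theorem \ref{thm: CCRLV theorem} as stated gives only $\sup_u|\lambda_N(u)|\to 0$, whereas the cancellation argument above requires uniform control on $(\log\mathcal{Z}_{N-1}(f,\sqrt{u}))'$ on a compact range of $u$. One would expect this to be recoverable by examining the local central limit theorem argument behind Theorem \ref{thm: CCRLV theorem} (which naturally produces differentiable Edgeworth-type expansions) and using the hypothesis $f\in L^p$ with $I(f)<\infty$ to obtain regularity of $\mathcal{Z}_N$ in its radial variable. The entropy step is the other delicate point, since the convergence $H_N(F_N)/N\to H(f|\gamma)$ is known in the chaos literature but requires a quantitative form here (any rate suffices, because only the limit's positivity matters).
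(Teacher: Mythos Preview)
Your overall plan mirrors the paper's: exploit symmetry, write the first marginal via the CCRLV asymptotics, use compact support for the moment and pole-control conditions, bound $I_N(F_N)/N$ by the product structure, and obtain the entropy lower bound from $H_N(F_N)/N\to H(f|\gamma)>0$. On those points your proposal is essentially correct and matches the paper.

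The genuine gap is exactly where you flag it, in the control of $\mathcal{A}_I$. You try to differentiate $\log\Pi_1^{(1)}(F_N)$ directly and argue that the $\tfrac{(N-3)v}{N-v^2}$ terms cancel, leaving $(\log f)'+O(1/N)$. But this requires a uniform bound on $\partial_u \log \mathcal{Z}_{N-1}(f,\sqrt{u})$, which is not delivered by Theorem \ref{thm: CCRLV theorem}; that theorem only gives $\sup_u|\lambda_N(u)|\to 0$, with no control on $\lambda_N'$. Upgrading the local CLT to a $C^1$ statement is a nontrivial separate argument (and would need extra hypotheses on $f$), so as written the step does not go through.

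The paper sidesteps this obstacle entirely, and it is worth noting the trick because it is the one new idea your sketch is missing. Instead of differentiating the normalisation function, the paper bounds the \emph{planar} Fisher information of the marginal by the \emph{spherical} one. From Lemma \ref{lem: connection between fisher information on the sphere} one has, on the support $[-R,R]$,
\[
I\pa{\Pi_1(F_N)} \;\leq\; \frac{1}{1-\frac{R^2}{N}}\int_{\R}\pa{1-\tfrac{v^2}{N}}\abs{\tfrac{d}{dv}\log \Pi_1(F_N)}^2 \Pi_1(F_N)\,dv \;\leq\; \frac{I_N\pa{F_1^{(N)}}+2}{1-\frac{R^2}{N}},
\]
and then the subadditivity $\sum_j I_N\pa{F_j^{(N)}}\leq 2I_N(F_N)$ plus symmetry give $I_N\pa{F_1^{(N)}}\leq \tfrac{2}{N}I_N(F_N)$. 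Since you already know how to bound $I_N(F_N)/N$ by (a constant times) $I(f)$, this closes the loop without ever touching $\mathcal{Z}_{N-1}'$. For the entropy step, the paper also avoids citing abstract entropic chaoticity: it computes $H_N(F_N)/N$ directly as $\int \log f\cdot \Pi_1(F_N)\,dv + N^{-1}\log(|\mathbb{S}^{N-1}|N^{(N-1)/2}) + o(1)$ and reads off the limit $H(f|\gamma)$ from the uniform convergence $\Pi_1(F_N)\to f$ on $[-R,R]$ and Stirling.
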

\begin{proof}
The first thing we note is that since $F_N$ is symmetric with respect to its variables all the marginals are identical. As such, for any $j\geq 2$
\begin{equation}\nonumber
H_N\pa{\Pi_1^{\pa{j}}\pa{F_N}}=H_N\pa{\Pi_1^{\pa{1}}\pa{F_N}}=H_N\pa{\Pi_1\pa{F_N}}
\end{equation}
and the same holds for $I,I_N$ and $M_k$. In that case the appropriate averaged quantities, $\mathcal{A}$, are
\begin{equation}\nonumber
\mathcal{A}_k=\sup_N M_k\pa{\Pi_1\pa{F_N}},\; \mathcal{A}_I=\sup_N I\pa{\Pi_1\pa{F_N}},\; \mathcal{A}_q^P=\sup_N P_q^{\pa{1}}\pa{\Pi_1\pa{F_N}}.
\end{equation}
Using formula (\ref{eq: normal marginal formula}) and the definition of the normalisation function we have that
\begin{equation}\label{eq: conditioned tensorisation marginal}
\begin{gathered}
\Pi_1\pa{F_N}(v)=\frac{\abs{\mathbb{S}^{N-2}}\pa{1-\frac{v^2}{N}}^{\frac{N-3}{2}}\mathcal{Z}_{N-1}\pa{f,\sqrt{N-v^2}}}{\abs{\mathbb{S}^{N-1}}\sqrt{N}\mathcal{Z}_N\pa{f,\sqrt{N}}}f(v)\\
=\sqrt{\frac{N}{N-1}}\frac{e^{-\frac{\pa{1-v^2}^2}{2(N-1)\Sigma^2}}+\sqrt{2\pi}\lambda_{N-1}\pa{N-v^2}}{1+\sqrt{2\pi}\lambda_N(N)}f(v),
\end{gathered}
\end{equation}
due to (\ref{eq: normalisation function fixed f approximation}). As such
\begin{equation}\nonumber
\mathcal{A}_k=\sup_N M_k\pa{\Pi_1\pa{F_N}} \leq \sup_N \frac{1+\sqrt{2\pi}\sup \abs{\lambda_{N-1}}}{1+\sqrt{2\pi}\lambda_N(N)}\sqrt{\frac{N}{N-1}}\int_{\R}\abs{v}^k f(v)dv < \infty,
\end{equation}
for any $k>0$ as $f\in C_c\pa{\R}$. \\
Let $R>0$ be such that $f$ is supported in $[-R,R]$. We find that for $N>R$
\begin{equation}\nonumber
\begin{gathered}
\mathcal{A}_q^P=\sup_N P_q^{\pa{1}}\pa{\Pi_1\pa{F_N}} =\sup_N \int_{-R}^{R} \frac{\Pi_1\pa{F_N}(v)}{\pa{1-\frac{v^2}{N}}^{\frac{q}{q-2}}}dv \\
\leq \sup_N \frac{1+\sqrt{2\pi}\sup \abs{\lambda_{N-1}}}{1+\sqrt{2\pi}\lambda_N(N)} \sqrt{\frac{N}{N-1}}\frac{1}{\pa{1-\frac{R^2}{N}}^{\frac{q}{q-2}}} <\infty,
\end{gathered}
\end{equation}
for any $q>2$.\\
Using (\ref{eq: connection between fisher information on the sphere}) and the fact that $f$ is compactly supported, we see that for $N>R$ 
\begin{equation}\nonumber
\begin{gathered}
I\pa{\Pi_1\pa{F_N}} =\int_{-R}^R \abs{\frac{d}{dv}\log \Pi_1\pa{F_N}(v)}\Pi_1\pa{F_N}(v)dv  \\
\leq \frac{I_N\pa{F_1^{\pa{N}}}+2}{\pa{1-\frac{R^2}{N}}} \leq \frac{2\pa{\frac{I_N\pa{F_N}}{N}+1}}{\pa{1-\frac{R^2}{N}}},
\end{gathered}
\end{equation}
where we have used (\ref{eq: fisher inequality}) and the symmetry of $F_N$. This implies that 
\begin{equation}\nonumber
\mathcal{A}_I = \sup_N I\pa{\Pi_1\pa{F_N}} \leq \sup_N \frac{2\pa{\frac{I_N\pa{F_N}}{N}+1}}{\pa{1-\frac{R^2}{N}}},
\end{equation}
showing that if
\begin{equation}\nonumber
\sup_N \frac{I_N\pa{F_N}}{N}<\infty
\end{equation}
we obtain the required Fisher Information condition for Theorem \ref{thm: improved entropic on sphere with conditions}, as well as Theorem \ref{thm: imrpoved entropic on sphere with spherical fisher information}. We find that
\begin{equation}\nonumber
\begin{gathered}
\frac{I_N\pa{F_N}}{N}=\frac{1}{N}\int_{\kac} \frac{\abs{\nabla_{\mathbb{S}}F_N}^2}{F_N}d\sigma^N \leq \frac{1}{N}\int_{\kac} \frac{\abs{\nabla F_N}^2}{F_N}d\sigma^N\\
=\int_{\kac} \pa{\frac{f^\prime (v_1)}{f(v_1)}}^2 F_N d\sigma^N=\int_{\R} \pa{\frac{f^\prime (v)}{f(v)}}^2\Pi_1\pa{F_N}(v)dv
\end{gathered}
\end{equation}
\begin{equation}\nonumber
 \leq \sup_N \frac{1+\sqrt{2\pi}\sup \abs{\lambda_{N-1}}}{1+\sqrt{2\pi}\lambda_N(N)}\sqrt{\frac{N}{N-1}} I(f)=C_I<\infty,
\end{equation}
where we have used the special structure of $F_N$ and symmetry.\\
Last, but not least, we will deal with the rescaled entropy term.
\begin{equation}\nonumber
\begin{gathered}
\frac{H_N\pa{F_N}}{N}=\frac{1}{N}\int_{\kac} F_N \log f^{\otimes N} d\sigma^N - \frac{\log \mathcal{Z}_N\pa{f,\sqrt{N}}}{N}\\
=\int_{\R} \log f(v)\Pi_1\pa{F_N}(v)dv +\frac{\log\pa{\abs{\spa{S}{N-1}}N^{\frac{N-1}{2}}}}{N}-\frac{\log\pa{\frac{2}{\sqrt{2\pi}\Sigma}\pa{1+\sqrt{2\pi}\lambda_N(N)}}}{N}.
\end{gathered}
\end{equation}
As $f$ is supported on $[-R,R]$ we find that $\Pi_1\pa{F_N}$ converges to $f$ uniformly on $\R$. Also, using the asymptotic approximation of $\abs{\spa{S}{N-1}}$ one can show that
\begin{equation}\nonumber
\frac{\log\pa{\abs{\spa{S}{N-1}}N^{\frac{N-1}{2}}}}{N} \underset{N\rightarrow\infty}{\longrightarrow} \frac{1+\log\pa{2\pi}}{2}=-\int_{\R}f(v)\log \gamma(v) dv.
\end{equation}
Thus, 
\begin{equation}\nonumber
\lim_{N\rightarrow\infty}\frac{H_N\pa{F_N}}{N}=H\pa{f|\gamma}>0,
\end{equation}
and since $F_N\not\equiv 1$ we know that $H_N\pa{F_N}\not=0$ for all $N$, implying that there exists $C_H>0$ with 
\begin{equation}\nonumber
\frac{H_N\pa{F_N}}{N} \geq C_H,
\end{equation}
completing our theorem.
\end{proof}
\begin{remark}
Note that in the proof of the above theorem the only quantity that wasn't bounded by an 'explicit' constant is the rescaled entropy. However, such a constant can be found by a more detailed computation.
\end{remark}
\section{Final Remarks}\label{sec: final}
While the main result proven in this paper gives a glimpse of tools and quantities that are of import both to the equivalence of ensembles and many body Cercignani's conjecture, there are still many items of interest that can be explored in future research. We present a few remarks and observations related to that:
\begin{itemize}
\item The condition on the pole control, $P_q^{\pa{j}}$, seems to fit the problematic behaviour near the poles that was used to show that the constant in (\ref{eq: entropic inequality on the sphere}) is sharp. However, in relation to Kac's model, it seems hard to show the propagation of such property under Kac's flow. If one is allowed to use the exponent $q=\infty$, it is easy to see that the expression given for $P_\infty^{\pa{j}}$ is controlled by $I_N\pa{F_j^{\pa{N}}}$ - a more natural quantity in the kinetic setting. It would be interesting to see what will need to replace, if possible, the condition about infinite moment control (i.e. $k=\infty$) in order to be able to use this.
\item The moment control condition appears to be natural in Kac's setting. Indeed, following \cite{Einav1} one sees that the family of functions that was constructed to show the validity of Villani's conjecture satisfies
\begin{equation}\nonumber
M_k\pa{\Pi_1\pa{F_N}}\underset{N\rightarrow\infty}{\longrightarrow}\infty,
\end{equation}
for any $k>2$.
\item A very important observation, that can be made following Theorems \ref{thm: improved entropic on sphere general I} and \ref{thm: improved entropic on sphere general II}, is that the requirement on $\frac{H_N\pa{F_N}}{N}$ can be removed and one can gain a quantitative version of the deviation of the sum of the partial entropies with respect to the total entropy. In other words, we can find an explicit $\kappa_N$ such that
\begin{equation}\nonumber
\sum_{j=1}^N \int_{\kac}F_j^{\pa{N}}\log F_j^{\pa{N}}d\sigma^N \leq H_N\pa{F_N}+\kappa_N.
\end{equation}
Under our setting $\kappa_N$ may blow up but perhaps a more delicate estimation can be done in the future to evaluate it, or some regimes on the behaviour of $H_N\pa{F_N}$ may be explored and will allow us to improve our main inequality.
\item The rescaled entropy, $\frac{H_N\pa{F_N}}{N}$ is very important in the study of Kac's model and is connected to the concept of entropic chaoticity (see more in \cite{CCRLV, Einav2,HM,MM}). One knows that under Kac's flow the entropy will decrease, so a lower bound on the rescaled entropy can't propagate with time. However, it may give rise to a two time scale approach where we find a fast convergence to a state near equilibrium if we start far from equilibrium using the ideas in our work, followed by a fast convergence to equilibrium using different techniques.
\end{itemize}
\appendix
\section{Additional Proofs. }
In this Appendix we will provide additional proofs that we felt would hinder the flow of the paper.
\begin{lemma}\label{lem: entropic inequality on R}
Let $F_N\in P\pa{\R^N}$ a probability density with finite second moment. Then 
\begin{equation}\label{eq-app: entropi inequality on R}
\sum_{j=1}^N H\pa{\Pi_1^{\pa{j}}\pa{F_N}|\gamma} \leq H\pa{F_N | \gamma_N}.
\end{equation}
\end{lemma}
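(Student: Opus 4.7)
The plan is the standard chain-rule decomposition of the relative entropy against a product reference measure. We may assume $H(F_N|\gamma_N)<\infty$, as otherwise the inequality is trivial. First, introduce the tensor product of the one-dimensional marginals,
\begin{equation}\nonumber
G_N(v_1,\dots,v_N)=\prod_{j=1}^N \Pi_1^{\pa{j}}\pa{F_N}(v_j),
\end{equation}
which is a probability density on $\R^N$ (product of probability densities on $\R$). The second key observation is that the Gaussian reference measure tensorises, $\gamma_N(v)=\prod_{j=1}^N\gamma(v_j)$.

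Next, I would split the logarithm additively:
\begin{equation}\nonumber
\log\frac{F_N}{\gamma_N}=\log\frac{F_N}{G_N}+\log\frac{G_N}{\gamma_N}=\log\frac{F_N}{G_N}+\sum_{j=1}^N \log\frac{\Pi_1^{\pa{j}}\pa{F_N}(v_j)}{\gamma(v_j)},
\end{equation}
and integrate against $F_N$. Since each summand in the right-hand sum depends on a single coordinate $v_j$, integration against $F_N$ picks out the corresponding marginal, giving
\begin{equation}\nonumber
\int_{\R^N} F_N\log\frac{G_N}{\gamma_N}\,dv=\sum_{j=1}^N H\pa{\Pi_1^{\pa{j}}\pa{F_N}\mid\gamma}.
\end{equation}
Combining, we obtain the chain-rule identity
\begin{equation}\nonumber
H\pa{F_N\mid\gamma_N}=H\pa{F_N\mid G_N}+\sum_{j=1}^N H\pa{\Pi_1^{\pa{j}}\pa{F_N}\mid\gamma}.
\end{equation}
The conclusion then follows from the non-negativity of the relative entropy, $H(F_N\mid G_N)\geq 0$, which is an immediate consequence of Jensen's inequality applied to $t\mapsto t\log t$, together with the fact that both $F_N$ and $G_N$ are probability densities.

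The only subtle point — and what I would regard as the main obstacle — is making sure that the integral $\int F_N\log(G_N/\gamma_N)$ is unambiguously defined and that the sum of marginal relative entropies is not of the form $\infty-\infty$. The finite second moment assumption ensures that $\int F_N \cdot \tfrac{|v|^2}{2}\,dv<\infty$, so the ``Gaussian part'' of each relative entropy is controlled, and splitting each $H(\Pi_1^{(j)}(F_N)\mid\gamma)$ into its $\int\Pi_1^{(j)}(F_N)\log\Pi_1^{(j)}(F_N)$ part (possibly $-\infty$ but bounded below by $-\int \Pi_1^{(j)}(F_N)\cdot \tfrac{v^2}{2}\,dv$ minus a constant, via $t\log t\geq -e^{-1}$ and the Gaussian tail) and the quadratic moment part allows one to verify that the decomposition is legitimate. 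Under the reduction $H(F_N|\gamma_N)<\infty$ all terms fit together with absolutely convergent integrals, and the inequality follows.
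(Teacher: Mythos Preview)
Your proof is correct and is essentially the same as the paper's: both introduce the product of one-dimensional marginals $G_N=\prod_j \Pi_1^{(j)}(F_N)$ and exploit the non-negativity of $H(F_N\mid G_N)$, the only cosmetic difference being that the paper first strips off the Gaussian contribution (using $\gamma_N=\prod_j\gamma$) to reduce to $\sum_j H(\Pi_1^{(j)}(F_N))\leq H(F_N)$, whereas you combine both steps into the single chain-rule identity $H(F_N\mid\gamma_N)=H(F_N\mid G_N)+\sum_j H(\Pi_1^{(j)}(F_N)\mid\gamma)$. Your additional attention to integrability is a nice touch that the paper does not address.
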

\begin{proof}
Note that 
\begin{equation}\nonumber
\begin{gathered}
\sum_{j=1}^N \int_{\R}\Pi_1^{\pa{j}}\pa{F_N}(v_j)\log \gamma(v_j) dv_j = -\frac{N\log 2\pi}{2}-\frac{1}{2}\sum_{j=1}^N \int_{\R^N}v_j^2 F_N(v)dv \\
=-\frac{N\log 2\pi}{2}-\frac{1}{2} \int_{\R^N}|v|^2 F_N(v)dv = \int_{\R^N} F_N(v)\log \gamma_N(v)dv.
\end{gathered}
\end{equation}
Thus, we only need to prove that
\begin{equation}\nonumber
\sum_{j=1}^N H\pa{\Pi_1^{\pa{j}}\pa{F_N}} \leq H\pa{F_N }.
\end{equation}
Define $G_N(v)=\Pi_{j=1}^N \Pi_1^{\pa{j}}\pa{F_N}(v_j)$. $G_N\in P\pa{\R^N}$ and 
\begin{equation}\nonumber
\begin{gathered}
0 \leq H\pa{F_N | G_N} = H\pa{F_N}-\int_{\R^N} F_N(v)\log G_N(v)dv \\
=H\pa{F_N}-\sum_{j=1}^N \int_{\R^N}F_N(v)\log\pa{\Pi_1^{\pa{j}}\pa{F_N}(v_j)}dv=H\pa{F_N}-\sum_{j=1}^N H\pa{\Pi_1^{\pa{j}}\pa{F_N}},
\end{gathered}
\end{equation}
completing the proof.
\end{proof}

\begin{lemma}\label{app: inequality for W_q connection to W_1}
Denote by $d(x,y)=\min\pa{\abs{x-y},1}$ for any $x,y\in \R$. Then for any $q\geq 1$ and $R\geq 1$
\begin{equation}\label{eq: inequality for W_q connection to W_1}
\abs{x-y}^q \leq R^q d(x,y) + \frac{2^k}{R^{k-q}}\pa{\abs{x}^k+\abs{y}^k}.
\end{equation}
\end{lemma}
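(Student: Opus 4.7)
The plan is to prove \eqref{eq: inequality for W_q connection to W_1} by a straightforward case analysis based on the size of $\abs{x-y}$ relative to $R$, exploiting the fact that the right-hand side consists of two non-negative terms, so it suffices in each case to bound $\abs{x-y}^q$ by one of them.

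First I would treat the case $\abs{x-y} \leq R$, aiming to show that the first term $R^q d(x,y)$ alone already dominates. This naturally splits into two sub-cases according to whether $\abs{x-y} \leq 1$ or $1 < \abs{x-y} \leq R$. In the first sub-case $d(x,y) = \abs{x-y}$, and since $R \geq 1$ and $q\geq 1$ we have $\abs{x-y}^{q-1} \leq 1 \leq R^{q-1}$, giving $\abs{x-y}^q \leq R^{q-1}\abs{x-y} \cdot R = R^q d(x,y)$ (after adjusting the factor of $R$ appropriately; in fact $\abs{x-y}^q \leq \abs{x-y} \leq R^q d(x,y)$ suffices). In the second sub-case $d(x,y) = 1$ and $\abs{x-y}^q \leq R^q = R^q d(x,y)$ follows directly from monotonicity.

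Next, in the case $\abs{x-y} > R$, I would bound $\abs{x-y}^q$ by the second term. The idea is to write
\begin{equation}\nonumber
\abs{x-y}^q = \abs{x-y}^{q-k} \cdot \abs{x-y}^k \leq R^{q-k}\abs{x-y}^k,
\end{equation}
using that $q-k < 0$ together with $\abs{x-y} > R$; then convexity of $t\mapsto t^k$ (for $k\geq 1$; if $k<1$, use the slightly cruder $(a+b)^k \leq a^k+b^k$) yields $\abs{x-y}^k \leq (\abs{x}+\abs{y})^k \leq 2^{k-1}(\abs{x}^k+\abs{y}^k) \leq 2^k(\abs{x}^k+\abs{y}^k)$. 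Combining gives $\abs{x-y}^q \leq \tfrac{2^k}{R^{k-q}}(\abs{x}^k+\abs{y}^k)$, which is exactly the second term on the right of \eqref{eq: inequality for W_q connection to W_1}.

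There is no real obstacle here; the only mild subtlety is keeping track of the two sub-cases in the regime $\abs{x-y}\leq R$ so that the factor $d(x,y) = \min(\abs{x-y},1)$ is handled correctly in both. Since in each of the two main cases a single term of the right-hand side already dominates $\abs{x-y}^q$, and both terms are non-negative, the sum certainly does, completing the proof.
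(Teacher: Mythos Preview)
Your proof is correct and in fact cleaner than the paper's. Both arguments are elementary case analyses, but you and the paper organize the cases differently.

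The paper splits first on whether $\abs{x-y}\leq 1$ or $\abs{x-y}>1$; in the latter regime it then distinguishes three sub-cases according to whether $\abs{x}$ and $\abs{y}$ lie below or above $R/2$, and in each sub-case bounds $\abs{x-y}^q\leq 2^{q-1}(\abs{x}^q+\abs{y}^q)$ before comparing $\abs{x}^q$ and $\abs{y}^q$ individually either to $(R/2)^q$ or to $(2/R)^{k-q}\abs{\cdot}^k$. Your decomposition is instead based directly on the threshold $R$ for $\abs{x-y}$: when $\abs{x-y}\leq R$ the term $R^q d(x,y)$ already suffices (with the two sub-cases $\abs{x-y}\leq 1$ and $1<\abs{x-y}\leq R$ handled in one line each), and when $\abs{x-y}>R$ you use $\abs{x-y}^{q-k}\leq R^{q-k}$ together with $\abs{x-y}^k\leq 2^{k-1}(\abs{x}^k+\abs{y}^k)$ to land on the second term. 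This is more transparent because the case split matches exactly the dichotomy ``which term of the right-hand side dominates''. The paper's route recovers a marginally sharper constant ($2^{k-1}$ in place of $2^k$) in some sub-cases, but this plays no role in the application. Both arguments rely implicitly on $k>q$, which is the standing hypothesis in Lemma~\ref{lem: hauray and mischeler wasserstein connection}.
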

\begin{proof}
If $\abs{x-y}\leq 1$ we have that
\begin{equation}\nonumber
\abs{x-y}^q \leq \abs{x-y}=d(x,y) \leq  R^q d(x,y) + \frac{2^k}{R^{k-q}}\pa{\abs{x}^k+\abs{y}^k}.
\end{equation}
When $\abs{x-y}>1$ we have that if $\abs{x},\abs{y}<\frac{R}{2}$ 
\begin{equation}\nonumber
\abs{x-y}^q \leq 2^{q-1} \pa{\abs{x}^q+\abs{y}^q} \leq R^q =R^q d(x,y) \leq R^q d(x,y) + \frac{2^k}{R^{k-q}}\pa{\abs{x}^k+\abs{y}^k},
\end{equation}
due to the convexity of the map $f(t)=t^q$. If $\abs{x}<\frac{R}{2}$ and $\abs{y}>\frac{R}{2}$ (or vice versa)
\begin{equation}\nonumber
\begin{gathered}
\abs{x-y}^q \leq 2^{q-1} \pa{\abs{x}^q+\abs{y}^q} \leq \frac{R^q}{2}+2^{q-1}\pa{\frac{2}{R}}^{k-q}\abs{y}^k  \\
=\frac{R^q}{2}d(x,y)+\frac{2^{k-1}}{R^{k-q}}\abs{y}^k \leq R^q d(x,y) + \frac{2^k}{R^{k-q}}\pa{\abs{x}^k+\abs{y}^k}.
\end{gathered}
\end{equation}
Lastly, if $\abs{x},\abs{y}\geq \frac{R}{2}$ then
\begin{equation}\nonumber
\begin{gathered}
\abs{x-y}^q \leq 2^{q-1} \pa{\abs{x}^q+\abs{y}^q} \leq 2^{q-1}\pa{\frac{2}{R}}^{k-q}\pa{\abs{y}^k+\abs{x}^k}  \leq R^q d(x,y) + \frac{2^k}{R^{k-q}}\pa{\abs{x}^k+\abs{y}^k},
\end{gathered}
\end{equation}
completing the proof.
\end{proof}
\begin{lemma}\label{app: holder type inequality}
Let $\br{a_{j,i}}_{i=1,\dots,N,\;j=1,\dots,m}$ be non-negative numbers. Let $p_1,\dots,p_m$ be positive numbers such that $\sum_{j=1}^m \frac{1}{p_j}\leq 1$. Then
\begin{equation}\label{eq-app: holder type inequality}
\sum_{i=1}^N \pa{\Pi_{j=1}^m a_{j,i}^{\frac{1}{p_j}}} \leq \Pi_{j=1}^m \pa{\frac{\sum_{i=1}^N a_{j,i}}{N}}^{\frac{1}{p_j}}N.
\end{equation}
\end{lemma}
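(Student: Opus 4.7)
The plan is to recognise Lemma \ref{app: holder type inequality} as a direct consequence of the generalised Hölder inequality once the weights $1/p_j$ are normalised to sum to $1$. Recall that for non-negative numbers $\{b_{j,i}\}$ and positive exponents $q_1,\dots,q_m$ with $\sum_{j=1}^m 1/q_j = 1$, the generalised Hölder inequality states
\begin{equation}\nonumber
\sum_{i=1}^N \prod_{j=1}^m b_{j,i} \leq \prod_{j=1}^m \left(\sum_{i=1}^N b_{j,i}^{q_j}\right)^{1/q_j}.
\end{equation}
This will be the only non-trivial input; everything else is bookkeeping with the normalising factors of $N$.

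First I would treat the equality case $\sum_{j=1}^m 1/p_j = 1$. Substituting $b_{j,i} = a_{j,i}^{1/p_j}$ and $q_j = p_j$ in the generalised Hölder inequality yields
\begin{equation}\nonumber
\sum_{i=1}^N \prod_{j=1}^m a_{j,i}^{1/p_j} \leq \prod_{j=1}^m \left(\sum_{i=1}^N a_{j,i}\right)^{1/p_j} = \prod_{j=1}^m \left(\frac{\sum_{i=1}^N a_{j,i}}{N}\right)^{1/p_j} \cdot N^{\sum_{j=1}^m 1/p_j},
\end{equation}
and since the exponent of $N$ is exactly $1$, this gives (\ref{eq-app: holder type inequality}).

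For the general case $\sum_{j=1}^m 1/p_j \leq 1$, the plan is to reduce to the equality case by introducing a dummy sequence. Specifically, set $\alpha = 1 - \sum_{j=1}^m 1/p_j \geq 0$ and define $a_{m+1,i} := 1$ for all $i=1,\dots,N$, together with the exponent $1/p_{m+1} := \alpha$ (if $\alpha=0$ the dummy sequence is trivial and may be omitted). Then $\sum_{j=1}^{m+1} 1/p_j = 1$, the product $\prod_{j=1}^m a_{j,i}^{1/p_j}$ is unchanged, and the extra factor on the right becomes $(\sum_i 1/N)^{\alpha} = 1$. Applying the equality case just established to this augmented family yields exactly the desired inequality.

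I do not anticipate a genuine obstacle: the only slight subtlety is recognising that the factor of $N$ on the right of (\ref{eq-app: holder type inequality}) is precisely the bookkeeping term $N^{\sum 1/p_j}$ produced when one rewrites $\sum_i a_{j,i}$ as $N \cdot (\sum_i a_{j,i}/N)$, and that the reduction from $\sum 1/p_j < 1$ to $\sum 1/p_j = 1$ through a constant dummy sequence leaves both sides of the inequality unaltered.
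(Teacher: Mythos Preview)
Your argument is correct. Both the equality case via generalised H\"older and the reduction to it through a constant dummy sequence with exponent $\alpha=1-\sum_{j}1/p_j$ go through without issue; in particular, since $0<\alpha<1$ in the strict case, the auxiliary exponent $p_{m+1}=1/\alpha>1$ is admissible for generalised H\"older.

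The paper takes a slightly different route: it proves the equivalent form
\[
\sum_{i=1}^N \prod_{j=1}^m a_{j,i}^{1/p_j} \leq N^{\,1-\sum_j 1/p_j}\prod_{j=1}^m\Bigl(\sum_{i=1}^N a_{j,i}\Bigr)^{1/p_j}
\]
by induction on $m$. The base case $m=1$ is the two-exponent H\"older inequality applied with the constant sequence $1$, and the inductive step peels off the $j=1$ factor with H\"older (exponents $p_1$ and $q_1=p_1/(p_1-1)$) and applies the inductive hypothesis to the remaining product with exponents $p_j/q_1$. In effect the paper is re-deriving the generalised H\"older inequality while simultaneously absorbing the defect $1-\sum_j 1/p_j$ into the induction. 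Your approach is a cleaner packaging: you invoke generalised H\"older once and handle the defect in a single explicit step with the dummy sequence, making the role of the extra factor of $N$ more transparent.
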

\begin{proof}
It is sufficient to prove that
\begin{equation}\nonumber
\sum_{i=1}^N \pa{\Pi_{j=1}^m a_{j,i}^{\frac{1}{p_j}}} \leq N^{1-\sum_{j=1}^m \frac{1}{p_j}} \Pi_{j=1}^m \pa{\sum_{i=1}^N a_{j,i}}^{\frac{1}{p_j}},
\end{equation}
which we will do by induction on $m$. We note that since $\sum_{j=1}^m \frac{1}{p_j}\leq 1$ we have that $p_j\geq 1 $ for all $j$. For $m=1$ we have, by H\"older inequality
\begin{equation}\nonumber
\sum_{i=1}^N a_{i}^{\frac{1}{p}} \leq \pa{\sum_{i=1}^N a_i }^{\frac{1}{p}}\pa{\sum_{i=1}^N 1}^{1-\frac{1}{p}}=N^{1-\frac{1}{p}}\pa{\sum_{i=1}^N a_i }^{\frac{1}{p}}.
\end{equation}
Assume (\ref{eq-app: holder type inequality}) is true for $m$. Then, denoting by $q_1=\frac{p_1}{p_1-1}$ we have that
\begin{equation}\nonumber
\begin{gathered}
\sum_{i=1}^N \pa{\Pi_{j=1}^{m+1} a_{j,i}^{\frac{1}{p_j}}} \leq \pa{\sum_{i=1}^N a_{1,i}}^{\frac{1}{p_1}}\pa{\sum_{i=1}^N \pa{\Pi_{j=2}^{m+1} a_{j,i}^{\frac{q_1}{p_j}}}}^{\frac{1}{q_1}}  \\
\leq \pa{\sum_{i=1}^N a_{1,i}}^{\frac{1}{p_1}}\pa{N^{1-\sum_{j=2}^{m+1} \frac{q_1}{p_j}} \Pi_{j=2}^{m+1} \pa{\sum_{i=1}^N a_{j,i}}^{\frac{q_1}{p_j}}}^{\frac{1}{q_1}} 
=N^{1-\sum_{j=1}^{m+1} \frac{1}{p_j}} \Pi_{j=1}^{m+1} \pa{\sum_{i=1}^N a_{j,i}}^{\frac{1}{p_j}},
\end{gathered}
\end{equation}
as 
\begin{equation}\nonumber
\sum_{j=2}^{m+1}\frac{q_1}{p_j}=q_1\pa{\sum_{j=1}^{m+1}\frac{1}{p_j}-\frac{1}{p_1}}\leq q_1\pa{1-\frac{1}{q_1}}=1.
\end{equation}
completing the proof.
\end{proof}


\begin{thebibliography}{99}
\bibitem{BCD}
Barthe. F, Cordero-Erausquin. D, and Maurey, B:
 \emph{Entropy of spherical marginals and related inequalities.} J. Math. Pures Appl. (9) 86 (2006), no. 2, 89--99.

\bibitem{Eric}
Carlen E. A.:
\emph{The Rate of Local Equilibration in Kinetic Theory},
Prospects in Mathematical Physics, pp 71-88, Contemp. Math., 437, Amer. Math. Soc.
\bibitem{CLL}
Carlen E. A., Lieb E. H., Loss M.
\emph{A Sharp Analog of Young's Inequality on $\S^N$ and Related Entropy Inequalities.} J. Geom. Anal. 14 (2004), issue 3, 487--520.

\bibitem{CCRLV}
    Carlen E. A., Carvalho M. C., Le Roux J., Loss M. and Villani C.:
    Entropy and Chaos in the Kac Model.
    \emph{Kinet. Relat. Models}, \textbf{3} (2010), no. 1,  85--122.

\bibitem{Carr}
    Carrapatoso K.:
    Quantitative and Qualitative Kac's Chaos on the Boltzmann Sphere. http://arxiv.org/abs/1205.1241.
    
\bibitem{C-E}
Cordero-Erausquin D.:
\emph{Some applications of mass transport to Gaussian type inequalities.} Arch. Rational Mech. Anal. 161 (2002), 257-269.

\bibitem{Einav1}
    Einav A.:
    On Villani's Conjecture Concerning Entropy Production for the Kac Master Equation.
    \emph{Kinet. Relat. Models}, \textbf{4} (2011), no. 2, 479--497.
    
\bibitem{Einav1.5}
    Einav A.:
    A Counter Example to Cercignani's Conjecture for the $d-$Dimensional Kac Model.
    \emph{J. Stat. Phys.}. \textbf{148} (2012), no. 6, 1076--1103. 
        
\bibitem{Einav2}
  Einav A.:
  \emph{A Few Ways to Destroy Entropic Chaoticity on Kac's Sphere}. Comm. Math. Sci. \textbf{12} (2014), No. 1, 41--60.
  
\bibitem{HM}
 Hauray M. and Mischler S.:
\emph{On Kac's Chaos and Related Problems.} HAL:http://hal.archives-ouvertes.fr/hal-00682782/.   
    
    
\bibitem{MM}
    Mischler S. and Mouhot C.:
   Kac's Program in Kinetic Theory.
   Invent. Math. \textbf{193} (2013), no. 1, 1--147
    
\bibitem{Vtransport1}
   Villani C.:
   \emph{Topics in Optimal Transportation},
   Graduate Studies in Mathematics, Vol. 58, American Mathematical Society, Providence, RI, 2003.

\bibitem{Vtransport2}
   Villani C.:
   \emph{Optimal Transport, Old and New},
   Grundlehren der Mathematischen Wissenschaften, Vol. 338, Springer London, 2009.
 
\bibitem{Villani}
    Villani C.:
    Cercignani's Conjecture is Sometimes True and Always Almost True.
    \emph{Comm. Math. Phys.}, \textbf{234} (2003), no. 3, 455--490. 
\end{thebibliography}
\end{document}